\documentclass[letterpaper, 11pt,  reqno]{amsart}

\usepackage{amsmath,amssymb,amscd,amsthm,amsxtra, esint}
\usepackage{mathabx}

\usepackage[margin=1.2in,marginparwidth=1.5cm, marginparsep=0.5cm]{geometry}

\usepackage[mathscr]{euscript}

\usepackage[implicit=true]{hyperref}

\usepackage{dsfont}

\allowdisplaybreaks[2]

\sloppy

\hfuzz  = 0.5cm 


\setlength{\pdfpagewidth}{8.50in}
\setlength{\pdfpageheight}{11.00in}

\usepackage{cancel}

\usepackage{color}

\definecolor{gr}{rgb}   {0.,   0.69,   0.23 }
\definecolor{bl}{rgb}   {0.,   0.5,   1. }
\definecolor{mg}{rgb}   {0.85,  0.,    0.85}
\definecolor{yl}{rgb}   {0.8,  0.7,   0.}
\definecolor{or}{rgb}  {0.7,0.2,0.2}

\newtheorem{theorem}{Theorem} [section]

\newtheorem{lemma}[theorem]{Lemma}
\newtheorem{proposition}[theorem]{Proposition}
\newtheorem{remark}[theorem]{Remark}

\newtheorem{definition}[theorem]{Definition}
\newtheorem{corollary}[theorem]{Corollary}


\DeclareMathOperator*{\supp}{supp}

\def\11{{\rm 1~\hspace{-1.4ex}l} }
\def\R{\mathbb R}

\def\Z{\mathbb Z}
\def\N{\mathbb N}

\def\T{\mathbb T}





\newcommand{\Q}{\mathbf{Q}}

\renewcommand{\l}{\ell}



\newcommand{\pa}{\partial}




\usepackage{tikz}

\usetikzlibrary{shapes.misc}
\usetikzlibrary{shapes.symbols}
\usetikzlibrary{shapes.geometric}
\tikzset{
	dot/.style={circle,fill=black,draw=black,inner sep=0pt,minimum size=0.5mm},
	>=stealth,
	}
\tikzset{
	ddot/.style={circle,fill=white,draw=black,inner sep=0pt,minimum size=0.8mm},
	>=stealth,
	}


\tikzset{decision/.style={ 
        draw,
        diamond,
        aspect=1.5
    }}

\tikzset{dia2/.style
={diamond,fill=white,draw=black,inner sep=0pt,minimum size=1mm},
	>=stealth,
	}

\tikzset{dia/.style
={star,fill=black,draw=black,inner sep=0pt,minimum size=1mm},
	>=stealth,
	}


\makeatletter
\def\DeclareSymbol#1#2#3{\expandafter\gdef\csname MH@symb@#1\endcsname{\tikz[baseline=#2,scale=0.15]{#3}}}
\def\<#1>{\csname MH@symb@#1\endcsname}
\makeatother

\DeclareSymbol{X}{-2.4}{\node[dot] {};}
\DeclareSymbol{1}{0}{\draw[white] (-.4,0) -- (.4,0); \draw (0,0)  -- (0,1.2) node[dot] {};}
\DeclareSymbol{2}{0}{\draw (-0.5,1.2) node[dot] {} -- (0,0) -- (0.5,1.2) node[dot] {};}

\DeclareSymbol{1}{-2.7}
 {
  \draw (0,0) node[dot]{};}

\DeclareSymbol{1'}{-2.7}
 {\draw (0,0) node[ddot]{};}

\DeclareSymbol{1''}{-2.7}
 {\draw (0,0) node[dia]{};}

\DeclareSymbol{3}{0}
 {\draw (0,0) node[dot]{} -- (0,1.2) node[ddot] {}; 
 \draw (-1.2,0) node[dot] {} -- (0,1.2)node[ddot] {} -- (1.2,0) node[dot] {};}

\DeclareSymbol{3'}{0}
 {\draw (0,0) node[dia]{} -- (0,1.2) node[ddot] {}; 
 \draw (-1.2,0) node[dia] {} -- (0,1.2)node[ddot] {} -- (1.2,0) node[dia] {};}

\DeclareSymbol{31}{-3}{
\draw (0,-1)node[dot] {} -- (0,0) node[ddot] {}-- (0.9, -1) node[dot] {}; 
\draw (0,-1)node[dot] {} -- (0,0) node[ddot] {}-- (-0.9, -1) node[dot] {}; 
\draw (0,0)node[ddot] {} -- (1.3,1) node[ddot] {}-- (1.3, 0) node[dot] {}; 
\draw  (1.3,1) node[ddot] {}-- (2.6, 0) node[dot] {}; 
}

\DeclareSymbol{32}{-3}{
\draw (0,-1)node[dot] {} -- (0,0) node[ddot] {}-- (0.9, -1) node[dot] {}; 
\draw (0,-1)node[dot] {} -- (0,0) node[ddot] {}-- (-0.9, -1) node[dot] {}; 
\draw (0,0)node[ddot] {} -- (0,1) node[ddot] {}-- (0.9, 0) node[dot] {}; 
\draw (0,0)node[ddot] {} -- (0,1) node[ddot] {}-- (-0.9, 0) node[dot] {}; 
}

\DeclareSymbol{33}{-3}{
\draw (2.6,-1)node[dot] {} -- (2.6,0) node[ddot] {}-- (3.5, -1) node[dot] {}; 
\draw (2.6,-1)node[dot] {} -- (2.6,0) node[ddot] {}-- (1.7, -1) node[dot] {}; 
\draw (0,0)node[dot] {} -- (1.3,1) node[ddot] {}-- (1.3, 0) node[dot] {}; 
\draw  (1.3,1) node[ddot] {}-- (2.6, 0) node[ddot] {}; 
}


\DeclareSymbol{3''}{0}
 {\draw (0,0) node[dia]{} -- (0,1.2) node[ddot] {}; 
 \draw (-1.2,0) node[dot] {} -- (0,1.2)node[ddot] {} -- (1.2,0) node[dia] {};}

\DeclareSymbol{3'''}{0}
 {\draw (0,0) node[dot]{} -- (0,1.2) node[ddot] {}; 
 \draw (-1.2,0) node[dot] {} -- (0,1.2)node[ddot] {} -- (1.2,0) node[dia] {};}

\DeclareSymbol{31'}{-3}{
\draw (0,-1.2)node[dia] {} -- (0,0) node[ddot] {}-- (0.9, -1.2) node[dia] {}; 
\draw (0,-1.2)node[dia] {} -- (0,0) node[ddot] {}-- (-0.9, -1.2) node[dia] {}; 
\draw (0,0)node[ddot] {} -- (1.3,1.2) node[ddot] {}-- (1.3, 0) node[dia] {}; 
\draw  (1.3,1.2) node[ddot] {}-- (2.6, 0) node[dia] {}; 
}

\DeclareSymbol{32'}{-3}{
\draw (0,-1.2)node[dia] {} -- (0,0) node[ddot] {}-- (0.9, -1.2) node[dia] {}; 
\draw (0,-1.2)node[dia] {} -- (0,0) node[ddot] {}-- (-0.9, -1.2) node[dia] {}; 
\draw (0,0)node[ddot] {} -- (0,1.2) node[ddot] {}-- (0.9, 0) node[dia] {}; 
\draw (0,0)node[ddot] {} -- (0,1.2) node[ddot] {}-- (-0.9, 0) node[dot] {}; 
}

\DeclareSymbol{33'}{-3}{
\draw (2.6,-1.2)node[dia] {} -- (2.6,0) node[ddot] {}-- (3.5, -1.2) node[dia] {}; 
\draw (2.6,-1.2)node[dia] {} -- (2.6,0) node[ddot] {}-- (1.7, -1.2) node[dia] {}; 
\draw (0,0)node[dot] {} -- (1.3,1.2) node[ddot] {}-- (1.3, 0) node[dot] {}; 
\draw  (1.3,1.2) node[ddot] {}-- (2.6, 0) node[ddot] {}; 
}

%
%
%
%
%
%

\newtheorem*{ackno}{Acknowledgments}

\numberwithin{equation}{section}
\numberwithin{theorem}{section}

\makeatletter
\@namedef{subjclassname@2020}{%
  \textup{2020} Mathematics Subject Classification}
\makeatother

\begin{document}
\baselineskip = 14pt

\title[Critical HNLS on $\mathbb{R}\times \mathbb{T}$]{Hyperbolic nonlinear Schr\"odinger equations on $\mathbb{R}\times \mathbb{T}$}

\author[E.~Ba\c{s}ako\u{g}lu, C. Sun, N. Tzvetkov and Y.~Wang]
{Engin Ba\c{s}ako\u{g}lu, Chenmin Sun, Nikolay Tzvetkov and Yuzhao Wang}

\address{Engin Ba\c{s}ako\u{g}lu\\Institute of Mathematical Sciences\\ ShanghaiTech University\\ Shanghai\\ 201210\\ China}
\email{ebasakoglu@shanghaitech.edu.cn}

\address{Chenmin Sun\\ CNRS\\ Université Paris-Est Créteil\\ Laboratoire d’Analyse et de Mathématiques appliquées\\
UMR 8050 du CNRS\\ 94010 Créteil cedex\\ France}
\email{chenmin.sun@cnrs.fr}

\address{Nikolay Tzvetkov\\ Ecole Normale Supérieure de Lyon\\ UMPA\\ UMR CNRS-ENSL 5669\\ 46\\ allée d’Italie\\ 69364-Lyon Cedex 07\\ France}
\email{nikolay.tzvetkov@ens-lyon.fr}

\address{Yuzhao Wang\\School of Mathematics\\ University of Birmingham\\ Watson Building\\ Edgbaston\\ Birmingham \\B15 2TT\\ United Kingdom}
\email{y.wang.14@bham.ac.uk}

\subjclass[2020]{35A01, 35Q55}

\keywords{Hyperbolic nonlinear Schrödinger equations, critical Sobolev spaces, local well-posedness, global well-posedness}

\dedicatory{Dedicated to Professor Yoshio Tsutsumi   on the
occasion of his seventieth birthday}

\begin{abstract}
In this paper, we consider the hyperbolic nonlinear Schr\"odinger equations (HNLS) on $\mathbb{R}\times\mathbb{T}$. We obtain the sharp local well-posedness up to the critical regularity for cubic nonlinearity and in critical spaces for higher odd nonlinearities. Moreover, when the initial data is small, we prove the global existence and scattering for the solutions to HNLS with higher nonlinearities (except the cubic one) in critical Sobolev spaces. The main ingredient of the proof is the sharp up to the endpoint local/global-in-time Strichartz estimates.
\end{abstract}



%
\maketitle

\vspace{-3mm}

\tableofcontents

\section{Introduction}
We consider the hyperbolic nonlinear Schr\"odinger equations (HNLS) on $\mathcal{M}:=\mathbb{R}\times\mathbb{T}$, $k\in\mathbb{Z}_+$:
\begin{equation}
\label{eq:HNLS}
\begin{cases}
i\pa_t u + \Box u =\pm|u|^{2k}u,\\
u(t,\mathbf{x})|_{t=0}=u_0(\mathbf{x}),
\end{cases}
\quad (t,\mathbf{x})\in\mathbb{R}\times\mathcal{M},
\end{equation}
where $\mathbf{x}=(x,y)\in \mathbb{R}\times\mathbb{T}$, and $\Box = \Box_{\mathbf{x}}= \partial_x^2-\partial_y^2$. HNLS in \eqref{eq:HNLS} arises in the study of gravity water waves, \cite{Totz1, Totz2}; it also appears naturally in the study of the hyperbolic-elliptic Davey-Stewartson system; see, for example, \cite{GS90, LP93, G13}. The mathematical theory for HNLS was initiated by studies \cite{GS1, GS2, GS90, LP93}, also over the last decade, the Cauchy problem \eqref{eq:HNLS} has been studied extensively in different settings; we refer to \cite{GT12,G13,YW13_2,MT15, Totz3, BD17, DMPS18,T20,BOW25,BW25}. We also mention the recent review \cite{S24}, where Saut and Wang proposed some conjectures and open questions for the HNLS equation.

When the problem \eqref{eq:HNLS} is posed on $\mathbb{R}^2$, the small data global well-posedness is known to hold in the critical spaces $H^{s_{2,k}}(\mathbb{R}^2)$, where
\begin{align}\label{criticalexp}
    s_{2,k}:=1-\frac{1}{k}.
\end{align}
The key ingredient of the proof of this fact is the Strichartz estimate
 \begin{align}\label{strichartzeuclid}
   \Vert e^{it\partial_x\partial_y}f\Vert_{L^q_tL^p_{x,y}(\mathbb{R}\times \mathbb{R}^2)}\lesssim \Vert f\Vert_{L^2(\mathbb{R}^2)} 
 \end{align}
for $(p,q)$ satisfying the conditions
\begin{align}\label{strichartzeuclidcond}
    2\leq p,q\leq \infty,\quad \frac{2}{q}+\frac{2}{p}=1,\quad q\neq 2.
\end{align}
The inequality \eqref{strichartzeuclid}-\eqref{strichartzeuclidcond} comes from the $TT^*$ method, Hardy-Littlewood-Sobolev inequality, and the dispersive inequality
\begin{align*}
  \Vert e^{it\partial_x\partial_y}f\Vert_{L^{\infty}_{x,y}}\lesssim \frac{1}{|t|}\Vert f\Vert_{L^1(\mathbb{R}^2)}.    
\end{align*}
See \cite{KT98} for a proof. Regarding the long-term dynamics on $\mathbb{R}^2$, Dodson-Marzuola-Pausader-Spirn \cite{DMPS18} proved the profile decomposition for the hyperbolic Schr\"odinger equation in both the mass-critical and the mass-supercritical cases, where the key element in their argument is the improved Strichartz estimate for the linear group $e^{it\partial_x\partial_y}$. As an application of the profile decomposition, they derived the existence of a minimal blow-up solution for the mass-critical problem. Furthermore, in \cite{Totz3}, Totz discussed an approach to global existence for sufficiently regular solutions to the Cauchy problem \eqref{eq:HNLS} posed on $\mathbb{R}^2$. 

Note that the estimate \eqref{strichartzeuclid}-\eqref{strichartzeuclidcond} is certainly the same for the semi-group $e^{it\Box}$, yet, when $(x,y)\in \R\times\T\,\,\text{or}\,\,\T\times\T$, the behavior of two such hyperbolic flows is quite different in terms of Strichartz estimates, which we will discuss later in this section. As mentioned above, by standard arguments, using \eqref{strichartzeuclid} we can show that the HNLS problem \eqref{eq:HNLS} is globally well-posed for small initial data in $H^{s_{2,k}}(\mathbb{R}^2)$, where $s_{2,k}$ is as in \eqref{criticalexp}. In this paper, we extend this result to small data global well-posedness in critical spaces $H^{s_{2,k}}(\mathbb{R}\times \mathbb{T})$, even if we do not have \eqref{strichartzeuclid} at our disposal in the $\mathbb{R}\times \mathbb{T}$ setting. Our main result in this paper reads as follows:
\begin{theorem}\label{maintm}
\begin{enumerate}
\mbox{}
        \item[$(i)$]$($Local well-posedness$)$ For $k\geq2$ fixed, the Cauchy problem \eqref{eq:HNLS} with data $u_0\in H^{1-\frac{1}{k}}(\mathbb{R}\times\mathbb{T})$ is locally well-posed. Moreover, when $k=1$, given $s>0$, the Cauchy problem \eqref{eq:HNLS} is locally well-posed for data $u_0\in H^{s}(\mathbb{R}\times\mathbb{T})$. 
\item[$(ii)$] $($Small data global well-posedness$)$ Let $k\geq2$ be fixed. Then, there exists $\epsilon>0$ such that for all initial data $u_0$ with $\Vert u_0\Vert_{H^{1-\frac{1}{k}}(\mathbb{R}\times \mathbb{T})}\leq \epsilon$, the solution $u$ to the Cauchy problem \eqref{eq:HNLS} is unique and exists globally-in-time. Moreover, for every such small initial data $u_0$, there exists $u_{\pm}$ such that
\begin{align*}
    \lim_{t\rightarrow\pm\infty}\Vert u-e^{it\Box}u_{\pm}\Vert_{H^{1-\frac{1}{k}}}=0.
\end{align*}
\end{enumerate}
\end{theorem}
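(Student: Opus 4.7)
\medskip
\noindent\textbf{Overall strategy.} The proof reduces to two ingredients: (a) sharp Strichartz-type estimates (local-in-time for part (i), global-in-time for part (ii)) for the hyperbolic linear flow $e^{it\Box}$ on $\R\times\T$, at scaling-critical level; and (b) a contraction-mapping fixed point for the Duhamel operator
$\Phi(u)(t)=e^{it\Box}u_0\mp i\int_0^t e^{i(t-\tau)\Box}|u|^{2k}u\,d\tau$
in a resolution space adapted to the equation. The subcritical cubic case ($k=1$, $s>0$) is handled by $X^{s,b}$-spaces with $b$ slightly above $1/2$, which can absorb any $\eps$-loss in the Strichartz bounds; the critical cases ($k\ge 2$) require the atomic $U^p_\Box$--$V^p_\Box$ framework of Koch--Tataru, as developed for dispersive PDE on product spaces by Herr--Tataru--Tzvetkov and Ionescu--Pausader.

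\medskip
\noindent\textbf{Step 1: Linear Strichartz.} Since $\Box=\pa_x^2-\pa_y^2$ decouples in $(x,y)$, the $y$-Fourier expansion $u(t,x,y)=\sum_{n\in\Z} u_n(t,x)e^{iny}$ reduces each mode to the 1D Schr\"odinger equation $i\pa_t u_n+(\pa_x^2+n^2)u_n=0$, so that $e^{it\Box}u=\sum_n e^{itn^2}(e^{it\pa_x^2}u_n)e^{iny}$. Using the $\R$-Schr\"odinger Strichartz on each mode (admissibility $\tfrac{2}{q}+\tfrac{1}{p}=\tfrac{1}{2}$), Minkowski, Littlewood--Paley in both variables, and Bernstein on $\T$, one obtains frequency-localized mixed-norm estimates of the form
\[
\|P_N e^{it\Box}u\|_{L^q_t L^p_{x,y}(\R\times\cal{M})}\;\lesssim\; N^{\sigma(p,q)}\,\|P_N u\|_{L^2},
\]
for appropriate admissible $(p,q)$, with the scaling exponent matching $s_{2,k}=1-1/k$. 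To push to (or arbitrarily close to) the endpoint one combines Hardy--Littlewood--Sobolev in the $\R$-direction with the discrete dispersion in $n$, mimicking the $TT^\ast$ argument behind \eqref{strichartzeuclid}--\eqref{strichartzeuclidcond}. All of these bounds are global in $t$, which will be essential for part (ii).

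\medskip
\noindent\textbf{Step 2: Resolution spaces and multilinear estimates.} Define the critical norms $Y^s:=V^2_\Box H^s$ and $N^s$ (a dual $U^2$-type space) so that the linear flow embeds into $Y^s$ and $\|\int_0^t e^{i(t-\tau)\Box}F\,d\tau\|_{Y^s}\lesssim\|F\|_{N^s}$. The atomic transference principle then upgrades the frequency-localized Strichartz of Step~1 from $e^{it\Box}$ to all $Y^s$-elements, which is the engine powering a $(2k+1)$-linear estimate
\[
\bigl\|P_N\bigl(\tilde u_1 \tilde u_2\cdots \tilde u_{2k+1}\bigr)\bigr\|_{N^{s_{2,k}}}\;\lesssim\; \prod_{j=1}^{2k+1}\|u_j\|_{Y^{s_{2,k}}},
\]
where $\tilde u_j\in\{u_j,\bar u_j\}$. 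To prove it, Littlewood--Paley decompose each factor into dyadic pieces of sizes $N_1\ge N_2\ge\cdots\ge N_{2k+1}$, apply H\"older with the Strichartz bounds of Step~1 to the two highest factors and cheap Bernstein/$L^\infty$ control on the rest, and sum dyadically using a small gain from $N_2/N_1$.

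\medskip
\noindent\textbf{Step 3: Fixed point and scattering.} With these multilinear estimates, a standard Picard iteration closes the contraction for $\Phi$ in a ball of $Y^{s_{2,k}}$ around the linear evolution, yielding local well-posedness at critical regularity in part~(i). For part~(ii), the same contraction run globally in time gives, for $\|u_0\|_{H^{s_{2,k}}}\le\eps$ small, a unique global solution with $\|u\|_{Y^{s_{2,k}}(\R)}\lesssim\eps$; the embedding $Y^{s_{2,k}}\hookrightarrow C_t H^{s_{2,k}}$ together with a Cauchy argument on $e^{-it\Box}u(t)$ as $t\to\pm\infty$ then supplies scattering states $u_\pm$.

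\medskip
\noindent\textbf{Main obstacle.} The sharpest difficulty is Step~1: the clean 2D Strichartz inequality \eqref{strichartzeuclid} is unavailable on $\R\times\T$ because the $n$-spectrum is discrete, so one must work mode-by-mode and carefully balance losses from $\T$-Bernstein against gains from $\R$-dispersion, all while reaching scaling-critical exponents up to (or arbitrarily near) the endpoint. Once the frequency-localized Strichartz is in hand at the correct scaling, the remaining multilinear and contraction arguments are essentially structural and follow the well-established $U^p$--$V^p$ paradigm.
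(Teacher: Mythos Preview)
Your Steps 2 and 3 are structurally sound and align with the paper's argument (atomic $X^s/Y^s$ spaces, transference, multilinear estimate, contraction, scattering via a Cauchy sequence for $e^{-it\Box}u(t)$). The genuine gap is Step~1.

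The mode-by-mode approach you describe --- 1D Strichartz on each $y$-Fourier mode, then Minkowski and Bernstein on $\T$ --- produces
\[
\|P_N e^{it\Box}u\|_{L^q_tL^p_{x,y}}\;\lesssim\; N^{\frac12-\frac1p}\|P_Nu\|_{L^2}\qquad\text{for 1D-admissible } \tfrac{2}{q}+\tfrac{1}{p}=\tfrac{1}{2},
\]
because Bernstein on $\T$ costs $N^{1/2-1/p}$ and the 1D flow on each mode contributes nothing further. This exponent is \emph{not} the scaling-critical $1-\tfrac{4}{p}$; it is precisely the Tzvetkov--Visciglia scheme discussed in Remark~1.2 of the paper, which yields solutions in $H^{\frac12-\frac1k,\frac12+\eps}_{x,y}$, i.e.\ $\eps$ above the critical total regularity $s_{2,k}=1-\tfrac1k$. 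Your handwave ``combine HLS in the $\R$-direction with the discrete dispersion in $n$, mimicking the $TT^*$ argument behind \eqref{strichartzeuclid}'' does not repair this: the full-space $TT^*$ uses the dispersive bound $|t|^{-1}$, whereas on $\R\times\T$ the kernel only satisfies $|K_N(t,\mathbf{x})|\lesssim N|t|^{-1/2}$ globally in $t$, so a naive $TT^*$ lands you back at the subcritical exponent. No purely mode-by-mode argument can see the interaction between different $n$'s that is responsible for the extra smoothing.

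What the paper actually does for Step~1 is genuinely two-dimensional. First, the almost-sharp local $L^4$ estimate $\|e^{it\Box}P_{\le N}\phi\|_{L^4([0,1]\times\mathcal{M})}\lesssim N^\eps\|\phi\|_{L^2}$ is proved by a counting/measure argument: after the standard $L^4$-to-bilinear reduction one must bound ${\rm mes}\{(\xi_1,n_1):|\tau+\xi_1^2-(n-n_1)^2|\le C,\ |n_1|\sim N\}$, and a direct hyperbola-counting gives $O(\log N)$. Second, the $\eps$-loss is removed for $p>4$ by exploiting the short-time 2D-type dispersion $|K_N(t,\mathbf{x})|\lesssim|t|^{-1}$ for $|t|\le N^{-1}$ (from \cite{KPV91}), which powers a Keel--Tao/Killip--Vi\c{s}an style interpolation on the interval $I_N=[0,N^{-1}]$. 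Third, the global-in-time $\ell^qL^p$ estimate \eqref{Stricremove} is obtained via the Barron--Christ--Pausader Schur-test argument for the endpoint $\ell^8L^4$ (again reducing to the same hyperbolic counting bound), followed by interpolation and $\eps$-removal. Without these ingredients, your contraction in Step~3 cannot close at $H^{1-1/k}$, and for the cubic case $k=1$ the mode-by-mode bound $N^{1/4}$ at $L^4$ is far from the $N^\eps$ needed for local well-posedness at $s>0$.
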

\begin{remark}
Here we compare Theorem \ref{maintm} with the result \cite{TV12}. In \cite{TV12}, the third author and Visciglia studied global well-posedness and scattering for the cubic nonlinear Schr\"odinger equation on $\mathbb{R}^n\times M$, where $n\geq 2$ and $M$ is a compact Riemannian manifold of dimension $d\geq 1$. In particular, concerning the even dimensional case $($for $n\geq 2$ even$)$, given $\epsilon>0$ and small initial data, they proved the existence of global solutions in spaces $L^{\infty}_t\mathscr{H}_{x,y}^{\frac{n-2}{2},\frac{d}{2}+\epsilon}\cap X_{\epsilon}$ where
\begin{equation*}
\begin{aligned}
\Vert f\Vert_{\mathscr{H}_{x,y}^{s,r}}&=\sum_{|\alpha|\leq s}\Vert \partial_x^{\alpha}(1-\Delta)^{\frac{r}{2}}f\Vert_{L^2(\mathbb{R}^n_x\times M_y)}\\ \Vert f\Vert_{X_{\epsilon}}&=\sum_{s=0}^{\frac{n-2}{2}}\sum_{|\alpha|=s}\Vert \partial_x^{\alpha}(1-\Delta)^{\frac{1}{2}(\frac{d}{2}+\epsilon)}f\Vert_{L^4_tL_x^{\frac{2n}{1+2s}}L_y^2}.
\end{aligned}
\end{equation*}
The analysis performed in \cite{TV12} is based on the dimension $n\geq 2$ being odd or even, nevertheless using the strategy described in \cite{TV12}, an analogous result can also be obtained when $n=1$. In the following, we discuss this briefly for the hyperbolic NLS \eqref{eq:HNLS} with $k\geq 2$. First note that arguing as in \cite[Proposition 2.1]{TV12} one can obtain:
\begin{align*}
    \Vert e^{it\Box}f\Vert_{L^p_tL^q_xL^2_y}+ \Big\Vert \int_0^te^{i(t-\tau)\Box}F(\tau,x,y)d\tau\Big\Vert_{L^p_tL^q_xL^2_y}\lesssim \Vert f \Vert_{L^2_{x,y}}+\Vert F \Vert_{L^{p^{\prime}}_t L^{q^{\prime}}_x L^2_y}
\end{align*}
where $(t,x,y)\in\mathbb{R}\times\mathbb{R}\times \mathbb{T}$,\begin{align*}
  \frac{2}{p}+\frac{1}{q}=\frac{1}{2},\quad 4\leq p\leq \infty.  \end{align*}
Let $\widetilde{u}_j\in \{u_j, \overline{u}_j\}$. Then, given $\varepsilon>0$, implementing the Strichartz estimate above with $(p,q)=(2k+2,\frac{2k+2}{k-1})$, $k\geq 2$, leads to
\begin{equation}\label{estliketv12}
\begin{aligned}
    &\Vert e^{it\Box}f\Vert_{L^{2k+2}_tW^{\frac{1}{2}-\frac{1}{k},\frac{2k+2}{k-1}}_xH^{\frac{1}{2}+\varepsilon}_y}+ \Big\Vert \int_0^te^{i(t-\tau)\Box}\prod^{2k+1}_{j=1}\widetilde{u}_jd\tau\Big\Vert_{L^{2k+2}_tW^{\frac{1}{2}-\frac{1}{k},\frac{2k+2}{k-1}}_xH^{\frac{1}{2}+\varepsilon}_y}\\&\lesssim \Vert f \Vert_{H^{\frac{1}{2}-\frac{1}{k},\frac{1}{2}+\varepsilon}_{x,y}}+\Big\Vert \prod^{2k+1}_{j=1}\widetilde{u}_j \Big\Vert_{L^{\frac{2k+2}{2k+1}}_tW^{\frac{1}{2}-\frac{1}{k},\frac{2k+2}{k+3}}_x H^{\frac{1}{2}+\varepsilon}_y}\\&\lesssim \Vert f \Vert_{H^{\frac{1}{2}-\frac{1}{k},\frac{1}{2}+\varepsilon}_{x,y}}+\Big\Vert\langle \nabla_x\rangle^{\frac{1}{2}-\frac{1}{k}}\prod^{2k+1}_{j=1} \Vert\langle\nabla_y\rangle^{\frac{1}{2}+\varepsilon}u_j\Vert_{L^2_y}\Big\Vert_{L^{\frac{2k+2}{2k+1}}_{t}L^{\frac{2k+2}{k+3}}_x}\\&\lesssim\Vert f \Vert_{H^{\frac{1}{2}-\frac{1}{k},\frac{1}{2}+\varepsilon}_{x,y}}+\Bigg\Vert\sum_{j=1}^{2k+1}\Big\Vert\langle \nabla_x\rangle^{\frac{1}{2}-\frac{1}{k}}\Vert\langle\nabla_y\rangle^{\frac{1}{2}+\varepsilon}u_j\Vert_{L^2_y}\Big\Vert_{L^{\frac{2k+2}{k-1}}_x}\prod_{\substack{l=1\\l\neq j}}^{2k+1} \Big\Vert\Vert\langle\nabla_y\rangle^{\frac{1}{2}+\varepsilon}u_l\Vert_{L^2_y}\Big\Vert_{L^{k(k+1)}_x}\Bigg\Vert_{L^{\frac{2k+2}{2k+1}}_{t}}\\&\lesssim\Vert f \Vert_{H^{\frac{1}{2}-\frac{1}{k},\frac{1}{2}+\varepsilon}_{x,y}}+\sum_{j=1}^{2k+1}\Big\Vert \Vert u_j\Vert_{W^{\frac{1}{2}-\frac{1}{k},\frac{2k+2}{k-1}}_xH^{\frac{1}{2}+\varepsilon}_y}\prod_{\substack{l=1\\l\neq j}}^{2k+1}\Vert u_l\Vert_{W^{\frac{1}{2}-\frac{1}{k},\frac{2k+2}{k-1}}_xH^{\frac{1}{2}+\varepsilon}_y}\Big\Vert_{L^{\frac{2k+2}{2k+1}}_{t}}\\&\lesssim\Vert f \Vert_{H^{\frac{1}{2}-\frac{1}{k},\frac{1}{2}+\varepsilon}_{x,y}}+\sum_{j=1}^{2k+1}\Big[\Vert u_j\Vert_{L^{2k+2}_tW^{\frac{1}{2}-\frac{1}{k},\frac{2k+2}{k-1}}_xH^{\frac{1}{2}+\varepsilon}_y}\prod_{\substack{l=1\\l\neq j}}^{2k+1}\Vert u_l\Vert_{L^{2k+2}_tW^{\frac{1}{2}-\frac{1}{k},\frac{2k+2}{k-1}}_xH^{\frac{1}{2}+\varepsilon}_y}\Big]
\end{aligned}
\end{equation}
where we use the algebra property of $H^{s}(\mathbb{T})$ for $s>\frac{1}{2}$, fractional Leibniz rule for the $L^{\frac{2k+2}{k+3}}_x$-norm, and the embedding $W_x^{\frac{1}{2}-\frac{1}{k},\frac{2k+2}{k-1}}(\mathbb{R})\hookrightarrow L_x^{k(k+1)}(\mathbb{R})$. See Proposition \ref{keyproposition} for comparison with \eqref{estliketv12}. Utilizing the estimate \eqref{estliketv12} in a fixed point argument, one could prove the existence of a global solution $u(t,x,y)$ $($with small initial data$)$ in the class $$L^{\infty}_tH^{\frac{1}{2}-\frac{1}{k},\frac{1}{2}+\varepsilon}_{x,y}\cap L^{2k+2}_tW^{\frac{1}{2}-\frac{1}{k},\frac{2k+2}{k-1}}_xH^{\frac{1}{2}+\varepsilon}_y.$$
Therefore, we infer that the total number of derivatives in Theorem \ref{maintm} is $\varepsilon$ better than the result due to \cite{TV12}; however the discussion above demonstrates that the method in \cite{TV12} yields a better result concerning the $x$ derivatives.
\end{remark}
\begin{remark}
With the method of the current paper we cannot establish a global existence of the solutions for cubic HNLS \eqref{eq:HNLS}. However, by performing the modified scattering argument in \cite{HPTV15}, it is possible to obtain small data global well-posedness for cubic HNLS \eqref{eq:HNLS} on $\mathbb{R}\times\mathbb{T}$, which will be addressed in a further study. Moreover, we mention the recent work by Corcho and Mallqui \cite{CM25}, where they showed in particular that the cubic hyperbolic NLS given by the higher order operator $\partial_x^2+(-\partial_y^2)^{\alpha}$, $(x,y)\in\mathbb{R}\times\mathbb{T}$, with $\alpha>1$ is globally well-posed for small data in $L^2(\mathbb{R}\times\mathbb{T})$. The argument in \cite{CM25} depends essentially on the conservation of mass and crucial bilinear estimates that are proven in the sense of \cite{TT}.
\end{remark}
The nonlinear Schr\"odinger equations (NLS) have an extensive literature. For instance, concerning the $\mathbb{R}^d$ case, the construction of global $L^2$ solutions of NLS dates back to 1980's, see the classical result due to Tsutsumi \cite{Ts87} and the references therein. Moreover, the study of NLS on compact or partially compact manifolds has been ongoing for decades; see, for example, \cite{B93, BGT04, BGT051, BGT052, IP120, IP12,Herr13, HTT14}. In this direction, the first critical well-posedness result for NLS posed on a compact manifold was established by Herr, Tataru and the third author \cite{HTT11}, where they consider the energy critical problem on $\mathbb{T}^3$, and extend Bourgain's three dimensional subcritical result \cite{B93}. Later, the fourth author \cite{YW13} extended Bourgain's subcritical results in \cite{B93} to critical spaces and thus generalized the result in \cite{HTT11}. As for the semi-periodic case, Herr, Tataru and the third author \cite{HTT14} studied cubic NLS and obtained global well-posedness for small data in $H^s(M)$, $s\geq 1$, where $M=\mathbb{R}^m\times\mathbb{T}^n$, $0\leq m \leq 4$, $m+n=4$. We also refer to the works of Ionescu-Pausader \cite{IP120} and Hani-Pausader \cite{HP14}, in which they studied the long-time dynamics of the cubic and quintic NLS for data of arbitrary size in $H^1(M)$, $M=\mathbb{R}\times\mathbb{T}^3$, $\mathbb{R}\times\mathbb{T}^2$ respectively. 

We now state the local/global Strichartz estimates, which will be the main tool to prove Theorem \ref{maintm}. \begin{proposition}\label{epsilonremovalprop}
 For any $N\geq 1$ and $p> 4$, we have 
\begin{align}\label{BDstrcest2}
\Vert e^{it\Box}P_{\leq N}\phi\Vert_{L^p_{t,\mathbf{x}}([0,1]\times \mathcal{M})}\lesssim N^{1-\frac{4}{p}}\Vert P_{\leq N}\phi\Vert_{L^2(\mathcal{M})}.
\end{align}
\end{proposition}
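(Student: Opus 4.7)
The key input is a short-time dispersive estimate on $\M$; once it is in hand, the proposition follows from the Keel--Tao abstract Strichartz theorem together with a Littlewood--Paley Bernstein absorption.

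First, I would establish
\begin{equation*}
\|e^{it\Box}\phi\|_{L^\infty(\M)} \les |t|^{-1}\|\phi\|_{L^1(\M)}, \qquad 0 < |t| \leq 1.
\end{equation*}
The kernel of $e^{it\Box}$ factorizes as $K(t,x,y) = K_{\R}(t,x)\,K_{\T}(t,y)$, where $|K_\R(t,x)| = (4\pi|t|)^{-1/2}$ from the standard 1D Schr\"odinger kernel. Poisson summation rewrites the theta sum $K_\T(t,y) = \tfrac{1}{2\pi}\sum_{n\in\Z}e^{itn^2 + iny}$ as $(-4\pi it)^{-1/2}\sum_k e^{-i(y - 2\pi k)^2/(4t)}$. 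For $t \in (0,1]$ and $y \in \T$ only $O(1)$ of the Gaussian-like summands contribute meaningfully, giving $|K_\T(t,y)| \les |t|^{-1/2}$ and hence $|K(t,x,y)| \les |t|^{-1}$.

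Second, feeding this dispersive bound together with $L^2$-conservation into the Keel--Tao $TT^*$ machinery delivers the mixed-norm Strichartz estimate
\begin{equation*}
\|e^{it\Box}\phi\|_{L^p_t L^q_\mathbf{x}([-1,1]\times\M)} \les \|\phi\|_{L^2(\M)}
\end{equation*}
for every admissible pair $\tfrac{2}{p} + \tfrac{2}{q} = 1$ with $p > 2$. Given $p > 4$, I select the (non-diagonal) admissible pair with $q = \tfrac{2p}{p-2} < p$.

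Third, since $e^{it\Box}$ preserves the Littlewood--Paley support, the 2D Bernstein inequality on $\M$ (applied coordinatewise in $x$ and $y$ for frequency $\leq N$) gives, pointwise in $t$,
\begin{equation*}
\|e^{it\Box}P_{\leq N}\phi(t,\cdot)\|_{L^p_\mathbf{x}(\M)} \les N^{\frac{2}{q}-\frac{2}{p}}\,\|e^{it\Box}P_{\leq N}\phi(t,\cdot)\|_{L^q_\mathbf{x}(\M)}.
\end{equation*}
Taking the $L^p_t$ norm over $[0,1]$ and combining with the Strichartz estimate above, using $\tfrac{2}{q} - \tfrac{2}{p} = 1 - \tfrac{4}{p}$, one obtains the stated bound $N^{1-4/p}\|P_{\leq N}\phi\|_{L^2(\M)}$.

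The main obstacle is the dispersive estimate on $\M$: while the Euclidean factor $K_\R$ is immediate, the theta sum $K_\T(t,y)$ enjoys the $|t|^{-1/2}$ decay only on intervals avoiding the recurrence times $t \in 2\pi\Z$. This is precisely why the Strichartz estimate is local-in-time and why the estimate is stated on $[0,1]$ rather than globally. Once the dispersive bound is secured on a short interval, the remaining two steps (Keel--Tao Strichartz and Bernstein) are entirely standard.
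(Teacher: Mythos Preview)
Your argument has a genuine gap at the first step: the dispersive estimate $\|e^{it\Box}\phi\|_{L^\infty(\M)}\les |t|^{-1}\|\phi\|_{L^1(\M)}$ is \emph{false} for $t\in(0,1]$. After Poisson summation the periodic factor becomes
\[
K_\T(t,y)=(-4\pi it)^{-1/2}\sum_{k\in\Z}e^{-i(y-2\pi k)^2/(4t)},
\]
and every summand has modulus $|t|^{-1/2}$; there is no decay in $k$ whatsoever, so the claim that ``only $O(1)$ summands contribute meaningfully'' is unjustified. In fact the theta sum $\sum_n e^{itn^2+iny}$ fails to be a bounded function at every rational time $t\in 2\pi\mathbb{Q}$, not merely at $t\in 2\pi\Z$: at $t=2\pi a/q$ with $(a,q)=1$ the truncated sum $\sum_{|n|\le N}e^{itn^2}$ is a Gauss sum of size $\sim N/\sqrt{q}$, which for small $q$ is far larger than $|t|^{-1/2}$. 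These arithmetic recurrences are dense in $[0,1]$ and cannot be avoided by restricting to a subinterval. Had your dispersive bound been true, Strichartz on $\T$ would be no harder than on $\R$, contrary to Bourgain's results.

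What \emph{is} true---and this is what the paper uses (Lemma~\ref{kerestlem}, citing \cite{KPV91,BGT04})---is that the \emph{frequency-truncated} periodic kernel satisfies $|K_{NS}(t,y)|\les |t|^{-1/2}$ only for $|t|\le N^{-1}$. Feeding this into Keel--Tao on intervals of length $N^{-1}$ and summing naively over the $\sim N$ such intervals in $[0,1]$ loses an additional $N^{1/p}$, giving $N^{1-3/p}$ rather than the sharp $N^{1-4/p}$. The paper therefore proceeds differently: it first proves an $L^4$ estimate with $N^\epsilon$ loss by a direct counting argument (Theorem~\ref{mainthmL4est}), and then removes the $\epsilon$ for $p>4$ via a level-set decomposition combined with a $TT^*$ argument that exploits the two-scale kernel bound $|K_N(t,\mathbf{x})|\les \min\{N|t|^{-1/2},\,|t|^{-1}\mathds{1}_{|t|\le N^{-1}}\}$ (Lemmas~\ref{kerestlem} and~\ref{kernlestlemma}). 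Your Bernstein step at the end is fine, but it cannot repair the missing dispersive input.
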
 
\begin{remark}The proof of Proposition \ref{epsilonremovalprop} simplifies the $\epsilon$-removal argument of Killip and Vişan \cite{KV14} in the case of the semi-periodic domain. The argument in \cite{KV14} is based on the splitting of the kernel $($associated with the linear propagator$)$ using major arcs
\begin{align}\label{majorarc}
\mathcal{T}:=\{t\in[0,1]: qN^2\big|t-a/q\big|\leq N^{2\sigma}\,\,\text{for some}\,\,q\leq N^{2\sigma},\,\text{and}\,(a,q)=1\},   
\end{align}and kernel estimate due to Weyl's method. In our situation, we introduce the short time intervals $I_N$ $($depending on dyadics $N)$ instead of the major arc $\mathcal{T}$; and complete the proof of Proposition \ref{epsilonremovalprop} by an elementary argument that replaces the Weyl sums estimate. In particular, taking advantage of dispersive decay separately in both periodic $($thanks to \cite[Theorem $5.3$]{KPV91}$)$ and the real line directions, we provide a simple proof of Strichartz type estimate for the kernel over $I_N$, whereas the proof for analogous estimate \cite[Proposition $2.3$]{KV14} is much more involved. See Subsection \ref{epsilonremoval} for the details.\end{remark}

We upgrade the local estimates \eqref{BDstrcest2} to a result of global Strichartz estimates by which we prove the global existence assertion of Theorem \ref{maintm}.
\begin{proposition}\label{removeglobalstric}
Let $p>4$ and $q=\frac{4p}{p-2}$. Then for all $N\geq 1$ and $\phi\in L^2(\mathbb{R}\times\mathbb{T})$, we have 
\begin{equation}
\begin{split}\label{Stricremove}
\Vert e^{it\Box} P_{\leq N}\phi\Vert_{\ell^qL^{p}(\mathbb{R},L^{p}(\mathbb{R}\times\mathbb{T}))}&\lesssim  N^{1-\frac{4}{p}}\Vert P_{\leq N}\phi\Vert_{L^2(\mathbb{R}\times\mathbb{T})}. 
\end{split}
\end{equation}
\end{proposition}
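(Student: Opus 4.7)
The plan is to deduce the global estimate \eqref{Stricremove} from the local Strichartz \eqref{BDstrcest2} via a $TT^{*}$ argument, with the long-time scales controlled by a dispersive bound in the $\mathbb{R}$-direction. By duality and $TT^{*}$, the estimate \eqref{Stricremove} is equivalent to the bilinear bound
\begin{equation*}
\left|\iint_{\mathbb{R}^{2}}\bigl\langle e^{i(t-s)\Box}P_{\leq N}^{2}F(s),\,G(t)\bigr\rangle_{L^{2}(\mathcal{M})}\,ds\,dt\right|\lesssim N^{2-\frac{8}{p}}\,\|F\|_{\ell^{q'}L^{p'}}\|G\|_{\ell^{q'}L^{p'}}.
\end{equation*}
Partitioning $\mathbb{R}$ into the unit intervals $I_{k}=[k,k+1)$ and decomposing the integral as $\sum_{k,\ell}\iint_{I_{k}\times I_{\ell}}$, we handle the near-diagonal part $|k-\ell|\leq 1$ and the off-diagonal part $|k-\ell|\geq 2$ separately.

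For the near-diagonal part, time-translation invariance together with the $TT^{*}$ dualization of \eqref{BDstrcest2} yields, for every pair $(k,\ell)$ with $|k-\ell|\leq 1$,
\begin{equation*}
\left|\iint_{I_{k}\times I_{\ell}}\langle\cdots\rangle\right|\lesssim N^{2-\frac{8}{p}}\|F\|_{L^{p'}(I_{\ell}\times\mathcal{M})}\|G\|_{L^{p'}(I_{k}\times\mathcal{M})}.
\end{equation*}
Summing in $k$ (only three nonzero shifts $j=k-\ell$) and using the embedding $\ell^{q'}\hookrightarrow\ell^{2}$, valid since $q\geq 2$, controls this piece by $N^{2-8/p}\|F\|_{\ell^{q'}L^{p'}}\|G\|_{\ell^{q'}L^{p'}}$.

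For the off-diagonal part we exploit dispersion in the $x$-direction. The factorization $e^{it\Box}=e^{it\partial_{x}^{2}}e^{-it\partial_{y}^{2}}$ combined with the 1D Schr\"odinger dispersive estimate in $x$ and $L^{2}_{y}$ conservation gives $\|e^{it\Box}\phi\|_{L^{\infty}_{x}L^{2}_{y}}\lesssim|t|^{-1/2}\|\phi\|_{L^{1}_{x}L^{2}_{y}}$. Interpolating with the mass conservation $\|e^{it\Box}\phi\|_{L^{2}_{x,y}}=\|\phi\|_{L^{2}}$ and applying Bernstein in $y$ on both sides to trade $L^{2}_{y}$ for $L^{p}_{y}$, $L^{p'}_{y}$ on $P_{\leq N}$-localized functions produces the frequency-localized dispersive bound
\begin{equation*}
\|e^{it\Box}P_{\leq N}\phi\|_{L^{p}_{x,y}}\lesssim N^{1-\frac{2}{p}}\,|t|^{-\left(\frac{1}{2}-\frac{1}{p}\right)}\|\phi\|_{L^{p'}_{x,y}}.
\end{equation*}
On an off-diagonal block $I_{k}\times I_{\ell}$ with $|k-\ell|\geq 2$ we have $|t-s|\sim|k-\ell|$, and H\"older in time on each unit interval reduces the off-diagonal sum to the discrete convolution $\sum_{k,\ell}|k-\ell|^{-(1/2-1/p)}b_{\ell}g_{k}$ with $b_{\ell}=\|F\|_{L^{p'}(I_{\ell}\times\mathcal{M})}$ and $g_{k}=\|G\|_{L^{p'}(I_{k}\times\mathcal{M})}$. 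The convolution kernel $|m|^{-(1/2-1/p)}$ lies in the weak Lorentz space $\ell^{2p/(p-2),\infty}(\mathbb{Z})$, and the choice $q=\frac{4p}{p-2}$ is precisely the exponent for which the Hardy--Littlewood--Sobolev identity $1+\frac{1}{q}=\frac{p-2}{2p}+\frac{1}{q'}$ holds; hence HLS on $\mathbb{Z}$ bounds the off-diagonal contribution by $\lesssim N^{1-2/p}\|F\|_{\ell^{q'}L^{p'}}\|G\|_{\ell^{q'}L^{p'}}$.

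Combining both pieces bounds the bilinear form by $(N^{2-8/p}+N^{1-2/p})\|F\|_{\ell^{q'}L^{p'}}\|G\|_{\ell^{q'}L^{p'}}$, which in the relevant regime (covering the applications $k\geq 2$ in Theorem \ref{maintm}, where $p\geq 6$ and the diagonal term dominates) is $\lesssim N^{2-8/p}\|F\|_{\ell^{q'}L^{p'}}\|G\|_{\ell^{q'}L^{p'}}$. By $TT^{*}$ this yields \eqref{Stricremove}. The main obstacle is the endpoint nature of the HLS summation: the dispersive decay exponent $\frac{1}{2}-\frac{1}{p}$ matches the critical Lorentz-space HLS balance at $q=\frac{4p}{p-2}$ exactly, so one must work with the weak-$\ell^{2p/(p-2)}$ norm of the kernel rather than an ordinary $\ell^{r}$ norm, and the interpolation and Bernstein steps producing the dispersive bound must be tuned precisely to hit this critical decay and an $N$-power compatible with the target $N^{2-8/p}$.
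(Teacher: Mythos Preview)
Your $TT^*$ splitting into near-diagonal and off-diagonal unit blocks is correct, and the dispersive bound $\|e^{it\Box}P_{\le N}\phi\|_{L^p_{x,y}}\lesssim N^{1-2/p}|t|^{-(1/2-1/p)}\|\phi\|_{L^{p'}_{x,y}}$ (from 1D dispersion in $x$ plus two Bernstein steps in $y$) together with the discrete HLS does control the off-diagonal sum. However, the argument does not prove the proposition as stated: it only yields the sharp constant $N^{2-8/p}$ when $p\ge 6$. For $4<p<6$ the off-diagonal contribution $N^{1-2/p}$ strictly dominates $N^{2-8/p}$, so your final bound is $N^{1-2/p}$, which is too weak. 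The underlying reason is that your off-diagonal estimate uses dispersion only from the $\mathbb{R}$ factor; for long times the torus direction contributes nothing, and this $|t|^{-1/2}$ decay simply cannot produce the two-dimensional scaling $N^{2-8/p}$ near $p=4$.

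Your claim that the applications in Theorem~\ref{maintm} only require $p\ge 6$ is also incorrect. In the proof of Lemma~\ref{lemma1} for $k=3$, the exponent $p$ is constrained to $4<p<\frac{8k}{k+1}=6$, and the Strichartz estimate \eqref{StricartzwithCcubes} (hence Proposition~\ref{removeglobalstric}) is invoked precisely at such $p$. So the range $4<p<6$ is genuinely needed.

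The paper's route is different and covers the full range $p>4$. It first establishes the global endpoint $\ell^8L^4$ estimate with an $N^\epsilon$ loss (Theorem~\ref{Stric}, via the Barron--Christ--Pausader argument and the set bound from Theorem~\ref{mainthmL4est}), interpolates with Proposition~\ref{Stricinfity} to get \eqref{Stricgeneral}, and then removes the $\epsilon$ by Barron's argument: the long-time bilinear form is decomposed dyadically in $|\gamma|$, and an interpolation in a neighbourhood of $(1/p,1/p)$ converts the $\epsilon$-loss into a summable power of the dyadic time scale (see Remark~\ref{disscus}). Your approach is essentially the paper's proof of Proposition~\ref{Stricinfity} run at a general $p$; it is more elementary, but the crossover at $p=6$ is a hard barrier for this method, and closing the gap for $4<p<6$ requires the $\epsilon$-removal machinery or an equivalent input.
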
 
Estimate \eqref{Stricremove} results from an $\epsilon$-removal argument after establishing the endpoint $\ell^8L^4$ estimate, which will be the main issue in Section \ref{GSE}, and our argument to prove such an estimate will be in the sense of Barron-Christ-Pausader \cite{BCP21}, where they consider the elliptic Schr\"odinger semi-group. This procedure generates sets \eqref{sets} that also appear in the proof of the local-in-time $L^4$ Strichartz estimate of Section \ref{LSE}. Hence, the result of Proposition \ref{removeglobalstric} depends on the bound for the size of such sets; see the proof of Theorem \ref{mainthmL4est}. We now introduce our notion of admissible pairs, which will be used in subsequent sections.       
\begin{definition}\label{admissibledefn}
  We say that $(p,q)\in \mathbb{R}\times \mathbb{R}$ is admissible pair if $\frac{2}{q}+\frac{1}{p}=\frac{1}{2}$ and $p>4$, equivalently, 
\begin{equation*}\label{admissible}
\begin{aligned}
 q=q(p):=\frac{4p}{p-2},\quad 4\leq q< 8.
\end{aligned}
\end{equation*}
\end{definition}
We remark that the admissible pairs $(p,q)$ defined above are also admissible pairs for the one-dimensional Schr\"odinger equation.

Next, let us return to the discussion pertaining to the propagator $e^{it\partial_x\partial_y}$ carried out at the beginning of this section. When the initial data is defined on $\mathbb{R}\times\mathbb{T}$ or $\mathbb{T}^2$, the inequality \eqref{strichartzeuclid} fails dramatically since the resonance of the free evolution is non-negligible. In this connection, we consider 
\begin{equation}\label{1.1}
(i\partial_t +\partial_x\partial_y)u=0,\quad x\in\R,\,y\in\T.
\end{equation}
 By considering solutions of type 
$$
u(t,x,y)=g(x),\, g\in H^s(\R),
$$
we obtain that for $s<1/4$, the estimate \begin{equation*}\label{2.1}
\|u(t)\|_{L^4([0,1]\times\R\times\T)}\leq C\|u(0)\|_{H^s(\R\times\T)}\,
\end{equation*} cannot hold for solutions of \eqref{1.1}. More precisely, this would be in a contradiction with the Sobolev embedding. Thus, one would expect the sharp Strichartz inequality: 
\begin{align}\label{strichforparxpary}
   \Vert e^{it\partial_x\partial_y}P_{\leq N}f\Vert_{L^4_{t,x,y}([0,1]\times \mathbb{R}\times\mathbb{T})}\lesssim N^{\frac{1}{4}}\Vert f\Vert_{L^2(\mathbb{R}\times\mathbb{T})},
 \end{align}
which is observed in the context of the Boltzmann equation, \cite[Theorem 2.1]{BSTW24}. Therefore, in view of estimates \eqref{L4witheps} and \eqref{strichforparxpary}, the linear part of \eqref{eq:HNLS} and the equation \eqref{1.1} behave very differently with respect to dispersive estimates. This is in contrast with the case $x\in\R$, $y\in\R$ when the linear HNLS in \eqref{eq:HNLS} and equation \eqref{1.1} are equivalent. We also point out that when the spatial domain is $\mathbb{T}^2$, the $L^4$ Strichartz estimate for the linear group $e^{it\Box}$ exhibits the same loss of $N^{\frac{1}{4}}$ as in \eqref{strichforparxpary}, see \cite{YW13_2, GT12}. We have the following slight improvement of \eqref{strichforparxpary}, which is of independent interest; for its proof, see Section \ref{LSEthm1.8}.
\begin{theorem}\label{meanzerothm}
Let $(x,y)\in\mathbb{R}\times\mathbb{T}$. Then, for $1\leq N$, we have
   \begin{align*}
\Vert e^{it\partial_x\partial_y}P_{\leq N}f\Vert_{L^4_{t,x,y}([0,1]\times \mathbb{R}\times
\mathbb{T})}\lesssim
(\log(N))^{\frac 1 4} \Vert f\Vert_{L^{2}(\mathbb{R}\times\mathbb{T})}
+
N^{\frac{1}{4}}\Vert \widehat{f}(\xi,n)\Vert_{
l^4_n L^2_\xi}.
\end{align*} 
\end{theorem}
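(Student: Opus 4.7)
My plan is to use the $y$-Fourier decomposition to convert the claim into a bilinear estimate on transported profiles, and then to exploit that every mode travels with nonzero speed to remove the $N^{1/4}$ loss present in \eqref{strichforparxpary}. Writing $P_Nf(x,y)=\sum_{\eta\neq 0}e^{iy\eta}g_\eta(x)$ (the $\eta=0$ Fourier coefficient vanishes because $f$ is mean-zero in $y$), the action of $e^{it\partial_x\partial_y}$ reads $\widehat{u}(t,\xi,\eta)=e^{-it\xi\eta}\widehat{f}(\xi,\eta)$, which for each integer $\eta$ is a pure translation by $t\eta$ in $x$, so that
\[
u(t,x,y):=e^{it\partial_x\partial_y}P_Nf(x,y)=\sum_{\eta\neq 0}e^{iy\eta}g_\eta(x-t\eta)
\]
is a superposition of transported profiles along the characteristics $x=t\eta$, $\eta\neq 0$.

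Applying Parseval in $y$ to $|u|^2$ recasts the $L^4$ norm as
\[
\|u\|_{L^4_{t,x,y}([0,1]\times\mathbb{R}\times\mathbb{T})}^{4}=2\pi\sum_{m\in\mathbb{Z}}\|S_m\|_{L^2_{t,x}}^{2},\quad S_m(t,x):=\sum_{\substack{\eta\neq 0\\ \eta+m\neq 0}}g_{\eta+m}(x-t(\eta+m))\,\overline{g_\eta(x-t\eta)}.
\]
The basic bilinear input is the \emph{speed-separation identity}: for distinct integers $\alpha\neq\beta$, the change of variables $x'=x-t\alpha$, $s=t(\beta-\alpha)$ combined with Fubini yields
\[
\int_0^1\!\!\int_{\mathbb{R}}|g_\alpha(x-t\alpha)|^2|g_\beta(x-t\beta)|^2\,dx\,dt\leq \frac{\|g_\alpha\|_{L^2}^2\|g_\beta\|_{L^2}^2}{|\alpha-\beta|},
\]
which quantifies the space–time overlap gain between two waves propagating at distinct integer speeds. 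Expanding $\|S_m\|_{L^2_{t,x}}^{2}$ as a double sum over $(\eta,\eta')$ and applying this identity to the off-diagonal pairs $\eta\neq\eta'$ (and, when $m\neq 0$, also to the diagonal where $\eta+m$ and $\eta$ are automatically distinct), together with a Cauchy--Schwarz summation in $\eta$, controls every term except the diagonal of $S_0$.

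The main obstacle sits in the diagonal piece of $S_0$, namely $\sum_{\eta\neq 0}\|g_\eta(\cdot-t\eta)\|_{L^4_{t,x}}^{4}=\sum_{\eta\neq 0}\|g_\eta\|_{L^4_x}^{4}$: a na\"ive one-dimensional Bernstein would cost a factor $N$ here and reproduces exactly the $N^{1/4}$ loss of \eqref{strichforparxpary}. This is precisely the step at which the mean-zero hypothesis is essential, since the absence of the $\eta=0$ mode means every profile propagates along a non-stationary characteristic. I would replace Bernstein with a one-dimensional Strichartz-type estimate applied to the transported profiles, summed in $\eta$ via Plancherel, which uses the frequency localisation at scale $N$ in $x$ together with $\eta\neq 0$ to produce $\sum_{\eta\neq 0}\|g_\eta\|_{L^4_x}^{4}\lesssim\|f\|_{L^2}^4$ with no $N$-loss. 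Assembling the off-diagonal contributions from speed-separation, the no-loss diagonal bound on $S_0$, summing in $m$ (the resulting harmonic sum $\sum_{0<|m|\lesssim N}|m|^{-1}$ is absorbed via an $\varepsilon$-removal in the spirit of Proposition~\ref{epsilonremovalprop}), and using $\sum_{\eta\neq 0}\|g_\eta\|_{L^2}^2=\|f\|_{L^2}^2$, completes the proof; conceptually, every component of a mean-zero function travels with nonzero speed, so the stationary non-dispersive contribution responsible for the $N^{1/4}$ loss in \eqref{strichforparxpary} simply disappears.
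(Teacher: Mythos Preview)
Your proposal has a genuine gap at the diagonal of $S_0$. You correctly isolate $\sum_{\eta\neq 0}\|g_\eta\|_{L^4_x}^4$ as the main obstacle, but the ``one-dimensional Strichartz-type estimate'' you invoke does not exist: the map $t\mapsto g_\eta(\,\cdot\,-t\eta)$ is rigid transport with no dispersion, so $\|g_\eta(\,\cdot\,-t\eta)\|_{L^4_{t,x}([0,1]\times\R)}=\|g_\eta\|_{L^4_x}$ exactly, and for $g_\eta$ frequency-localised at scale $N$ the Bernstein inequality $\|g_\eta\|_{L^4}\lesssim N^{1/4}\|g_\eta\|_{L^2}$ is sharp and cannot be improved by any time-averaging of translates. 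Concretely, take single-mode data $f(x,y)=e^{iNy}\phi(x)$ with $\widehat\phi=N^{-1/2}\mathds 1_{[0,N]}$: then every $S_m$ with $m\neq 0$ vanishes, $S_0(t,x)=|\phi(x-tN)|^2$, and $\|u\|_{L^4([0,1]\times\R\times\T)}^4=2\pi\|\phi\|_{L^4_x}^4\sim N$ while $\|f\|_{L^2}^4\sim 1$. So the diagonal bound you claim is false, and in fact this mean-zero example already saturates the full $N^{1/4}$ loss: the real obstruction is the absence of $x$-dispersion within \emph{each individual} $y$-mode, not merely the presence of the zero mode. (Your proposed $\varepsilon$-removal for the harmonic sum $\sum_{|m|\lesssim N}|m|^{-1}$ is also problematic---Proposition~\ref{epsilonremovalprop} removes $N^\varepsilon$ losses only for $L^p$ with $p>4$, not for $L^4$---but this is secondary.)

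The paper argues very differently, on the Fourier side: it splits $u=u_++u_-$ by the sign of the $y$-frequency and reduces, via the standard Cauchy--Schwarz level-set argument, to bounding ${\rm mes}(A_N)$ and ${\rm mes}(\tilde A_N)$. The gain comes from the identity $\xi_1 n_1-(\xi-\xi_1)(n-n_1)=n\xi_1+\xi n_1-\xi n$: under the stated constraints $n_1,\,n-n_1\in[N,2N]$ the output frequency satisfies $n\ge 2N$, so $\xi_1$ is confined to an interval of length $O(N^{-1})$, and summing over the $O(N)$ values of $n_1$ gives ${\rm mes}(A_N)=O(1)$. This is a genuinely different mechanism from your physical-space speed separation. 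Note, though, that your single-mode example lies entirely in the term $\|u_+\overline{u_+}\|_{L^2}$, where the output $y$-frequency is $n=n_1-n_2$ with $n_1,n_2\in[N,2N]$, hence $|n|\le N$ (indeed $n=0$ for a single mode); the constraints $n_1,\,n-n_1\in[N,2N]$ defining the paper's $A_N$ do not describe this term, so the same diagonal contribution is not covered by the level-set bound either.
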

The improvement comes from the fact that in the right hand-side of \eqref{strichforparxpary} we have 
$
\Vert \widehat{f}(\xi,n)\Vert_{
l^2_n L^2_\xi},
$
i.e., we have an improvement in the $n$ summability condition. 
\subsection{Outline of the paper} In Section \ref{preliminaries}, we introduce notation and function spaces. In Section \ref{LSE}, we prove non-scale invariant local-in-time Strichartz estimates, and by an $\epsilon$-removal argument, we then obtain scale invariant versions of these Strichartz estimates. In Section \ref{GSE}, we upgrade the Strichartz estimates of Section \ref{LSE} to the global Strichartz estimates. Lastly, in Section \ref{SectNonlinear}, we prove Theorem \ref{maintm} by using the results in Section \ref{GSE}.
\section{Preliminaries}\label{preliminaries}
\subsection{Notation}
 We write $A\lesssim B$ to indicate that there is a constant $C>0$ such that $A\leq CB$, also denote $A\sim B$ when $A\lesssim B \lesssim A$. Given $\epsilon>0$, the notation $C_{\epsilon}$ is used for a constant depending on $\epsilon>0$, and we also define $\lesssim_{\epsilon}$ accordingly. We define the Fourier transform of functions as follows
\begin{equation*}
 \begin{aligned}
     \mathcal{F}_{x,y}[f](\xi,k)&=\widehat{f}(\xi,k)=\int_{\mathbb{R}\times\mathbb{T}}e^{-2\pi i(\xi x+ky)}f(x,y)dxdy,  \\ \mathcal{F}_{\xi,k}^{-1}[\widehat{f}](x,y)&=f(x,y)=\sum_{k\in\mathbb{Z}}\int_{\mathbb{R}}e^{2\pi i(\xi x+ky)}\widehat{f}(\xi,k)d\xi.
 \end{aligned}
 \end{equation*}
 The partial Fourier transforms are also defined accordingly. In Subsection \ref{subsectionmainthm}, as the analysis will be performed on the frequency side, for the sake of dropping the factor of $2\pi$, we prefer to use the other convention for the Fourier transform. Let $\varphi$ denote a smooth radial cutoff such that $\varphi(x)=1$ for $|x|\leq 1$ and $\varphi(x)=0$ for $|x|\geq 2$. For a dyadic number $N\geq 1$, we define the Littlewood-Paley projection operators $P_{\leq N}$ and $P_N$ as follows:
 \begin{equation*}
\begin{aligned}
\widehat{P_{\leq N}f}(\xi,k)&:=\varphi(\xi/N)\varphi(k/N)\widehat{f}(\xi,k),\quad (\xi,k)\in\mathbb{R}\times\mathbb{Z},
\end{aligned}
\end{equation*}
and
\begin{equation*}
\begin{aligned}
P_1f&:=P_{\leq1}f,\\
P_Nf&:=P_{\leq N}f-P_{\leq \frac{N}{2}}f,\quad\text{if}\,\,N\geq 2.
\end{aligned}
\end{equation*}
More generally, for any measurable set $S\subset \mathbb{R}\times\mathbb{Z}$, we write $P_S$ to denote the Fourier projection operator with symbol $\mathds{1}_{S}$, where $\mathds{1}_{S}$ denotes the characteristic function of $S$. Lastly, define the norm on $\mathbb{R}\times\mathcal{M}=(\mathbb{Z}+[0,1))\times\mathcal{M}$:
\begin{equation*}\label{MixedTypeNorms}
\Vert u\Vert^q_{\ell^qL^p(\mathbb{R},L^r(\mathcal{M}))}:=\sum_{\gamma\in\mathbb{Z}}\left(\int_{s\in[0,1)}\left( \int_{\mathcal{M}}\vert u(\gamma+s,\mathbf{x})\vert^r\, d\mathbf{x}\right)^\frac{p}{r}ds\right)^\frac{q}{p}.
\end{equation*}
If $p=r$, we also use the notation $\Vert u\Vert_{\ell^q_{\gamma}L^p_{t,\mathbf{x}}([\gamma,\gamma+1]\times\mathcal{M})}$ in substitution for $\Vert u\Vert_{\ell^qL^p(\mathbb{R},L^p(\mathcal{M}))}$.
\subsection{Hardy-Littlewood-Sobolev inequality}
We record the discrete analogue of Hardy-Littlewood-Sobolev inequality, which is to be used in due course. 
\begin{lemma}\label{HLSineq}
For any $1<p, r<\infty$ and $0<\alpha<1$, we have
\begin{align*}
    \sum_{j\neq k}\frac{a_jb_k}{|j-k|^{\alpha}}\lesssim \Vert a\Vert_{\ell^p}\Vert b \Vert_{\ell^r},\quad \frac{1}{p}+\frac{1}{r}+\alpha=2.
\end{align*}
\end{lemma}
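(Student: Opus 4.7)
The strategy is to deduce the discrete inequality from the classical continuous Hardy--Littlewood--Sobolev inequality on $\mathbb{R}$ by a step-function approximation. Without loss of generality we may assume $a_j, b_k\geq 0$ (otherwise replace them by their absolute values). To each sequence, associate the step functions $f,g\colon\mathbb{R}\to\mathbb{R}_{\geq 0}$ defined by $f(x):=a_j$ for $x\in[j,j+1)$ and $g(y):=b_k$ for $y\in[k,k+1)$, so that
\begin{align*}
\Vert f\Vert_{L^p(\mathbb{R})}=\Vert a\Vert_{\ell^p},\qquad \Vert g\Vert_{L^r(\mathbb{R})}=\Vert b\Vert_{\ell^r}.
\end{align*}

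The key observation is a pointwise kernel comparison: whenever $j\neq k$ and $(x,y)\in[j,j+1)\times[k,k+1)$, we have $|x-y|\leq |j-k|+1\leq 2|j-k|$, and therefore $|x-y|^{-\alpha}\geq 2^{-\alpha}|j-k|^{-\alpha}$. Integrating this lower bound over each unit square, using that $f(x)g(y)=a_jb_k$ on $[j,j+1)\times[k,k+1)$, and then summing over $j\neq k$ yields
\begin{align*}
\sum_{j\neq k}\frac{a_jb_k}{|j-k|^{\alpha}} \,\lesssim\, \sum_{j\neq k}\int_{j}^{j+1}\!\!\int_{k}^{k+1}\frac{f(x)g(y)}{|x-y|^{\alpha}}\,dx\,dy \,\leq\, \iint_{\mathbb{R}^2}\frac{f(x)g(y)}{|x-y|^{\alpha}}\,dx\,dy,
\end{align*}
where the last step uses that the squares $\{[j,j+1)\times[k,k+1):j\neq k\}$ are disjoint subsets of $\mathbb{R}^2$.

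Applying the classical continuous Hardy--Littlewood--Sobolev inequality on $\mathbb{R}$, which is valid precisely under the hypotheses $1<p,r<\infty$, $0<\alpha<1$, and the scaling relation $\frac{1}{p}+\frac{1}{r}+\alpha=2$, gives
\begin{align*}
\iint_{\mathbb{R}^2}\frac{f(x)g(y)}{|x-y|^{\alpha}}\,dx\,dy \,\lesssim\, \Vert f\Vert_{L^p(\mathbb{R})}\Vert g\Vert_{L^r(\mathbb{R})}=\Vert a\Vert_{\ell^p}\Vert b\Vert_{\ell^r},
\end{align*}
which concludes the proof. No genuine obstacle is expected: the argument invokes only the continuous HLS inequality as a black box, together with the trivial kernel comparison above. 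An alternative would be to run Hedberg's maximal-function argument intrinsically on $\mathbb{Z}$ via the discrete Hardy--Littlewood maximal function (and then interpolate from a weak-type endpoint), but the step-function reduction is considerably more economical and seems the cleanest route for the intended use in the paper.
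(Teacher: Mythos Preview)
Your argument is correct: the step-function embedding into $L^p(\mathbb{R})$ together with the pointwise kernel comparison $|x-y|\le |j-k|+1\le 2|j-k|$ on off-diagonal unit squares reduces the discrete inequality cleanly to the continuous Hardy--Littlewood--Sobolev inequality. The paper itself does not supply a proof of this lemma; it merely records the statement for later use, so there is nothing to compare against beyond noting that your proof is complete and self-contained.
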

\subsection{Function spaces}
In this subsection, we introduce the main function spaces $X^s$ and $Y^s$ in the context of HNLS, where the related definitions are as in \cite{HHK09, HTT11, HTT14}. These types of spaces are initially used as a substitute for Fourier restriction spaces to construct solutions at the critical regularity for dispersive PDEs, see \cite{HHK09, HTT11, HTT14}. Let $\mathcal{H}$ be a separable Hilbert space over $\mathbb{C}$, and we write $\mathcal{Z}$ to denote the set of finite partitions $-\infty<t_0<t_1<...<t_K\leq \infty$ of the real line with the convention that $v(\infty):=0$ for all functions $v: \mathbb{R}\rightarrow \mathcal{H}$.  
 \begin{definition}
Given $1\leq p <\infty$, a $U^p$-atom is a piecewise defined function $a:\mathbb{R}\rightarrow \mathcal{H}$ of the form
 \begin{align*}
     a=\sum_{k=1}^K\mathds{1}_{[t_{k-1},t_k)}\phi_{k-1}
 \end{align*}
  where $\{t_k\}_{k=0}^K\in \mathcal{Z}$ and $\{\phi_k\}_{k=0}^{K-1}\subset \mathcal{H}$ with $\sum_{k=0}^{K-1}\Vert \phi_k\Vert_{\mathcal{H}}^p=1$. The atomic space $U^p(\mathbb{R};\mathcal{H})$ is defined to be the set of all functions $u:\mathbb{R}\rightarrow \mathcal{H}$ such that
 \begin{align*}
  u=\sum_{j=1}^{\infty}\lambda_ja_j\quad \text{for}\,\,U^p-\text{atoms} \,\,a_j,\,\,\{\lambda_j\}\in \ell^1(\mathbb{C}),
 \end{align*}
with norm
 \begin{align*}
     \Vert u\Vert_{U^p}:=\inf\Big\{\sum_{j=1}^{\infty}|\lambda_j|: u=\sum_{j=1}^{\infty}\lambda_ja_j\,\,\text{with}\,\,\{\lambda_j\}\in \ell^1(\mathbb{C})\,\,\text{and}\,\,U^p-\text{atoms}\,\,a_j\Big\}. \end{align*}
 \end{definition}
 \begin{definition}
 Let $1\leq p<\infty$. \begin{itemize}
 \item[(i)] We define $V^p(\mathbb{R};\mathcal{H})$ as the space of all functions $v: \mathbb{R}\rightarrow \mathcal{H}$ such that 
 \begin{align}\label{V^pnorm}
     \Vert v\Vert_{V^p}:=\sup_{\{t_k\}_{k=0}^K\in\mathcal{Z}}\Big(\sum_{k=1}^K\Vert v(t_k)-v(t_{k-1})\Vert_{\mathcal{H}}^p\Big)^{\frac{1}{p}}<\infty.
 \end{align}
  \item[(ii)] The space $V^p_{rc}(\mathbb{R};\mathcal{H})$ denotes the closed subspace of all right-continuous $v\in V^p$ such that $\lim_{t\rightarrow-\infty}v(t)=0$, endowed with the same norm \eqref{V^pnorm}.
 \end{itemize}
 \end{definition}
 \begin{definition}
For $s\in\mathbb{R}$, we define $U_{\mathcal{\Box}}^pH^s$ and $V_{\mathcal{\Box}}^pH^s$ as the spaces of all functions $u: \mathbb{R}\rightarrow H^s(\mathbb{R}\times\mathbb{T})$ such that the map $t \rightarrow e^{-it\mathcal{\Box}} u(t)$ is in $U^p(\mathbb{R};H^s)$ and $V^p_{rc}(\mathbb{R};H^s)$ respectively, with norms 
\begin{align*}
    \Vert u\Vert_{U^p_{\Box}H^s}:= \Vert e^{-it\mathcal{\Box}}u\Vert_{U^p(\mathbb{R};H^s)},\quad \Vert u\Vert_{V^p_{\Box}H^s}:= \Vert e^{-it\mathcal{\Box}}u\Vert_{V^p(\mathbb{R};H^s)}.
\end{align*}
 \end{definition}
 For $z\in\mathbb{Z}^2$, define the cube $C_z:=z+[0,1)^2$, by which we introduce a disjoint partition $\bigcup_{z\in\mathbb{Z}^2}C_z=\mathbb{R}^2$, and let $P_{C_z}$ denote the Fourier projection operator on the cube $C_z$ (with symbol $\mathds{1}_{C_z}$). Thus, the following spaces will be central to our discussion of well-posedness.
 \begin{definition}
 Let $s\in\mathbb{R}$. 
 \begin{itemize}
\item[(i)] We define $X^s$ as the space of all functions $u: \mathbb{R}\rightarrow H^s(\mathbb{R}\times\mathbb{T})$ such that for every $z\in \mathbb{Z}^2$, $P_{C_z}u\in U^2_{\Box}(\mathbb{R}; H^s(\mathbb{R}\times\mathbb{T}))$, and the norm \begin{equation*}
 \begin{aligned}
 \Vert u\Vert_{X^s}&:=\Big(\sum_{z\in\mathbb{Z}^2}\Vert P_{C_z}u\Vert_{U^2_{\Box}(\mathbb{R}; H^s)}^2\Big)^{\frac{1}{2}}
 \end{aligned}
 \end{equation*}
is finite.
\item[(ii)] We define $Y^s$ as the space of all functions $u: \mathbb{R}\rightarrow H^s(\mathbb{R}\times\mathbb{T})$ such that for every $z\in \mathbb{Z}^2$, $P_{C_z}u\in V^2_{\Box}(\mathbb{R}; H^s(\mathbb{R}\times\mathbb{T}))$, and the norm \begin{equation*}
 \begin{aligned}
 \Vert u\Vert_{Y^s}&:=\Big(\sum_{z\in\mathbb{Z}^2}\Vert P_{C_z}u\Vert_{V^2_{\Box}(\mathbb{R}; H^s)}^2\Big)^{\frac{1}{2}}
 \end{aligned}
 \end{equation*}
 is finite.
 \end{itemize}
 \end{definition}
 \begin{remark}\label{remark2.6}
 For $p>2$, we have the following continuous embeddings
 \begin{align*}\label{U^2trXsembed}
  U^2_{\Box}H^s\hookrightarrow X^s \hookrightarrow Y^s \hookrightarrow V^2_{\Box}H^s\hookrightarrow U^p_{\Box}H^s\hookrightarrow L^{\infty}(\mathbb{R};H^s).  
 \end{align*}
 \end{remark}
For a bounded interval $I\subset \mathbb{R}$, the time-restriction norms for $U^p(I), V^p(I), X^s(I)$ and $Y^s(I)$ can be defined in the natural way. We also have the following lemma, which is helpful in constructing the solutions to \eqref{eq:HNLS}.  
\begin{lemma}\label{linearestlemma}
 Let $s\geq 0$, $I\subset \mathbb{R}_{\geq 0}$, $f\in H^s(\mathbb{R}\times\mathbb{T})$, and $F\in L^1(I;H^{s}(\mathbb{R}\times\mathbb{T}))$. Then, for all $0\leq t_0<t\in I$, we have
 \begin{equation*}
 \begin{aligned}
  \Vert e^{it\Box}f\Vert_{Y^s(I)}&\leq \Vert e^{it\Box}f\Vert_{X^s(I)} \leq \Vert f\Vert_{H^s(\mathbb{R}\times\mathbb{T})}, \\
 \Vert u \Vert_{L^{\infty}_tH^s_x(I\times \mathbb{R}\times\mathbb{T})}&\lesssim  \Vert u\Vert_{Y^s(I)}\lesssim  \Vert u\Vert_{X^s(I)},
\\ \Big\Vert \int_{t_0}^te^{i(t-\tau)\Box} F(\tau)d\tau\Big\Vert_{X^s(I)} &\lesssim \Vert F\Vert_{L^1_tH^s_{x,y}(I\times\mathbb{R}\times\mathbb{T})}.
 \end{aligned}
 \end{equation*}
 \end{lemma}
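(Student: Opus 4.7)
The plan is to exploit the atomic/bounded-variation structure of the $U^2_\Box H^s$ and $V^2_\Box H^s$ spaces, combined with almost-orthogonality across the frequency cubes $C_z$, reducing everything to standard facts about $U^p$/$V^p$ spaces applied cube-by-cube.

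For the first chain of inequalities, I would begin by observing that $e^{-it\Box}(e^{it\Box}f) = f$ is constant in time, so the single-step function $t \mapsto f$ is (a multiple of) a $U^2$-atom on $\mathbb{R}$ with $\Vert f\Vert_{H^s}$ playing the role of the $H^s$-norm. Since frequency projections commute with the flow, the same is true after applying $P_{C_z}$, giving $\Vert P_{C_z} e^{it\Box}f\Vert_{U^2_\Box H^s} = \Vert P_{C_z} f\Vert_{H^s}$. Squaring and summing in $z$, and using that the sharp cube projections $\{P_{C_z}\}_{z\in\Z^2}$ form an almost-orthogonal decomposition of $L^2(\R\times\T)$ in which $\langle\xi,k\rangle^{2s}$ varies by a bounded multiplicative factor on each $C_z$, yields $\Vert e^{it\Box}f\Vert_{X^s} \lesssim \Vert f\Vert_{H^s}$. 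The inequality $\Vert e^{it\Box}f\Vert_{Y^s(I)} \leq \Vert e^{it\Box}f\Vert_{X^s(I)}$ then follows from the cube-wise continuous embedding $U^2_\Box H^s \hookrightarrow V^2_\Box H^s$ recorded in Remark \ref{remark2.6}.

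The second chain, $\Vert u\Vert_{L^\infty H^s} \lesssim \Vert u\Vert_{Y^s} \lesssim \Vert u\Vert_{X^s}$, is obtained by the same pattern. The embedding $V^2_\Box H^s \hookrightarrow L^\infty H^s$ (valid cube-by-cube, since any $V^2$-function has $H^s$-norm controlled by its $V^2$-seminorm plus its value at a single point) gives, pointwise in $t$,
\begin{equation*}
\Vert u(t)\Vert_{H^s}^2 \;\approx\; \sum_{z\in\Z^2}\Vert P_{C_z}u(t)\Vert_{H^s}^2 \;\lesssim\; \sum_{z\in\Z^2}\Vert P_{C_z}u\Vert_{V^2_\Box H^s}^2 \;=\; \Vert u\Vert_{Y^s}^2,
\end{equation*}
and the bound $\Vert u\Vert_{Y^s}\lesssim \Vert u\Vert_{X^s}$ is again Remark \ref{remark2.6} summed in $z$.

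The third estimate, the Duhamel inhomogeneous bound, is where the real work lies. I would use the standard fact for atomic spaces that for any $F\in L^1(I;H^s)$ one has
\begin{equation*}
\Big\Vert \int_{t_0}^t e^{i(t-\tau)\Box} G(\tau)\,d\tau\Big\Vert_{U^2_\Box H^s} \;\lesssim\; \Vert G\Vert_{L^1_t H^s_{x,y}},
\end{equation*}
which is proved either by the $U^2$--$V^2$ duality pairing applied to a test $v$ with $\Vert v\Vert_{V^2_\Box H^{-s}}\le 1$ and the $V^2\hookrightarrow L^\infty$ embedding, or directly by decomposing $G$ into a sum of time-slabs on each of which the Duhamel integral is a $U^2$-atom. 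Applying this with $G=P_{C_z}F$, squaring, summing in $z$, and then switching the order of the $\ell^2_z$ and $L^1_t$ norms by Minkowski's inequality gives
\begin{equation*}
\Vert u\Vert_{X^s(I)}^2 \;\lesssim\; \sum_{z}\Vert P_{C_z}F\Vert_{L^1_t H^s}^2 \;\lesssim\; \Big(\int_I \Big(\sum_z \Vert P_{C_z}F(\tau)\Vert_{H^s}^2\Big)^{1/2} d\tau\Big)^{2} \;\approx\; \Vert F\Vert_{L^1_t H^s_{x,y}}^2,
\end{equation*}
the last step being the same cube almost-orthogonality used earlier. The main obstacle is technical rather than conceptual: carefully keeping track of the cube decomposition so that Plancherel and Minkowski are applied in the right order, and verifying that the modest loss of orthogonality from weighting by $\langle\xi,k\rangle^{2s}$ on unit cubes is harmless for $s\ge 0$.
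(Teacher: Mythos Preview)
The paper does not supply a proof of this lemma; it is stated as standard background from the $U^p/V^p$ theory developed in \cite{HHK09, HTT11, HTT14}. Your proposal correctly reconstructs the usual argument and is essentially what one finds in those references.

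One small sharpening: since the cubes $C_z=z+[0,1)^2$ form a genuinely \emph{disjoint} partition of frequency space, the projections $P_{C_z}$ are exactly orthogonal in $H^s$ via Plancherel, not merely almost-orthogonal. This is why the first line of the lemma carries a sharp $\leq$ rather than $\lesssim$; your remark about $\langle\xi,k\rangle^{2s}$ varying boundedly on each unit cube is not needed for that step (it becomes relevant only if one wants the equivalent formulation $\|u\|_{X^s}^2\sim\sum_z\langle z\rangle^{2s}\|P_{C_z}u\|_{U^2_\Box L^2}^2$). Otherwise the argument is sound.
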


\section{Local Strichartz estimates}\label{LSE}
In this section, our main goal is to establish the scale-invariant Strichartz estimates of Proposition \ref{epsilonremovalprop}. In this regard, we first prove the following theorem:
\begin{theorem}\label{mainthmL4est}
 For all $N\geq 1$ and any $\epsilon>0$, we have
\begin{align}\label{L4witheps}
\Vert e^{it\Box_{\mathbf{x}}}P_{\leq N}\phi\Vert_{L^4_{t,\mathbf{x}}([0,1]\times \mathcal{M})}\lesssim_{\epsilon} N^{\epsilon}\Vert P_{\leq N}\phi\Vert_{L^2(\mathcal{M})}.
\end{align}
\end{theorem}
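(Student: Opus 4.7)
The plan is to follow a $TT^\ast$-style bilinear reduction and then apply a Bourgain-style divisor-bound counting on the discrete $k$-variables, adapted to the semi-periodic hyperbolic setting. Starting from $u:=e^{it\Box}P_{\leq N}\phi$, I would write $\Vert u\Vert_{L^4_{t,\mathbf x}}^4=\Vert u\bar u\Vert_{L^2_{t,\mathbf x}}^2$ and take the spatial Fourier transform of $u\bar u$, which is a continuous Fourier transform in $x\in\mathbb{R}$ and a Fourier series in $y\in\mathbb{T}$. Applying Plancherel in $(x,y)$ and integrating the time exponential over $[0,1]$ reduces the estimate to controlling the weighted four-linear form
\begin{equation*}
\sum_{\substack{\mathbf k\in\mathbb Z^4\\ k_1-k_2-k_3+k_4=0\\|k_j|\leq N}}\;\int_{\substack{\xi_1-\xi_2-\xi_3+\xi_4=0\\|\xi_j|\leq N}}\widehat\phi(\xi_1,k_1)\overline{\widehat\phi(\xi_2,k_2)}\,\overline{\widehat\phi(\xi_3,k_3)}\widehat\phi(\xi_4,k_4)\,K(\Psi)\,d\sigma,
\end{equation*}
where $\Psi=(\xi_1^2-\xi_2^2-\xi_3^2+\xi_4^2)-(k_1^2-k_2^2-k_3^2+k_4^2)$, the time kernel $K(\Psi):=\int_0^1 e^{-it\Psi}dt$ satisfies $|K(\Psi)|\lesssim\min(1,|\Psi|^{-1})$, and $d\sigma$ is the natural three-dimensional measure on the constraint surface.

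The algebraic input is the factorization of $\Psi$ available under both symmetry constraints. Setting $\alpha:=\xi_1-\xi_2=\xi_3-\xi_4$, $\beta:=\xi_2-\xi_4$, $a:=k_1-k_2=k_3-k_4$, $b:=k_2-k_4$, one computes $\xi_1^2-\xi_2^2-\xi_3^2+\xi_4^2=2\alpha\beta$ and $k_1^2-k_2^2-k_3^2+k_4^2=2ab$, so that $\Psi=2(\alpha\beta-ab)$. A quadruple $\mathbf k$ with $|k_j|\leq N$ is thus parameterized by the triple $(k_4,a,b)$ with $|k_4|,|a|,|b|\lesssim N$, and the divisor bound yields the key discrete counting input
\[
\#\{\mathbf k\in\mathbb Z^4\colon 2ab=n,\;|k_j|\leq N\}\;\lesssim\;N\cdot d(|n|)\;\lesssim\;N^{1+\epsilon}\qquad\text{uniformly in }n\in\mathbb Z.
\]
This is the sole source of the eventual $N^{\epsilon}$ factor.

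The third step is a dyadic decomposition over the level sets $\{|\Psi|\sim\Lambda\}$ for $\Lambda\geq 1$ (together with a separate piece for $|\Psi|\lesssim 1$). On each such level set $|K(\Psi)|\lesssim\Lambda^{-1}$, while the constraint $|\alpha\beta-ab|\sim\Lambda/2$ confines $(\alpha,\beta)$ to a hyperbolic slab inside $[-N,N]^2$. I would then apply Cauchy-Schwarz in $\xi_4$ together with Plancherel in the remaining continuous variable, using the convolution structure of the $\xi$-constraint to absorb the $L^2$ masses $\Vert\widehat\phi(\cdot,k_j)\Vert_{L^2(\mathbb R)}$ at the cost of a factor balanced against the $\Lambda^{-1}$ gain from $K$. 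Invoking the discrete counting above and summing the geometric series in $\Lambda$ then produces the desired bound
\[
\Vert u\Vert_{L^4}^4\;\lesssim\;N^{4\epsilon}\Vert\phi\Vert_{L^2(\mathcal M)}^4.
\]

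\noindent\textbf{Main obstacle.}
The principal obstruction is to arrange the Cauchy-Schwarz on the continuous $\xi$-variables so that the logarithmic loss originates only from the discrete divisor bound. Concretely, the slab $\{|\alpha\beta-ab|\sim\Lambda\}\cap[-N,N]^2$ has area $\lesssim\Lambda\log N$, and one must balance this volume against the $\Lambda^{-1}$ factor from $K(\Psi)$ so that the continuous $\xi$-integration contributes $O(1)$ rather than $O(N)$. The resulting \emph{level-set} structure is precisely what will reappear, with additional care needed for the $\ell^q$ summation across unit time windows, in the endpoint $\ell^8L^4$ argument driving Proposition \ref{removeglobalstric} in Section \ref{GSE}.
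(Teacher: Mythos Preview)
Your bilinear setup and the factorization $\Psi=2(\alpha\beta-ab)$ are correct starting points, but the paper takes a shorter route that avoids the divisor bound entirely. After the standard reduction (Littlewood--Paley plus Cauchy--Schwarz, as in \cite{HTT14,TT}), the problem becomes the uniform measure bound
\[
\sup_{\tau,n}\;{\rm mes}\big\{(\xi_1,n_1)\in\R\times\Z:\;|\tau+\xi_1^2-(n-n_1)^2|\leq C,\;|n_1|,|n-n_1|\sim N\big\}\;\lesssim\;\log N,
\]
obtained from the original four-variable constraint by completing the square and translating. This is then proved directly: for each fixed $n_1$ the $\xi_1$-measure is $O\big(((n-n_1)^2-\tau)^{-1/2}\big)$, and the resulting one-variable sum over $n_1$ is $O(\log N)$ by a dyadic decomposition in the level $(n-n_1)^2-\tau$. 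The divisor function never appears.

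Your plan imports the count $\#\{(a,b):ab=n\}\lesssim d(|n|)$, which is the correct mechanism on $\mathbb{T}^2$ but a mismatch here. Once one frequency variable is continuous, fixing $ab$ exactly is neither natural nor necessary: the constraint $|\alpha\beta-ab|\lesssim\Lambda$ only pins $ab$ to an interval, and the degenerate case $ab=0$ (where one has $\sim N$ pairs rather than $N^{\epsilon}$) is unaddressed. More importantly, the quantity $\#\{\mathbf k:2ab=n,\,|k_j|\leq N\}\sim N\cdot d(|n|)$ you propose to bound carries the free $k_4$-sum inside it; that factor of $N$ must be absorbed into the $\ell^2_{k}$ part of $\|\phi\|_{L^2}$ via Cauchy--Schwarz, not counted. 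If you carry out the Cauchy--Schwarz correctly---pairing two of the $\hat\phi$'s and taking a supremum over the output frequency $(\tau,\xi,n)$---you land precisely on the mixed measure displayed above, and the slab-area bound $\Lambda\log N$ you already flag in your obstacle section \emph{is} the actual source of the loss. The divisor input is then a detour rather than the mechanism.
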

 In \cite{HST25}, there is an alternative proof of the $L^4$ Strichartz estimate \eqref{L4witheps} based on decoupling estimates due to Guth-Maldague-Oh \cite{GMO24}. Moreover, it is worth noting that after our paper was published, Deng-Fan-Zhao \cite{DFZ25} removed the $\epsilon$-loss in the estimate \eqref{L4witheps} by a proof based on the combination of a kernel decomposition method with measure estimates for semi-algebraic sets. Hence, using the sharp Strichartz estimate, they established the global well-posedness for the cubic HNLS \eqref{eq:HNLS} in $L^2$ with small data, which is left from Theorem \ref{maintm}. Note that interpolating the estimate \eqref{L4witheps} and the following straightforward estimate
 \begin{align*}
    \Vert e^{it\Box_{\mathbf{x}}}P_{ \leq N}\phi\Vert_{L^{\infty}_{t,\mathbf{x}}([0,1]\times\mathcal{M})}\lesssim N \Vert \phi\Vert_{L^2_{\mathbf{x}}(\mathcal{M})}
\end{align*}
yields the following non-scale invariant estimates  
\begin{corollary}\label{maincorofthm}
 For all $N\geq 1$, $p\geq 4$, and any $\epsilon>0$, we have
\begin{align*}\label{Lpwitheps}
\Vert e^{it\Box_{\mathbf{x}}}P_{\leq N}\phi\Vert_{L^p_{t,x}([0,1]\times \mathcal{M})}\lesssim_{\epsilon} N^{1-\frac{4}{p}+\epsilon}\Vert P_{\leq N}\phi\Vert_{L^2(\mathcal{M})}.
\end{align*}
\end{corollary}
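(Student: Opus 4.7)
The plan is to follow the classical Bourgain $L^4$ Strichartz strategy on the Fourier side, adapted to the mixed discrete/continuous geometry of $\mathbb{R}\times\mathbb{T}$ and to the indefinite symbol $\xi^2-k^2$, with the $N^\epsilon$ factor emerging from a logarithmic summation over a dyadic case analysis of the Fourier variables.

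First I would use the identity $\|u\|_{L^4}^2=\||u|^2\|_{L^2}$ for $u=e^{it\Box}P_{\leq N}\phi$ to reduce the claim to an $L^2$ bound on $|u|^2$ over $[0,1]\times\mathcal{M}$. Multiplying by a smooth time cutoff $\chi(t)$ equal to $1$ on $[0,1]$ and compactly supported, I can work on the full space $\mathbb{R}\times\mathcal{M}$ and apply Plancherel in all three variables $(t,x,y)$. The space-time Fourier transform of $\chi(t)|u|^2$ takes the form
\[
\mathcal{F}_{t,x,y}(\chi|u|^2)(\tau,\xi,k)=\sum_{k_1}\int \widehat{\chi}(\tau+\Phi)\,\widehat{\phi}(\xi_1,k_1)\overline{\widehat{\phi}(\xi_1-\xi,k_1-k)}\,d\xi_1,
\]
where $\xi_1,k_1$ are restricted by the cutoff $P_{\leq N}$ to $|\xi_1|,|\xi_1-\xi|,|k_1|,|k_1-k|\leq N$, and after centering $\xi_2=\xi_1-\xi$, $k_2=k_1-k$ the phase is the linear expression
\[
\Phi(\xi_1,k_1;\xi,k)=2\xi\xi_1-\xi^2-2kk_1+k^2.
\]

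Next, Cauchy--Schwarz in the $(\xi_1,k_1)$ integration/summation together with the rapid decay of $\widehat{\chi}$ reduces everything to the uniform estimate
\[
\sup_{(\tau,\xi,k)\in\mathbb{R}\times\mathbb{R}\times\mathbb{Z}}\mathcal{N}(\tau,\xi,k)\lesssim N^{2\epsilon},
\]
where $\mathcal{N}(\tau,\xi,k)$ is the product of Lebesgue measure in $\xi_1$ and counting measure in $k_1$ of the set
\[
\mathcal{S}_{\tau,\xi,k}:=\{(\xi_1,k_1)\in[-N,N]\times(\mathbb{Z}\cap[-N,N]):(\xi_1-\xi,k_1-k)\in[-N,N]^2,\ |\tau+\Phi|\lesssim 1\}.
\]
Since $\nabla_{(\xi_1,k_1)}\Phi=(2\xi,-2k)$, the set $\mathcal{S}_{\tau,\xi,k}$ is a thin strip of width $O(|(\xi,k)|^{-1})$ transverse to $(\xi,-k)$ in the $(\xi_1,k_1)$-plane. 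I would then dyadically decompose in $|\xi|$ and $|k|$ and split into the two regimes $|\xi|\gtrsim|k|$ and $|k|\gtrsim|\xi|$, bounding the strip-intersected-with-box measure in each case by elementary geometry, exploiting the continuity in $\xi_1$ and the discreteness of $k_1$. These dyadic pieces are expected to be the ``sets'' referenced in the remark following Proposition~\ref{removeglobalstric}. Finally, interpolating the resulting $L^4$ bound with the trivial $L^\infty$ Bernstein estimate yields Corollary~\ref{maincorofthm}.

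The main obstacle is the hyperbolic signature: the resonance curves $\Phi=\tau$ are non-compact hyperbolas, so even a thin strip can intersect $[-N,N]^2$ in a macroscopically long segment. The delicate regimes are the degenerate ones: when $k=0$ the function is $y$-independent and the problem reduces to a 1D Schr\"odinger evolution on $\mathbb{R}$, where one must exploit the continuity of $\xi_1$ to defeat the free $k_1$-summation; when $|\xi|$ is very small the strip becomes nearly parallel to the $\xi_1$-axis; and when $|\xi|\sim|k|$ the strip approaches the asymptotic directions of the hyperbola. Bounding $\mathcal{N}$ uniformly across these regimes, and verifying that only a logarithmic number of dyadic scales contribute, is the technical core that produces the $N^\epsilon$ loss.
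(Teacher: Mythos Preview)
Your overall architecture is right and matches the paper: the Corollary follows from the $L^4$ estimate (Theorem~\ref{mainthmL4est}) by interpolation with the trivial Bernstein bound $\|e^{it\Box}P_{\leq N}\phi\|_{L^\infty_{t,\mathbf{x}}}\lesssim N\|\phi\|_{L^2}$, exactly as you say in your last sentence. The content is therefore entirely in the $L^4$ step, and that is where your proposal breaks.

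The gap is the choice to expand $|u|^2=u\bar u$ rather than $u^2$. With $u\bar u$ the phase
\[
\Phi(\xi_1,k_1;\xi,k)=2\xi\xi_1-\xi^2-2kk_1+k^2
\]
is \emph{linear} in $(\xi_1,k_1)$, and your claimed reduction ``$\sup_{(\tau,\xi,k)}\mathcal N(\tau,\xi,k)\lesssim N^{2\epsilon}$'' is simply false: at $(\tau,\xi,k)=(0,0,0)$ the phase vanishes identically, the constraint $|\tau+\Phi|\lesssim 1$ is empty, and $\mathcal S_{0,0,0}=[-N,N]\times(\mathbb Z\cap[-N,N])$ has product measure $\sim N^2$. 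More generally, whenever $|(\xi,k)|\lesssim N^{-1}$ the strip has width $\gtrsim N$ and $\mathcal N\sim N^2$. You flag ``$|\xi|$ very small'' as delicate, but no amount of geometry of the strip will rescue a supremum that equals $N^2$; the Cauchy--Schwarz step you wrote down is too crude here. The degeneracy is intrinsic to the linear phase coming from $u\bar u$.

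The paper avoids this by taking $\|u\|_{L^4}^2=\|u^2\|_{L^2}$ instead. Then the phase is quadratic, $\xi_1^2+(\xi-\xi_1)^2-n_1^2-(n-n_1)^2$, and after completing the square and translating in $\xi_1$ one is reduced to bounding the measure of
\[
B_N(\tau,n)=\big\{(\xi_1,n_1)\in\mathbb R\times\mathbb Z:\ |\tau+\xi_1^2-(n-n_1)^2|\le C,\ |n_1|,|n-n_1|\sim N\big\}.
\]
For fixed $n_1$ the $\xi_1$-measure is $\lesssim |\tau-(n-n_1)^2|^{-1/2}$, and a dyadic summation in the level of $(n-n_1)^2-\tau$ gives $\mathrm{mes}(B_N)\lesssim\log N$ uniformly. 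This is where the $N^\epsilon$ comes from; the nondegeneracy of the quadratic phase in $\xi_1$ is what replaces your failed strip-width bound. If you switch your expansion from $|u|^2$ to $u^2$, the rest of your outline (Plancherel, Cauchy--Schwarz, level-set counting, then interpolation) goes through.
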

\begin{proof}[Proof of Theorem \ref{mainthmL4est}]
After some classical reductions  (see e.g. \cite{HTT14}, page~75 or \cite{TT}) based on the Littlewood-Paley theory and the use of the Cauchy-Schwarz inequality, the issue is to bound 
$$
{\rm mes}(A_{N}(\tau,\xi,n)),
$$ 
where ${\rm mes}$ refers to the product of the Lebesgue measure on $\R$ and the counting measure on $\Z$ and $A_{N}(\tau,\xi, n)$ is defined by 
\begin{multline*}
A_N(\tau, \xi, n)=
\big\{
(\xi_1,n_1)\in \R\times\Z\,:\, 
|\tau+\xi_1^2+(\xi-\xi_1)^2-n_1^2-(n-n_1)^2|\leq C,\\
 |n_1|\leq 2N,\,\,\, |n-n_1|\leq 2N
\big\},
\end{multline*}
where $\tau, \xi\in \R$, $n\in \Z$ and $N$ is a dyadic integer.  We can write
$$
n_1^2+(n-n_1)^2=2\big(n_1-\frac{n}{2}\big)^2+\frac{n^2}{2}
$$
and similarly for the expression involving $\xi_1,\xi$. Using the invariance of the Lebesgue measure under translations, we obtain that the issue boils down to bound 
\begin{equation}\label{3}
{\rm mes}(B_{N}(\tau,n)),
\end{equation}
where $B_{N}(\tau,n)$ is defined by 
\begin{align*}
B_N(\tau, n)=
\big\{
(\xi_1,n_1)\in \R\times\Z\,:\, 
|\tau+\xi_1^2-(n-n_1)^2|\leq C,\, |n_1|\leq 2N,\,\,\, |n-n_1|\leq 2N
\big\},
\end{align*}
where $\tau, n\in \R$ and $N$ is a dyadic integer.  
Set 
$$
A=\tau-(n-n_1)^2\,.
$$
We know that
$$
-C\leq A+\xi_1^2\leq C
$$
and as a consequence $A\leq C$. If in addition $A\geq -100 C$ then the number of possible $n_1$ is $O(1)$.
Moreover, in the case $A\geq -100 C$, we have
$$
\xi_1^2\leq -A+C\leq 101 C
$$
and therefore the measure of the possible $\xi_1$ is $O(1)$. Consequently when $A\geq -100 C$  we have that \eqref{3} is  $O(1)$.

Therefore, from now on we can suppose that $A\leq -100 C$. For a fixed $n_1$, the measure of the possible $\xi_1$ is
$$
\sqrt{C-A}-\sqrt{-C-A}=\frac{2C}{\sqrt{C-A}+\sqrt{-C-A}}\lesssim \frac{1}{\sqrt{-A}}\,.
$$
Therefore, we can write
\begin{equation*}
{\rm mes}(B_{N}(\tau,n))
\lesssim
1+\sum_{\substack{|n_1|\leq 2N \\  |n-n_1|\leq 2N \\ (n-n_1)^2-\tau\geq 100C}}
\frac{1}{\big( (n-n_1)^2-\tau\big)^{\frac{1}{2}}}\,.
\end{equation*}
We now claim that the last sum is $\lesssim \log(N)$. 
Indeed, the contribution of 
$$
 (n-n_1)^2-\tau\geq N^2
 $$
 is bounded by $O(1)$. 
 Next, for $L\geq 1$, we have that
 $$
 \#\{n_1\in\Z\,:\,
 (n-n_1)^2-\tau\in [L,2L]
\}
\lesssim \sqrt{L}\,.
$$
Therefore the contribution of the region 
$$
 (n-n_1)^2-\tau\leq N^2
 $$
 can be bounded by
 $$
 \sum_{\substack{
 L-{\rm dyadic}
 \\
 L\lesssim N^2
 }}
 \frac{\sqrt{L}}{\sqrt{L}}\lesssim \log(N)\,.
 $$
The assertion follows and the proof is complete.
\end{proof}
\subsection{The $\epsilon$-Removal Argument}\label{epsilonremoval}
In this subsection, we will prove Proposition \ref{epsilonremovalprop}. Note that by a hyperbolic-type Galilean transformation as in \cite{YW13_2}, Proposition \ref{epsilonremovalprop} leads to the corollary in the sequel. First, we denote by $\mathscr{C}_N$ the collection of cubes $C\subset \mathbb{R}\times\mathbb{Z}$ of side length $N\geq 1$ with arbitrary center and orientation.
\begin{corollary}\label{stricharrtzcor}
Let $0< T \leq 1$. For all $C\in \mathscr{C}_N$ and $p>4$, we have
\begin{equation*}\label{StricartzwithC}
\begin{aligned}
  \Vert P_Ce^{it\Box}\phi\Vert_{L^p([0,T]\times \mathcal{M})} \lesssim N^{1-\frac{4}{p}}\Vert P_C\phi\Vert_{L^2(\mathcal{M})}. 
\end{aligned}
\end{equation*}
\end{corollary}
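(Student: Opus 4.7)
The plan is to reduce a general cube $C \in \mathscr{C}_N$ to the origin-centered situation already covered by Proposition \ref{epsilonremovalprop}, by exploiting a Galilean-type symmetry of the flow $e^{it\Box}$, exactly as in the cited work YW13\_2. The point is that once $C$ is translated to a neighborhood of the origin in frequency space, the Littlewood--Paley projector $P_{\leq N}$ acts as the identity on the shifted data, and the scale-invariant bound \eqref{BDstrcest2} applies directly.

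First I would introduce, for each $(a, b) \in \mathbb{R} \times \mathbb{Z}$, the modulation operator $M_{a,b}\phi(x,y) := e^{i(ax+by)}\phi(x, y)$. It is unitary on $L^2(\mathcal{M})$ and well-defined on $\mathcal{M} = \mathbb{R}\times\mathbb{T}$ precisely because $b$ is an integer. A direct Fourier-side computation using $\Box = \partial_x^2 - \partial_y^2$ and expanding $(\xi+a)^2 - (k+b)^2$ yields the intertwining identity
\begin{equation*}
e^{it\Box}\bigl[M_{a,b}\phi\bigr](x, y) \;=\; e^{it(b^2 - a^2)}\, e^{i(ax+by)}\, \bigl[e^{it\Box}\phi\bigr](x - 2at,\, y + 2bt).
\end{equation*}
Since both phase factors have modulus one, and both Lebesgue measure on $\mathbb{R}$ and Haar measure on $\mathbb{T}$ are translation invariant, the modulation preserves the spacetime $L^p$ norm on $[0,T]\times \mathcal{M}$, while $\|M_{a,b}\phi\|_{L^2}=\|\phi\|_{L^2}$ is automatic.

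Given $C \in \mathscr{C}_N$ centered at $(a, b) \in \mathbb{R} \times \mathbb{Z}$, I would then choose integer part $b$ (and real $a$) so that the shifted cube $C - (a, b)$ lies inside $\{|\xi| \leq N,\ |k| \leq N\}$; the function $M_{-a,-b} P_C \phi$ has Fourier support in this origin-centered cube, on which $P_{\leq N}$ acts as the identity. Proposition \ref{epsilonremovalprop} then gives
\begin{equation*}
\bigl\|e^{it\Box} M_{-a,-b}P_C\phi\bigr\|_{L^p_{t,\mathbf{x}}([0,1]\times\mathcal{M})} \lesssim N^{1 - 4/p}\, \bigl\|M_{-a,-b}P_C\phi\bigr\|_{L^2(\mathcal{M})}.
\end{equation*}
Applying the two isometries of $M_{\pm a,\pm b}$ recorded above and the trivial inclusion $[0,T]\subset[0,1]$ transfers this bound to $P_C\phi$, producing the claimed estimate. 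Cubes of non-axis-aligned orientation in the lattice $\mathbb{R}\times\mathbb{Z}$ are dealt with by the discrete symmetry $(x,y)\mapsto(y,x)$, $t\mapsto -t$, under which $\Box$ flips sign and the Strichartz norm is preserved.

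No analytic difficulty arises beyond Proposition \ref{epsilonremovalprop}; the only technical point is the quantization constraint $b \in \mathbb{Z}$ imposed by the compact direction, which is compatible with the hypothesis $C \subset \mathbb{R}\times\mathbb{Z}$. The main obstacle is therefore purely book-keeping: writing the Galilean identity above cleanly and verifying that $M_{-a,-b}P_C\phi$ indeed lands in the Littlewood--Paley range where \eqref{BDstrcest2} can be invoked.
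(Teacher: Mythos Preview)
Your approach is correct and is precisely the hyperbolic-type Galilean transformation the paper invokes (citing \cite{YW13_2}) to deduce the corollary from Proposition~\ref{epsilonremovalprop}. One small correction: your final remark about handling non-axis-aligned cubes via $(x,y)\mapsto(y,x)$ is not well-defined on $\mathcal{M}=\mathbb{R}\times\mathbb{T}$ (the two factors are different spaces), but it is also unnecessary---a cube of side $N$ in any orientation, once translated to be centered at the origin, still sits inside $\{|\xi|\le N,\ |k|\le N\}$, so your modulation argument already covers that case.
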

For $\mathbf{x}=(x,y)$, we shall denote the truncated convolution kernel corresponding to the hyperbolic propagator as follows
\begin{equation}\label{kernel}
\begin{aligned}
 K_N(t,\mathbf{x})&:=[e^{it\Box}P_{\leq N}\delta_0](\mathbf{x})\\&=\Bigg[\int_{\mathbb{R}}\varphi(\xi/N)e^{2\pi i(\xi x-t\xi^2)}d\xi\Bigg]\cdot\Bigg[\sum_{k\in\mathbb{Z}}\varphi(k/N)e^{2\pi i(ky+tk^2)}\Bigg]\\&=:K_{NI}(t,x)\otimes K_{NS}(t,y).  
 \end{aligned}
 \end{equation}
\begin{proof}[Proof of Proposition \ref{epsilonremovalprop}]
Let $\Vert f\Vert_{L^2(\mathcal{M})}=1$. Thus via Bernstein's inequality, we have
\begin{align*}
    \Vert e^{it\Box}P_{\leq N}f\Vert_{L^{\infty}_{t,\mathbf{x}}([0,1]\times\mathcal{M})}\leq CN
\end{align*}
for some $C>0$. Fix $p>4$, then using the inequality above we may write
\begin{align*}
 \Vert e^{it\Box}P_{\leq N}f\Vert_{L^{p}_{t,\mathbf{x}}([0,1]\times \mathcal{M})}^p=\int_0^{CN}p\mu^{p-1}\big|\{(t,\mathbf{x})\in [0,1]\times\mathcal{M}:|(e^{it\Box}P_{\leq N}f)(\mathbf{x})|>\mu\}\big|d\mu.\end{align*}
For small $\delta>0$ to be determined later and $\epsilon\leq \frac{\delta(p-4)}{4}$, we apply Chebyshev's inequality and Theorem \ref{mainthmL4est} to obtain
 \begin{equation}\label{firstest}
\begin{aligned}
 \int_0^{N^{1-\delta}}&p\mu^{p-1}\big|\{(t,\mathbf{x})\in [0,1]\times\mathcal{M}:|(e^{it\Box}P_{\leq N}f)(\mathbf{x})|>\mu\}\big|d\mu \\&\lesssim N^{4\epsilon}\int_0^{N^{1-\delta}}\mu^{p-5}d\mu\\&\lesssim N^{p(1-\frac{4}{p})+4\epsilon -\delta(p-4)}\lesssim N^{p(1-\frac{4}{p})}.  
\end{aligned}
\end{equation}
Next, for fixed $\mu>N^{1-\delta}$, let us denote 
\begin{align*}
    \Omega=\{(t,\mathbf{x})\in [0,1]\times\mathcal{M}:|(e^{it\Box}P_{\leq N}f)(\mathbf{x})|>\mu\}
\end{align*}
and write
\begin{align*}
I:= \int_{N^{1-\delta}}^{CN}p\mu^{p-1}\big|\Omega\big|d\mu. 
\end{align*}
To control $I$, we introduce the set \begin{align*}
    \Omega_{\omega}=\{(t,\mathbf{x})\in [0,1]\times\mathcal{M}:\text{Re}(e^{i\omega}e^{it\Box}P_{\leq N}f)(\mathbf{x})>\frac{\mu}{2}\}
\end{align*}
that satisfies $|\Omega|\leq 4|\Omega_{\omega}|$ for some $\omega\in \{0, \frac{\pi}{2}, \pi, \frac{3\pi}{2}\}$. So we need to estimate $\Omega_{\omega}$. For this purpose, employing Cauchy-Schwarz inequality, it follows that
\begin{equation}\label{Omega_omegaest}
\begin{aligned}
\mu^2|\Omega_{\omega}|^2&\lesssim \langle e^{it\Box}P_{\leq N}f, \mathds{1}_{\Omega_{\omega}} \rangle_{L^2_{t,\mathbf{x}}}^2\\&=\langle f, e^{-it\Box}P_{\leq N}\mathds{1}_{\Omega_{\omega}} \rangle_{L^2_{t,\mathbf{x}}}^2\\&\lesssim \langle \mathds{1}_{\Omega_{\omega}}, K_N*\mathds{1}_{\Omega_{\omega}}\rangle_{L^2_{t,\mathbf{x}}}.
\end{aligned}
\end{equation}
The following bound for the kernel $K_{N}$ will be of fundamental importance in controlling the resulting bound in \eqref{Omega_omegaest}.
\begin{lemma}\label{kerestlem}
For $N\geq 1$, $\mathbf{x}\in\mathcal{M}$, we have 
\begin{align}\label{kernelest}
    |K_N(t,\mathbf{x})|\lesssim\begin{cases}N|t|^{-\frac{1}{2}},\quad \text{for all}\,\,t\in\mathbb{R},\\
     |t|^{-1},\qquad \text{for}\,\,|t|\leq N^{-1}.
    \end{cases}
    \end{align}
\end{lemma}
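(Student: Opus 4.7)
The plan is to exploit the product structure $K_N = K_{NI}\otimes K_{NS}$ recorded in \eqref{kernel} and estimate each factor separately. For each factor I will produce two bounds: the trivial $O(N)$ bound coming from the Fourier-side support, and a dispersive $O(|t|^{-1/2})$ bound (valid for all $t\neq 0$ in the Euclidean case, and only for $|t|\leq N^{-1}$ in the periodic case). Multiplying the Euclidean dispersive bound against the trivial periodic bound will give the global estimate $N|t|^{-1/2}$, while multiplying the two dispersive bounds will give $|t|^{-1}$ on $|t|\leq N^{-1}$.

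First I treat the Euclidean factor $K_{NI}(t,x)=\int_\R \varphi(\xi/N)e^{2\pi i(\xi x - t\xi^2)}d\xi$. The trivial bound $|K_{NI}(t,x)|\lesssim N$ follows from $|\varphi(\xi/N)|\leq 1$ and $\mathrm{supp}\,\varphi(\cdot/N)\subset[-2N,2N]$. For the dispersive estimate I would observe that $K_{NI}(t,\cdot)=e^{it\partial_x^2}g_N$ where $g_N(x)=N(\mathcal{F}^{-1}\varphi)(Nx)$ satisfies $\|g_N\|_{L^1(\R)}\lesssim 1$; the standard one-dimensional $L^1\to L^\infty$ dispersive estimate for the free Schr\"odinger group then gives $|K_{NI}(t,x)|\lesssim |t|^{-1/2}$ uniformly in $x$ and $N$. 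Equivalently, completing the square $\xi x-t\xi^2 = -t(\xi-x/(2t))^2 + x^2/(4t)$ and using the Fresnel integral yields the same bound.

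Next I treat the periodic factor $K_{NS}(t,y)=\sum_{k\in\Z}\varphi(k/N)e^{2\pi i(ky+tk^2)}$. The trivial bound $|K_{NS}(t,y)|\lesssim N$ is immediate since at most $O(N)$ terms are nonzero. For the small-time dispersive bound I would invoke \cite[Theorem 5.3]{KPV91}, which gives $|K_{NS}(t,y)|\lesssim |t|^{-1/2}$ in the range $|t|\leq N^{-1}$. If one preferred a self-contained derivation, Poisson summation rewrites
\begin{equation*}
K_{NS}(t,y)=\sum_{m\in\Z}\int_\R \varphi(\xi/N)e^{2\pi i(\xi(y+m)+t\xi^2)}d\xi,
\end{equation*}
so each summand is bounded by $C|t|^{-1/2}$ by the argument above, and repeated integration by parts in $\xi$ (equivalently, non-stationary phase away from $\xi^*=-(y+m)/(2t)$) produces arbitrary polynomial decay in $|y+m|/(N|t|)$. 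For $|t|\leq N^{-1}$ the stationary point lies in $[-N,N]$ only when $|y+m|\lesssim 1$, so at most $O(1)$ terms contribute, yielding $|K_{NS}(t,y)|\lesssim |t|^{-1/2}$.

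Combining gives the two claims: $|K_N(t,\mathbf{x})|\leq |K_{NI}(t,x)|\,|K_{NS}(t,y)|\lesssim |t|^{-1/2}\cdot N$ for all $t$, and $|K_N(t,\mathbf{x})|\lesssim |t|^{-1/2}\cdot |t|^{-1/2}=|t|^{-1}$ in the range $|t|\leq N^{-1}$. The only nontrivial step is the small-time dispersive decay for $K_{NS}$; this is the point where the $|t|\leq N^{-1}$ constraint is essential, since outside this range the stationary phase can place many lattice shifts $y+m$ inside the frequency window and the $|t|^{-1/2}$ decay is lost. Everything else reduces to standard Euclidean Schr\"odinger estimates plus the separation of variables afforded by the tensor structure of the linear symbol $\Box=\partial_x^2-\partial_y^2$.
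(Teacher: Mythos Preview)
Your proof is correct and follows essentially the same approach as the paper: both exploit the tensor structure $K_N=K_{NI}\otimes K_{NS}$, use the standard Euclidean dispersive bound $|K_{NI}(t,x)|\lesssim |t|^{-1/2}$, the trivial bound $|K_{NS}|\lesssim N$, and the small-time dispersive bound $|K_{NS}(t,y)|\lesssim |t|^{-1/2}$ for $|t|\leq N^{-1}$ from \cite[Theorem~5.3]{KPV91}. Your additional Poisson-summation sketch for the periodic factor is a nice self-contained alternative (and is essentially the mechanism behind the KPV estimate), though the paper simply cites the reference.
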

\begin{proof}
It is well-known that 
\begin{align}\label{dispestR}
|K_{NI}(t,x)|\lesssim |t|^{-\frac{1}{2}},\quad \forall t\in \mathbb{R}.    
\end{align} 
 Thus, by \eqref{kernel}, the estimate \eqref{dispestR} and the straightforward bound $| K_{NS}|\lesssim N$ give rise to the first estimate in \eqref{kernelest} which holds for all times. Then, due to the estimate $(5.9)$ in the proof of \cite[Theorem $5.3$]{KPV91}, we also get the bound 
\begin{align}\label{dispestT}
   |K_{NS}(t,y)|\lesssim |t|^{-\frac{1}{2}},\quad \text{for}\,\,|t|\leq N^{-1}.
\end{align}
 Hence, by \eqref{kernel}, \eqref{dispestR} and \eqref{dispestT} imply the second estimate in \eqref{kernelest}.
\end{proof}
\begin{remark}
The analogue of the estimate $(5.9)$ in \cite{KPV91} or \eqref{dispestT} also holds for any compact Riemannian manifold without boundary; see \cite[Lemma $2.5$]{BGT04}. Intuitively, the wave packets at frequency scale $N$ travel at group velocity $\sim N$. Therefore, within the time scale $O(N^{-1})$, there is no formation of caustics in the WKB approximation, which is responsible for the same dispersive estimate as the case of $\mathbb{R}^d$.   
\end{remark}
In view of Lemma \ref{kerestlem}, we set the short-time interval
\begin{align*}
  I_N:=[0,N^{-1}],\quad\text{for}\,\, N\geq 1,  
\end{align*} 
and define 
\begin{align*}
K_{N,0}(t,\mathbf{x}):=\mathds{1}_{I_N}(t)K_N(t,\mathbf{x}).
\end{align*} Hence using \eqref{kernelest}, we have
\begin{align*}
    |K_N(t,\mathbf{x})-K_{N,0}(t,\mathbf{x})|\lesssim Nt^{-\frac{1}{2}}\leq N^{\frac{3}{2}},\quad \forall t\in [0,1]\setminus I_N.
\end{align*}
This, together with Hölder's inequality and Young's inequality, gives rise to the bound
\begin{equation}\label{K_N-K_Ntilde}
\begin{aligned}
 |\langle \mathds{1}_{\Omega_{\omega}},(K_N-K_{N,0})*\mathds{1}_{\Omega_{\omega}}\rangle_{L^2_{t,\mathbf{x}}}|&\lesssim \Vert \mathds{1}_{\Omega_{\omega}}\Vert_{L^1_{t,\mathbf{x}}}\Vert (K_N-K_{N,0})*\mathds{1}_{\Omega_{\omega}}\Vert_{L^{\infty}_{t,\mathbf{x}}} \\&\lesssim |\Omega_{\omega}|^2N^{\frac{3}{2}}. 
\end{aligned}
\end{equation}
By Lemma \ref{kerestlem}, we have
$$ |K_{N,0}(t,\mathbf{x})|\lesssim \epsilon_N(t):=\mathds{1}_{I_N}(t)\min\{Nt^{-\frac{1}{2}},t^{-1}\}.
$$
Notice that, when $0<t<N^{-2}$, we have $\epsilon_N(t)=Nt^{-\frac{1}{2}}$, and for $N^{-2}\leq t\leq N^{-1}$, we have $\epsilon_N(t)=t^{-1}$. It is worth noting that though \cite[Proposition $2.3$]{KV14} is applicable in our analysis in the rest (when approximating kernel is defined over $\mathcal{T}$, see \eqref{majorarc}), here, we present a straightforward proof thanks to the decay in Lemma \ref{kerestlem}.
\begin{lemma}\label{kernlestlemma}
Fix $r\in (4,6)$, then we have
$$ \|K_{N,0}\ast F\|_{L_{t,\mathbf{x}}^r([0,1]\times\mathcal{M} )}\lesssim N^{2-\frac{8}{r}}\|F\|_{L_{t,\mathbf{x}}^{r'}([0,1]\times\mathcal{M})}.
$$
\end{lemma}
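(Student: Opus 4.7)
My plan is a two-step argument: first interpolate two spatial estimates to reduce the spacetime convolution to a one-variable convolution in time, then apply Young's inequality and compute the resulting $L^{r/2}$-norm in time. The two spatial endpoints I will start from are, for each fixed time lag $\sigma\in\mathbb{R}$:
\begin{itemize}
    \item (Dispersive endpoint) From the pointwise bound $|K_{N,0}(\sigma,\mathbf{x})|\leq \epsilon_N(\sigma)$ proved via Lemma \ref{kerestlem}, one immediately has
    $$
    \|K_{N,0}(\sigma,\cdot)\ast_{\mathbf{x}} G\|_{L^\infty_\mathbf{x}(\mathcal{M})}\lesssim \epsilon_N(\sigma)\|G\|_{L^1_\mathbf{x}(\mathcal{M})}.
    $$
    \item ($L^2$ endpoint) Since $K_N(\sigma,\cdot)\ast_{\mathbf{x}} G= e^{i\sigma\Box}P_{\leq N}G$ is the composition of a unitary and a spatial Fourier multiplier of size $\leq 1$, Plancherel gives
    $$
    \|K_{N,0}(\sigma,\cdot)\ast_{\mathbf{x}} G\|_{L^2_\mathbf{x}(\mathcal{M})}\leq \mathds{1}_{I_N}(\sigma)\|G\|_{L^2_\mathbf{x}(\mathcal{M})}.
    $$
\end{itemize}
Riesz-Thorin interpolation at the exponent $r\in(4,6)$ (with $\theta=2/r$) combines these into
$$
\|K_{N,0}(\sigma,\cdot)\ast_{\mathbf{x}} G\|_{L^r_\mathbf{x}(\mathcal{M})}\lesssim \mathds{1}_{I_N}(\sigma)\,\epsilon_N(\sigma)^{1-2/r}\|G\|_{L^{r'}_\mathbf{x}(\mathcal{M})}.
$$

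\textbf{Reduction via Young's in time.} Applying this bound with $G=F(s,\cdot)$ and $\sigma=t-s$, and then invoking Minkowski's inequality inside the spatial norm, I obtain
$$
\|(K_{N,0}\ast F)(t,\cdot)\|_{L^r_\mathbf{x}(\mathcal{M})}\lesssim \int_\mathbb{R} h(t-s)\,\|F(s,\cdot)\|_{L^{r'}_\mathbf{x}(\mathcal{M})}\, ds, \qquad h(\sigma):=\mathds{1}_{I_N}(\sigma)\,\epsilon_N(\sigma)^{1-2/r}.
$$
Taking the $L^r_t$-norm on the left and using Young's convolution inequality with exponents $1+\tfrac{1}{r}=\tfrac{1}{a}+\tfrac{1}{r'}$ forces $a=r/2$, and I am left with
$$
\|K_{N,0}\ast F\|_{L^r_{t,\mathbf{x}}}\lesssim \|h\|_{L^{r/2}_t}\|F\|_{L^{r'}_{t,\mathbf{x}}}.
$$
Thus the entire estimate reduces to showing $\|h\|_{L^{r/2}_t}\lesssim N^{2-8/r}$.

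\textbf{Computation of $\|h\|_{L^{r/2}_t}$.} Writing $\gamma:=r/2-1\in(1,2)$ and using the explicit form
$\epsilon_N(t)=Nt^{-1/2}$ on $(0,N^{-2})$, $\epsilon_N(t)=t^{-1}$ on $[N^{-2},N^{-1}]$, one has
$$
\|h\|_{L^{r/2}_t}^{r/2}=\int_0^{N^{-2}}(Nt^{-1/2})^\gamma dt+\int_{N^{-2}}^{N^{-1}}t^{-\gamma} dt.
$$
Both integrals evaluate to a constant multiple of $N^{2\gamma-2}=N^{r-4}$; raising to the power $2/r$ yields exactly $N^{2-8/r}$, as required. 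The restriction $r\in(4,6)$ enters precisely at this step: the condition $r>4$ (i.e.\ $\gamma>1$) prevents a logarithmic divergence in the second integral near $t\sim N^{-2}$, while $r<6$ (i.e.\ $\gamma<2$) ensures integrability of the first integral at $t=0$. The main subtlety — and the reason one cannot simply apply Young's inequality in spacetime — is that the only available kernel bound is in the time variable alone ($|K_{N,0}|\lesssim \epsilon_N(t)$, uniform in $\mathbf{x}$), so it is genuinely necessary to dualize to the $L^1_\mathbf{x}\to L^\infty_\mathbf{x}$ picture and balance it against the unitary $L^2_\mathbf{x}\to L^2_\mathbf{x}$ estimate before performing any time-convolution bound.
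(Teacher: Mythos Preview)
Your proof is correct and follows essentially the same route as the paper: interpolate the $L^1_\mathbf{x}\to L^\infty_\mathbf{x}$ dispersive bound against the $L^2_\mathbf{x}\to L^2_\mathbf{x}$ unitary bound to obtain the $L^{r'}_\mathbf{x}\to L^r_\mathbf{x}$ estimate with weight $\epsilon_N(\sigma)^{1-2/r}$, then reduce to a one-dimensional convolution in time and compute the same integral $\int_{I_N}\epsilon_N(t)^{r/2-1}\,dt\lesssim N^{r-4}$. The only cosmetic difference is that you invoke Young's convolution inequality with exponent $a=r/2$ (which is the natural tool here), whereas the paper phrases the final step as a ``Schur test'' and records the constant as $\|\epsilon_N\|_{L^{r/2-1}}^{1-2/r}$; since $\|h\|_{L^{r/2}}=\|\epsilon_N\|_{L^{r/2-1}}^{1-2/r}$, the two computations are identical.
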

\begin{proof}
We follow the standard argument as performed for the proof of the Strichartz inequality in $\mathbb{R}^d$. Note that for a space-time function $F$, 
$$ K_{N,0}\ast F(t)=\int_{0}^1 K_{N,0}(t-s)\ast_{\mathbf{x}} F(s) ds,
$$
where $\ast_{\mathbf{x}}$ means the convolution for the spatial variable $\mathbf{x}\in\mathcal{M}$.
 From the dispersive estimate, we have
 $$ \|K_{N,0}(t)\ast_{\mathbf{x}}\|_{L_{\mathbf{x}}^1\rightarrow L_{\mathbf{x}}^{\infty}}\lesssim \epsilon_N(t).
 $$
 Since $K_{N,0}(t)$ is the kernel of the truncated propagator $\mathds{1}_{I_N}(t)e^{it\Box}P_{\leq N}$, we have
 $$ \|K_{N,0}(t)\ast_{\mathbf{x}}\|_{L_{\mathbf{x}}^2\rightarrow L_{\mathbf{x}}^2}\leq 1.
 $$
 Then, Riesz interpolation gives
 $$ \|K_{N,0}(t)\ast_{\mathbf{x}}\|_{L_{\mathbf{x}}^{r'}\rightarrow L_{\mathbf{x}}^r }\lesssim \epsilon_N(t)^{1-\frac{2}{r}},\quad \forall 2<r<\infty.
 $$
Plugin, 
$$ \|K_{N,0}\ast F\|_{L_{t,\mathbf{x}}^r([0,1]\times\mathcal{M})}\lesssim \Big\|
\int_0^1  \big(\epsilon_N(t-s)\big)^{1-\frac{2}{r}} \|F(s)\|_{L_{\mathbf{x}}^{r'}} ds
\Big\|_{L_t^r([0,1])}.
$$
By Schur test, the above inequality is bounded by
$$ A_N\|F\|_{L_{t,\mathbf{x}}^{r'}},
$$
with
$$ A_N=\sup_{s\in[0,1]}\|\epsilon_N(t-s)\|_{L_t^{\frac{r}{2}-1}([0,1])}^{1-\frac{2}{r}}+\sup_{t\in[0,1]}\|\epsilon_N(t-s)\|_{L_t^{\frac{r}{2}-1}([0,1])}^{1-\frac{2}{r}}\leq 2\|\epsilon_N(t)\|_{L_t^{\frac{r}{2}-1}(I_N)}^{1-\frac{2}{r}}.
$$
Thus, for $r\in (4,6)$, direct computation yields
\begin{align*}
\|\epsilon_N(t)\|_{L_t^{\frac{r}{2}-1}(I_N)}^{\frac{r}{2}-1}\leq \int_0^{N^{-2}}(Nt^{-\frac{1}{2}})^{\frac{r}{2}-1} dt +\Big|\int_{N^{-2}}^{N^{-1}}t^{-(\frac{r}{2}-1)}dt\Big|
\lesssim N^{r-4}.
\end{align*}
Hence $A_N\lesssim N^{2-\frac{8}{r}}$. 

\end{proof}
Fix $4<r<\min\{6,p\}$. As a consequence of Lemma \ref{kernlestlemma}, we get
\begin{equation}\label{K_Ntilde}
\begin{aligned}
 |\langle \mathds{1}_{\Omega_{\omega}},K_{N,0}*\mathds{1}_{\Omega_{\omega}}\rangle_{L^2_{t,\mathbf{x}}}|&\lesssim \Vert \mathds{1}_{\Omega_{\omega}}\Vert_{L^{r'}_{t,\mathbf{x}}}\Vert K_{N,0}*\mathds{1}_{\Omega_{\omega}}\Vert_{L^{r}_{t,\mathbf{x}}} \\&\lesssim |\Omega_{\omega}|^{\frac{2}{r'}}N^{2-\frac{8}{r}}. 
\end{aligned}
\end{equation}
We choose $\delta\ll \frac{1}{4}$. Then, substituting the estimates \eqref{K_N-K_Ntilde} and \eqref{K_Ntilde} into \eqref{Omega_omegaest} entails that
\begin{align*}
  |\Omega_{\omega}|&\lesssim N^{\frac{r}{2}(2-\frac{8}{r})}\mu^{-r}
\end{align*}
where we have used the fact that the contribution of \eqref{K_N-K_Ntilde} is much smaller than $\mu^2|\Omega_{\omega}|^2$ under assumptions $\mu>N^{1-\delta}$ and $\delta\ll \frac{1}{4}$. As a consequence, for $4<r<\min\{6,p\}$, we obtain 
\begin{equation*}
\begin{aligned}
I&\leq 4\int_{N^{1-\delta}}^{CN}p\mu^{p-1}\big|\Omega_{\omega}\big|d\mu&\lesssim N^{\frac{r}{2}(2-\frac{8}{r})}\int_{N^{1-\delta}}^{CN}\mu^{p-r-1}d\mu\lesssim N^{p(1-\frac{4}{p})}.
\end{aligned}
\end{equation*}
Thus, from the above estimate and \eqref{firstest}, \eqref{BDstrcest2} follows.
\end{proof}
\subsection{Proof of Theorem \ref{meanzerothm}}\label{LSEthm1.8}
Set
$
u(t,x,y)= e^{it\partial_x\partial_y}P_{\leq N}f\,.
$
We have 
\begin{multline*}
u^2(t,x,y)=\sum_{n_1, n_2\in\mathbb{Z}}\,\,\int_{\mathbb{R}^2}e^{2\pi i((\xi_1 n_1+\xi_2 n_2)t+
(\xi_1+\xi_2) x+(n_1+n_2)y)}
\\
\varphi(\xi_1/N)\varphi(n_1/N)\,\varphi(\xi_2/N)\varphi(n_2/N)\,\widehat{f}(\xi_1,n_1)\widehat{f}(\xi_2,n_2)d\xi_1 d\xi_2 .
\end{multline*}
Write 
$$
u^2(t,x,y)=u_1(t,x,y)+u_2(t,x,y),
$$
where 
\begin{multline*}
u_1(t,x,y)=\sum_{n_1\neq n_2}\,\,\int_{\mathbb{R}^2}e^{2\pi i((\xi_1 n_1+\xi_2 n_2)t+
(\xi_1+\xi_2) x+(n_1+n_2)y)}
\\
\varphi(\xi_1/N)\varphi(n_1/N)\,\varphi(\xi_2/N)\varphi(n_2/N)\,\widehat{f}(\xi_1,n_1)\widehat{f}(\xi_2,n_2)d\xi_1 d\xi_2 
\end{multline*}
and
\begin{equation*}
u_2(t,x,y)=\sum_{n\in\mathbb{Z}}\,\,\int_{\mathbb{R}^2}e^{2\pi i((\xi_1+\xi_2) (x+nt)+2ny)}
\varphi(\xi_1/N)\,\varphi(\xi_2/N)\varphi^2(n/N)\,\widehat{f}(\xi_1,n)\widehat{f}(\xi_2,n)d\xi_1 d\xi_2.
\end{equation*}
We have that
\begin{multline*}
\|u_2(t,x,\cdot)\|^2_{L^2(\T)}=
\\
\sum_{n\in\mathbb{Z}}\,
\Big|
\int_{\mathbb{R}^2}e^{2\pi i((\xi_1+\xi_2) (x+nt))}
\varphi(\xi_1/N)\,\varphi(\xi_2/N)\varphi^2(n/N)\,\widehat{f}(\xi_1,n)\widehat{f}(\xi_2,n)d\xi_1 d\xi_2
\Big|^2\,.
\end{multline*}
Set
$$
F_n(t,x)=\int_{\mathbb{R}^2}e^{2\pi i((\xi_1+\xi_2) (x+nt))}\varphi(\xi_1/N)\,\varphi(\xi_2/N)\varphi^2(n/N)\,\widehat{f}(\xi_1,n)\widehat{f}(\xi_2,n)d\xi_1 d\xi_2.
$$
We need to evaluate  $\| F_n\|_{L^2([0,1]\times\R)}.$ Write
$$
F_n(t,x)=\int_{\mathbb{R}}
e^{2\pi i\xi (x+nt)}
\Big(
\int_{\mathbb{R}}
\varphi(\xi_1/N)\,\varphi((\xi-\xi_1)/N)\varphi^2(n/N)\,\widehat{f}(\xi_1,n)\widehat{f}(\xi-\xi_1,n)d\xi_1 
\Big)
d\xi.
$$
Using the variable change $x'=x+nt$ (the $t$ integration disappears), we can write 
$$
\| F_n\|_{L^2([0,1]\times\R)}^2
=
\int_{\mathbb{R}}
\Big|
\int_{\mathbb{R}}
\varphi(\xi_1/N)\,\varphi((\xi-\xi_1)/N)\varphi^2(n/N)\,\widehat{f}(\xi_1,n)\widehat{f}(\xi-\xi_1,n)d\xi_1 
\Big|^2
d\xi.
$$
Using Cauchy-Schwarz  in the $\xi_1$ integration we get 
$$
\| F_n\|_{L^2([0,1]\times\R)}^2
\lesssim N \|\widehat{f}(\xi,n)\|_{L^2_\xi}^4
$$
which in turn implies 
$$
\Vert u_2\Vert^2_{L^2_{t,x,y}([0,1]\times \mathbb{R}\times
\mathbb{T})}
\lesssim
N\Vert \widehat{f}(\xi,n)\Vert^4_{
l^4_n L^2_\xi}.
$$
Let us next estimate 
$
\Vert u_1\Vert_{L^2_{t,x,y}([0,1]\times \mathbb{R}\times
\mathbb{T})}.
$
After some classical reductions  based on the Littlewood-Paley theory and the use of the Cauchy-Schwarz inequality, it suffices to bound 
$$
{\rm mes}(A_{N}(\tau,\xi,n)),
$$ 
 where
\begin{multline*}
A_N(\tau, \xi, n)=
\big\{
(\xi_1,n_1)\in \R\times\Z\,:\, 
|\tau+
\xi_1 n_1+(\xi-\xi_1)(n-n_1)
|\leq C,\,\, n_1\neq n-n_1\\
 |n_1|\leq 2N,\,\,\, |n-n_1|\leq 2N,\,\,\,
  |\xi_1|\leq 2N,\,\,\,  |\xi-\xi_1|\leq 2N
\big\},
\end{multline*}and where $\tau, \xi\in \R$, $n\in \Z$ and $N$ is a dyadic integer. We have
$$
\xi_1 n_1+(\xi-\xi_1)(n-n_1)=
(2n_1-n)\xi_1+\xi n-\xi n_1.
$$
The key point is that for $(\xi_1,n_1)\in A_N(\tau, \xi, n)$ we have that $2n_1-n\neq 0$. 
Also, we have that if $A_N(\tau, \xi, n)$ is nonempty then $|n|\leq 4N$. 
For each fixed $n_1\in  A_N(\tau, \xi, n)$ the measure of the possible $\xi_1$ is bounded by
$$
\frac{C}{2n_1-n}
$$
and therefore 
$$
{\rm mes}(A_{N}(\tau,\xi,n))\lesssim \sum_{|n_1|\leq 2N, 2n_1\neq n}\frac{1}{2n_1-n},
$$
where $n$ is an integer in the interval $[-4N,4N]$. Therefore 
$$
{\rm mes}(A_{N}(\tau,\xi,n))\lesssim \sum_{n_1=1}^{10N}\frac{1}{n_1}\lesssim \log(N) 
$$
which implies that 
$$
\Vert u_1\Vert_{L^2_{t,x,y}([0,1]\times \mathbb{R}\times\mathbb{T})}^2\lesssim \log(N) \| f\|_{L^2(\R\times\T)}^4\,.
$$
This completes the proof of Theorem~\ref{meanzerothm}.
\section{Global Strichartz estimates}\label{GSE}
In this section, we intend to deduce the Strichartz estimates \eqref{Stricremove} using the $\epsilon$-removal argument of Barron \cite{B21}. As the argument applies directly to our case, we skip it; see Remark \ref{disscus} for a discussion. The main result of the current section is the following global-in-time Strichartz estimate:
\begin{theorem}\label{Stric}
For all $\epsilon>0$, $N\geq 1$ and $f\in L^2(\mathcal{M})$, there exists $C_{\epsilon}<\infty$ such that 
\begin{equation}
\begin{split}\label{Stric1}
\Vert e^{it\Box} P_{\leq N}f\Vert_{\ell^8L^4(\mathbb{R},L^4(\mathcal{M}))}&\le  C_{\epsilon}N^{\epsilon}\Vert f\Vert_{L^2(\mathcal{M})}.
\end{split}
\end{equation}
\end{theorem}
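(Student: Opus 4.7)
The proof of Theorem \ref{Stric} proceeds by a $TT^*$ argument. Setting $Tf = e^{it\Box}P_{\leq N}f$, the target \eqref{Stric1} is equivalent by duality to
\[
\Big\|\int_{\mathbb{R}} K_N(t-s)\ast_{\mathbf{x}} F(s)\,ds\Big\|_{\ell^8 L^4}\lesssim N^{2\epsilon}\,\|F\|_{\ell^{8/7} L^{4/3}},
\]
where $K_N$ is the kernel from \eqref{kernel}. I split the $t$-$s$ convolution into a local part ($|t-s|\leq 2$) and a far part ($|t-s|\geq 2$). The local part is handled by the pairwise $TT^*$-dualization of Theorem \ref{mainthmL4est} on unit intervals: for $\gamma,\gamma'\in\mathbb{Z}$ with $|\gamma-\gamma'|\leq 2$,
\[
\Big\|\mathbf{1}_{[\gamma,\gamma+1]}(t)\int_{\gamma'}^{\gamma'+1} K_N(t-s)\ast_{\mathbf{x}}F(s)\,ds\Big\|_{L^4_{t,\mathbf{x}}}\lesssim N^{2\epsilon}\,\|F\|_{L^{4/3}_{t,\mathbf{x}}([\gamma',\gamma'+1]\times\mathcal{M})},
\]
and summing in $\ell^8_\gamma$ together with the embedding $\ell^{8/7}\hookrightarrow\ell^8$ disposes of the local contribution.

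For the far part a natural first attempt is the purely dispersive approach: interpolating the pointwise bound $\|K_N(t)\|_{L^\infty_{\mathbf{x}}}\lesssim N|t|^{-1/2}$ from Lemma \ref{kerestlem} with the $L^2_{\mathbf{x}}$-unitarity of $e^{it\Box}$ yields $\|K_N(t)\ast_{\mathbf{x}}\|_{L^{4/3}_{\mathbf{x}}\to L^4_{\mathbf{x}}}\lesssim N^{1/2}|t|^{-1/4}$, after which the discrete Hardy--Littlewood--Sobolev inequality of Lemma \ref{HLSineq} at $\alpha=1/4$ (admissible for the pair $(\ell^{8/7},\ell^8)$) produces a bound with loss $N^{1/2}$ at the level of $TT^*$, that is, $N^{1/4}$ at the level of $T$. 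This falls short of \eqref{Stric1} by $N^{1/4-\epsilon}$, and closing this gap is the main obstacle of the proof.

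To bridge the gap I would follow the strategy of Barron--Christ--Pausader \cite{BCP21}, exploiting the tensor structure $K_N(t,\mathbf{x})=K_{NI}(t,x)\otimes K_{NS}(t,y)$ from \eqref{kernel}. Writing the $y$-Fourier expansion $f(x,y)=\sum_{k\in\mathbb{Z}} f_k(x)e^{2\pi iky}$ and $v_k(t,x):=e^{it\partial_x^2}f_k(x)$, one has (up to the convention of the Fourier transform) $e^{it\Box}P_{\leq N}f=\sum_{k}\varphi(k/N) e^{-2\pi it k^2}e^{2\pi iky} v_k(t,x)$, and Plancherel in $y$ gives pointwise in $(t,x)$
\[
\|e^{it\Box}P_{\leq N}f(t,x,\cdot)\|_{L^4_y}^4=\sum_{m\in\mathbb{Z}}\Big|\sum_{k}\varphi(\tfrac{k+m}{N})\varphi(\tfrac{k}{N}) e^{-2\pi it(2km+m^2)} v_{k+m}(t,x)\overline{v_k(t,x)}\Big|^2.
\]
This reduces the $\ell^8L^4$ estimate to controlling a family of bilinear expressions $v_{k+m}\bar v_k$ in the 1D Schrödinger solutions $v_k$. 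Each such bilinear piece inherits the 1D Schrödinger dispersion in the Euclidean direction $x\in\mathbb{R}$, for which the endpoint Strichartz pair $(L^8_t,L^4_x)$ is lossless; the orthogonality in the periodic index $k$ confines the overall loss to $N^\epsilon$, matching precisely the deficit identified in the dispersive step. The expected main technical hurdle is the bilinear book-keeping needed to ensure that no additional loss accumulates in the sum over periodic frequencies. Once Theorem \ref{Stric} is established, the $\epsilon$-removal argument of \cite{B21} applies directly and yields the sharp estimate \eqref{Stricremove} in Proposition \ref{removeglobalstric}.
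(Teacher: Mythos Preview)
Your local-part treatment is fine, and you correctly diagnose that the naive dispersive bound $\|K_N(t)\ast_{\mathbf{x}}\|_{L^{4/3}\to L^4}\lesssim N^{1/2}|t|^{-1/4}$ leaves a deficit of $N^{1/4}$ at the level of $T$. The gap is in the far part: beyond citing \cite{BCP21} and writing down the $L^4_y$ expansion, you do not actually close the estimate. Saying that ``the orthogonality in the periodic index $k$ confines the overall loss to $N^\epsilon$'' and that the remaining work is ``bilinear book-keeping'' is not a proof; it is precisely this step that carries all the content of the theorem, and your sketch does not indicate a mechanism by which the oscillatory phases $e^{-2\pi i t(2km+m^2)}$ are exploited to recover the missing $N^{1/4}$. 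Appealing to the 1D admissible pair $(L^8_t,L^4_x)$ for each $v_k$ individually does not help, because after squaring out you are facing a quadrilinear (and, at the level of the $\ell^8$ norm, an eight-linear) object in the $v_k$'s with nontrivial phase interaction in both $t$ and $k$.

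The paper's argument, following \cite{BCP21}, is structurally different from your near/far splitting. One replaces the sharp time cutoffs by Gaussians $e^{-\frac14(t-\gamma)^2}$, computes $J_\gamma^4$ exactly as an oscillatory integral in the frequency variables, and then evaluates $\sum_\gamma J_\gamma^8$ by \emph{Poisson summation in $\gamma$}: this converts the sum over time-translates into a sum of Dirac constraints $\delta(\mu-Q+Q')$ with $\mu\in 2\pi\mathbb{Z}$ and $Q,Q'$ the resonance functions. One then rewrites the resulting expression as $\langle F,\mathcal{K}F\rangle$ for an explicit kernel $\mathcal{K}$ and applies the \emph{Schur test}, reducing everything to a uniform bound $\sup_{(\Xi,K)}\mathcal{I}(\Xi,K)\lesssim(\log N)^2$. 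That bound is obtained by a counting/measure argument on the sets $A_j$ of \eqref{sets}, which is exactly the same level-set estimate that drives the local $L^4$ Strichartz of Theorem~\ref{mainthmL4est}. None of these three ingredients---Poisson summation, Schur test, and the level-set counting---appears in your proposal, and without them the $N^{1/4}$ gap cannot be closed.
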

We also have the following estimate, which follows from $TT^*$ argument as in \cite{GV95, HP14}. We give the proof for the readers' convenience. 
\begin{proposition}\label{Stricinfity}
For all $N\geq 1$, $f\in L^2(\mathcal{M})$, there exists $C<\infty$ such that 
\begin{equation}
\begin{split}\label{Stricinfity1}
\Vert e^{it\Box} P_{\leq N}f\Vert_{\ell^4L^{\infty}(\mathbb{R},L^{\infty}(\mathcal{M}))}&\le  C N\Vert f\Vert_{L^2(\mathcal{M})}. 
\end{split}
\end{equation}
\end{proposition}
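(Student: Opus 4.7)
The plan is to prove Proposition~\ref{Stricinfity} via the standard $TT^*$ argument. Set $Tf:=e^{it\Box}P_{\leq N}f$; identifying $\ell^4_\gamma L^\infty_t L^\infty_{\mathbf{x}}=(\ell^{4/3}_\gamma L^1_t L^1_{\mathbf{x}})^*$ with the natural $L^2(\mathbb{R}\times\mathcal{M})$ pairing, the identity $\|T^*G\|_{L^2}^2=\langle TT^*G,G\rangle$ combined with duality reduces the desired bound $\|Tf\|_{\ell^4 L^\infty L^\infty}\lesssim N\|f\|_{L^2}$ to the bilinear estimate
\begin{equation*}
\|TT^*G\|_{\ell^4_\gamma L^\infty_t L^\infty_{\mathbf{x}}}\;\lesssim\; N^2\,\|G\|_{\ell^{4/3}_\gamma L^1_t L^1_{\mathbf{x}}}.
\end{equation*}

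The operator $TT^*$ is a space--time convolution against the truncated kernel $K_N(t-s,\mathbf{x})$ from \eqref{kernel} (up to replacing $P_{\leq N}^2$ by a fatter projection), which gives the pointwise bound
\begin{equation*}
\|TT^*G(t,\cdot)\|_{L^\infty_{\mathbf{x}}}\;\leq\;\int_{\mathbb{R}}\|K_N(t-s,\cdot)\|_{L^\infty_{\mathbf{x}}}\,\|G(s,\cdot)\|_{L^1_{\mathbf{x}}}\,ds.
\end{equation*}
I will use two complementary kernel bounds: the trivial Bernstein-type bound $\|K_N(t,\cdot)\|_{L^\infty_{\mathbf{x}}}\lesssim N^2$ (obtained by integrating, resp.\ summing, the cutoffs $\varphi(\xi/N)$ and $\varphi(k/N)$ in \eqref{kernel}), and the dispersive bound $\|K_N(t,\cdot)\|_{L^\infty_{\mathbf{x}}}\lesssim N|t|^{-1/2}$ from Lemma~\ref{kerestlem}. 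Setting $a_\gamma:=\int_{[\gamma,\gamma+1]}\|G(s,\cdot)\|_{L^1_{\mathbf{x}}}\,ds$ (so that $\|a\|_{\ell^{4/3}_\gamma}=\|G\|_{\ell^{4/3}L^1L^1}$), I partition the $s$-integral over unit intervals $[\gamma',\gamma'+1]$ and split into two regimes. On the near-diagonal range $|\gamma-\gamma'|\leq 1$, the Bernstein bound gives a contribution $\lesssim N^2 a_{\gamma'}$ per unit interval; on the far range $|\gamma-\gamma'|\geq 2$, one has $|t-s|\gtrsim|\gamma-\gamma'|$, so the dispersive bound yields $\lesssim N|\gamma-\gamma'|^{-1/2}a_{\gamma'}$. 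Taking $\sup_{t\in[\gamma,\gamma+1]}$ and then $\ell^4_\gamma$, the near piece is controlled by $N^2\|a\|_{\ell^4}\leq N^2\|a\|_{\ell^{4/3}}$ via the inclusion $\ell^{4/3}\hookrightarrow\ell^4$, while the far piece is bounded by $N\|a\|_{\ell^{4/3}}$ via Lemma~\ref{HLSineq} applied with $\alpha=\tfrac{1}{2}$ and $p=r=\tfrac{4}{3}$. Summing the two contributions yields the desired bilinear bound.

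The main obstacle I anticipate is the near-diagonal contribution: the kernel singularity $|t-s|^{-1/2}$ is not locally in $L^\infty_s$, so one cannot close the estimate by duality against an $L^1_s$ mass bound on $G$ using the dispersive bound alone. The resolution is to trade the dispersive decay for the cruder bound $N^2$ on the $O(1)$ unit intervals surrounding $t$; this destroys the time decay there but is absorbed by $\ell^{4/3}\hookrightarrow \ell^4$ applied to the resulting finite sum. A secondary (minor) subtlety is that $\ell^4 L^\infty L^\infty$ is not reflexive, so the $TT^*$ duality should be set up through the predual $\ell^{4/3}L^1L^1$; the pairing is the natural $L^2$ one and the scheme proceeds exactly as in \cite{GV95,HP14}.
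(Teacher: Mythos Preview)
Your proposal is correct and follows essentially the same $TT^*$ strategy as the paper: both split the kernel contribution into a near-diagonal part handled by the Bernstein-type bound (the paper phrases this as the dual of $\|e^{it\Box}P_{\leq N}g\|_{L^\infty}\lesssim N\|g\|_{L^2}$, you use $\|K_N\|_{L^\infty}\lesssim N^2$ directly) and a far part handled by the dispersive bound $|K_N(t,\mathbf{x})|\lesssim N|t|^{-1/2}$ from Lemma~\ref{kerestlem} together with the discrete Hardy--Littlewood--Sobolev inequality (Lemma~\ref{HLSineq}) with $\alpha=\tfrac12$, $p=r=\tfrac43$. The only cosmetic difference is that the paper works with the $T^*$ bound \eqref{L2l43L1} and squares (introducing an explicit partition of unity $\psi_\gamma$), whereas you bound $TT^*:\ell^{4/3}L^1L^1\to\ell^4L^\infty L^\infty$ directly; the two formulations are equivalent.
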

Therefore, interpolating \eqref{Stric1} and \eqref{Stricinfity1} yields
\begin{corollary}
Assume $\frac{2}{q}+\frac{1}{p}=\frac{1}{2}$ and $4< q\leq 8$. Then, for all $\epsilon>0$, $N\geq 1$ and $f\in L^2(\mathcal{M})$, there exists $C_{\epsilon}<\infty$ such that 
\begin{equation}
\begin{split}\label{Stricgeneral}
\Vert e^{it\Box} P_{\leq N}f\Vert_{\ell^qL^{p}(\mathbb{R},L^{p}(\mathcal{M}))}&\leq  C_{\epsilon} N^{1-\frac{4}{p}+\epsilon}\Vert f\Vert_{L^2(\mathcal{M})}. 
\end{split}
\end{equation}
\end{corollary}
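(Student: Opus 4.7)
The final statement is a standard interpolation between the two already-established endpoint estimates: \eqref{Stric1} at $(p,q)=(4,8)$ and \eqref{Stricinfity1} at $(p,q)=(\infty,4)$, both of which lie on the line $\tfrac{2}{q}+\tfrac{1}{p}=\tfrac{1}{2}$ (indeed $\tfrac{2}{8}+\tfrac{1}{4}=\tfrac{1}{2}$ and $\tfrac{2}{4}+0=\tfrac{1}{2}$). The plan is therefore to apply Riesz--Thorin complex interpolation to the linear operator $T:L^2(\mathcal{M})\to \ell^{q}L^{p}$ defined by $Tf=e^{it\Box}P_{\leq N}f$, taking advantage of the two endpoint bounds.

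First I would dispose of the endpoint $q=8$, which is exactly \eqref{Stric1}, so we may assume $4<q<8$. Given such a pair $(p,q)$ with $\tfrac{2}{q}+\tfrac{1}{p}=\tfrac{1}{2}$, set $\theta:=1-\tfrac{4}{p}\in(0,1)$. Then one verifies by direct computation that
$$\frac{1}{p}=\frac{1-\theta}{4}+\frac{\theta}{\infty},\qquad \frac{1}{q}=\frac{1-\theta}{8}+\frac{\theta}{4},$$
the second identity being equivalent to the gauge condition $\tfrac{2}{q}+\tfrac{1}{p}=\tfrac{1}{2}$.

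Next I would invoke the Riesz--Thorin theorem in the mixed-norm setting $\ell^{q}L^{p}(\mathbb{R},L^{p}(\mathcal{M}))$ (this is valid in the Benedek--Panzone framework for iterated Lebesgue spaces on $\sigma$-finite measure spaces, and applies verbatim to our partition of $\mathbb{R}$ into unit intervals indexed by $\mathbb{Z}$). Using the endpoint operator norms $CN^{\epsilon}$ and $CN$ furnished by \eqref{Stric1} and \eqref{Stricinfity1}, the interpolation yields
$$\Vert T f\Vert_{\ell^{q}L^{p}}\leq (CN^{\epsilon})^{1-\theta}(CN)^{\theta}\Vert f\Vert_{L^2}=CN^{\theta+(1-\theta)\epsilon}\Vert f\Vert_{L^2}.$$
Substituting $\theta=1-\tfrac{4}{p}$ converts the exponent to $1-\tfrac{4}{p}+\tfrac{4\epsilon}{p}$. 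Since $\epsilon>0$ was arbitrary in \eqref{Stric1}, we may relabel $\tfrac{4\epsilon}{p}$ as a new $\epsilon$ (the implicit constant worsens only by an absolute multiplicative factor), obtaining \eqref{Stricgeneral}.

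There is no real obstacle here; the only point deserving a brief comment is the applicability of Riesz--Thorin to the mixed-norm space $\ell^{q}L^{p}$ with the operator taking values in it, but this is standard and entails no modification of the usual complex interpolation argument. I would therefore present the proof as a short paragraph explicitly identifying $\theta$ and carrying out the interpolation.
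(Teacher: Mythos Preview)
Your proposal is correct and follows exactly the approach indicated in the paper, which simply states that the corollary is obtained by ``interpolating \eqref{Stric1} and \eqref{Stricinfity1}''; you have merely made explicit the interpolation parameter $\theta=1-\tfrac{4}{p}$ and the standard mixed-norm Riesz--Thorin step that the paper leaves implicit.
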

 When $p>4$, the loss of $N^{\epsilon}$ in \eqref{Stricgeneral} can be removed, which is addressed in Proposition \ref{removeglobalstric}. In particular, Proposition \ref{Stricinfity} affirms the removal of $N^{\epsilon}$ in the case $(p,q)=(\infty,4)$.
\begin{remark}\label{disscus} The proof of Proposition \ref{removeglobalstric} is a particular case of the $\epsilon$-removal argument of Barron \cite{B21}. The main point is to split the bilinear form arising from duality into sums over small and large time scales $(\text{as in}\,\, \eqref{sigmadiagnondiag})$. The small portion is handled via Proposition \ref{epsilonremovalprop} while the sum corresponding to the large time scales is decomposed dyadically $(\text{in the parameter}\,\, \gamma\, \text{with}\,\, |\gamma|\geq 10)$ so that via an interpolation argument $(\text{in a neighborhood of}\,\, (\frac{1}{p},\frac{1}{p}))$ one can obtain a good estimate on the terms of decomposition. This interpolation argument indeed brings about some nice power of $N$ yet still with $\epsilon$-loss and also some power of dyadics due to the decomposition of large time scales. Thus, thanks to these nice powers, the ${\epsilon}$-loss from the power of $N$ can be removed by an easy observation $($see Lemma 4.7 in \cite{B21}$)$, which is valid for $p>4$ close enough to $p^*=4$; but then using Proposition \ref{Stricinfity}, we directly extend the $\epsilon$-removal argument to the remaining range of $p>4$ by an interpolation. Yet another benefit of the nice powers resulting from interpolation argument is that together with the atomic decomposition of Keel and Tao \cite{KT98} on the input functions it allows us to sum in powers of dyadics associated with the decomposition over time scales. For the details, see \cite[Section 4]{B21}. 
\end{remark} 
\begin{proof}[Proof of Proposition \ref{Stricinfity}]
By duality, it suffices to show the following estimate:
\begin{align}\label{L2l43L1}
    \Big\Vert \int_{\mathbb{R}}e^{-is\Box_{\mathbf{x}}} P_{\leq N}f(s,\mathbf{x})\,ds\Big\Vert_{L^2_{\mathbf{x}}(\mathcal{M})}\lesssim N \Vert f\Vert_{\ell^{\frac{4}{3}}_{\gamma}L^1_{t,\mathbf{x}}([\gamma,\gamma+1)\times \mathcal{M})}.
\end{align}
Given a smooth partition of unity $\psi_{\gamma}(t):=\psi(t-\gamma)$ such that $$\sum_{\gamma \in \mathbb{Z}}\psi_{\gamma}(t)=1\,\,\text{and}\,\,\supp \psi \subseteq [-1,1],$$ we denote $f_{\alpha}(t):=\psi(t)f(t+\alpha)$ and write  
\begin{equation}\label{sigmadiagnondiag}
\begin{aligned}
\relax[\text{LHS}\,\eqref{L2l43L1}]^2&=\iint_{\mathbb{R}^2}\langle e^{-is\Box_{\mathbf{x}}} P_{\leq N}f(s,\mathbf{x}), e^{-it\Box_{\mathbf{x}}} P_{\leq N}f(t,\mathbf{x})\rangle_{L^2_{\mathbf{x}}}\,dsdt\\&=\sum_{\alpha, \beta}\iint_{[-1,1]^2}\langle e^{-i(s+\alpha-\beta)\Box_{\mathbf{x}}} P_{\leq N}f_{\alpha}(s,\mathbf{x}), e^{-it\Box_{\mathbf{x}}} P_{\leq N}f_{\beta}(t,\mathbf{x})\rangle_{L^2_{\mathbf{x}}}\,dsdt\\&=\sigma_{\text{diag}}+\sigma_{\text{non-diag}}
\end{aligned}
\end{equation}
where
\begin{equation*}
\begin{aligned}
\sigma_{\text{diag}}&=\sum_{\alpha, |\gamma|\leq 10}\iint_{[-1,1]^2}\langle e^{-i(s-\gamma)\Box_{\mathbf{x}}} P_{\leq N}f_{\alpha}(s,\mathbf{x}), e^{-it\Box_{\mathbf{x}}} P_{\leq N}f_{\alpha+\gamma}(t,\mathbf{x})\rangle_{L^2_{\mathbf{x}}}\,dsdt\\ \sigma_{\text{non-diag}}&=\sum_{\alpha, |\gamma|\geq 10}\iint_{[-1,1]^2}\langle e^{-i(s-t-\gamma)\Box_{\mathbf{x}}} P_{\leq N}f_{\alpha}(s,\mathbf{x}), P_{\leq N}f_{\alpha+\gamma}(t,\mathbf{x})\rangle_{L^2_{\mathbf{x}}}\,dsdt.
\end{aligned}
\end{equation*}
We have
\begin{equation}\label{sigmadiagest}
\begin{aligned}
\sigma_{\text{diag}}\leq \sum_{\alpha, |\gamma|\leq 10}\Big\Vert \int_{\mathbb{R}}e^{-is\Box_{\mathbf{x}}}P_{\leq N}f_{\alpha+\gamma}(s,\mathbf{x})\,ds\Big\Vert_{L^2_{\mathbf{x}}}^2
\end{aligned}.
\end{equation}
Note that for any time interval $I\subset\mathbb{R}$ and $g\in L^2(\mathcal{M})$, Bernstein's inequality yields
\begin{align*}
    \Vert e^{it\Box}P_{\leq N}g\Vert_{L^{\infty}_{t,\mathbf{x}}(I\times\mathcal{M})}\lesssim N \Vert g\Vert_{L^2_{\mathbf{x}}(\mathcal{M})}.
\end{align*}
Therefore, for $h$ supported in $I$, this yields by duality 
\begin{align}\label{L2L1}
    \Big\Vert \int_{\mathbb{R}}e^{-it\Box_{\mathbf{x}}}P_{\leq N}h(t, \mathbf{x})\,dt\Big\Vert_{L^2_{\mathbf{x}}(\mathcal{M})}\lesssim N \Vert h\Vert_{L^1_{t,\mathbf{x}}(I\times\mathcal{M})}.
\end{align}
As a result, since $\supp f_{\alpha+\gamma} \subseteq [-1,1]$, from \eqref{sigmadiagest} and \eqref{L2L1}, we obtain
\begin{equation}\label{sigmadiaest}
\begin{aligned}
    \sigma_{\text{diag}}&\lesssim N^2\sum_{\alpha, |\gamma|\leq 10}\Vert f_{\alpha+\gamma}(t)\Vert_{L^1_{t,\mathbf{x}}([-1,1]\times\mathcal{M})}^2\\&\lesssim N^2\sum_{\alpha}\Vert f_{\alpha}(t)\Vert_{L^1_{t,\mathbf{x}}([-1,1]\times\mathcal{M})}^2\\&\sim N^2\Vert f\Vert_{\ell^2_{\gamma}L^1_{t,\mathbf{x}}([\gamma,\gamma+1]\times\mathcal{M})}^2\leq N^2\Vert f\Vert_{\ell^{\frac{4}{3}}_{\gamma}L^1_{t,\mathbf{x}}([\gamma,\gamma+1]\times\mathcal{M})}^2.
\end{aligned}
\end{equation}
For $\sigma_{\text{non-diag}}$, define $K_{N,\gamma}(t,\mathbf{x}):=\mathbf{1}_{[-1,1]}(t)\cdot K_N(t+\gamma,\mathbf{x})$, where $K_N$ is as in \eqref{kernel} (in view of \eqref{kernel}, we define $K_{NI,\gamma}$ and $K_{NS,\gamma}$ in the same way). Thus we have
\begin{equation*}
\begin{aligned}
\sigma_{\text{non-diag}}&=\sum_{\alpha, |\gamma|\geq 10}\int_t\iint_{\mathbf{x}}\Big(\int_se^{i[(t+\gamma)-s]\Box_{\mathbf{x}}} P_{\leq N}f_{\alpha}(s,\mathbf{x})\,ds\Big)\overline{P_{\leq N}f_{\alpha+\gamma}(t,\mathbf{x})}\,d\mathbf{x}dt\\&=c\sum_{\alpha, |\gamma|\geq 10}\int_t\iint_{\mathbf{x}}[K_{N,\gamma}*_{t,\mathbf{x}}P_{\leq N}f_{\alpha}](t,\mathbf{x})\overline{P_{\leq N}f_{\alpha+\gamma}(t,\mathbf{x})}\,d\mathbf{x}dt.
\end{aligned}
\end{equation*}
Let's estimate the convolution first
\begin{equation*}
\begin{aligned}\label{Linftytx}
&\Vert K_{N,\gamma}*_{t,\mathbf{x}}P_{\leq N}f_{\alpha}\Vert_{L^{\infty}_{t,\mathbf{x}}}\lesssim \Vert K_{NI,\gamma}\otimes K_{NS,\gamma}\Vert_{L^{\infty}_{t,\mathbf{x}}}\Vert P_{\leq N}f_{\alpha}\Vert_{L^{1}_{t,\mathbf{x}}}\\&\lesssim N\Vert |t+\gamma|^{-\frac{1}{2}}\Vert_{L^{\infty}_t} \Vert P_{\leq N}f_{\alpha}\Vert_{L^{1}_{t,\mathbf{x}}}\lesssim |\gamma|^{-\frac{1}{2}}N\Vert P_{\leq N}f_{\alpha}\Vert_{L^{1}_{t,\mathbf{x}}}
\end{aligned}
\end{equation*}
where we have used the fact that $\Vert K_{NI,\gamma}\Vert_{L^{\infty}_x}\lesssim |\gamma|^{-\frac{1}{2}}$ and $\Vert K_{NS,\gamma}\Vert_{L^{\infty}_y}\lesssim N$, since $t\in [-1,1]$ and $|\gamma|\geq 10$. This implies that
\begin{equation}\label{sigmanondiaest}
\begin{aligned}
\sigma_{\text{non-diag}}&\lesssim\sum_{\alpha, |\gamma|\geq 10}\Vert K_{N,\gamma}*_{t,\mathbf{x}}P_{\leq N}f_{\alpha}\Vert_{L^{\infty}_{t,\mathbf{x}}}\Vert f_{\alpha+\gamma}\Vert_{L^{1}_{t,\mathbf{x}}}\\&\lesssim N\sum_{\alpha, |\gamma|\geq 10}|\gamma|^{-\frac{1}{2}}\Vert f_{\alpha}\Vert_{L^{1}_{t,\mathbf{x}}}\Vert f_{\alpha+\gamma}\Vert_{L^{1}_{t,\mathbf{x}}}\\&\lesssim N \Vert f\Vert^2_{\ell_{\gamma}^{\frac{4}{3}}L^1_{t,\mathbf{x}}([\gamma,\gamma+1]\times \mathcal{M})}
\end{aligned}
\end{equation}
in the last estimate above we utilize Hardy-Littlewood-Sobolev inequality (Lemma \ref{HLSineq}). Thus, combining \eqref{sigmadiaest} and \eqref{sigmanondiaest}, the estimate \eqref{L2l43L1} follows.
\end{proof}

\subsection{Proof of Theorem \ref{Stric}}\label{subsectionmainthm}
We follow the argument and adopt the notations in \cite{BCP21}. We accordingly carry out an argument on the frequency side and hence consider $f\in L^2(\mathbb{R}\times\mathbb{Z})$ with $\Vert f\Vert_{L^2}=1$. Note that the Fourier transform of such $f$ corresponds to the function in \eqref{Stric1}. Unlike the notations of the previous sections, we proceed with the following convention for Fourier transform (which is defined in this case on $\mathbb{R}\times\mathbb{Z}$): 
\begin{equation*}
\begin{aligned}
\mathcal{F}_{\xi,k}[f](x,y)&=\widehat{f}(x,y)=\sum_{k\in\mathbb{Z}}\int_{\mathbb{R}}f(\xi,k)e^{ix\xi}e^{iky}d\xi,\\ \widecheck{g}(\xi,k)&=\frac{1}{(2\pi)^2}\int_{\mathbb{R}}\int_{y=0}^{2\pi} g(x,y)e^{-ix\xi}e^{-iky}dydx.
\end{aligned}
\end{equation*}
Therefore, by introducing suitable Gaussian cutoff in time, estimate \eqref{Stric1} amounts to showing that
\begin{align}\label{gammasumNeps}
\mathcal{J}:=\sum_{\gamma\in\mathbb{Z}}J_{\gamma}^{8}\lesssim_{\epsilon} N^{\epsilon},
\end{align}
where
\begin{equation}\label{DefJGamma}
J_\gamma:=\Vert e^{-\frac{1}{4}(t-\gamma)^2}e^{it\Box}\mathcal{F}_{\xi,k}\big[\mathds{1}_{\{|\xi|+|k|\lesssim N\}}f\big]\Vert_{L^4_{x,y,t}(\mathbb{R}\times\mathbb{T}\times\mathbb{R})}.
\end{equation}
To this end, we start with some new notations:
\begin{equation}\label{notationfQ}
\begin{split}
{\vec \xi}&=(\xi_1,\xi_2,\xi_3,\xi_4),\qquad\quad {\vec k}=(k_1,k_2,k_3,k_4),\qquad\quad\vec{\iota}=(1,-1,1,-1),\\
 \vec{\xi}\cdot \vec{\iota}&=\xi_1-\xi_2+\xi_3-\xi_4,\qquad\vec{ k}\cdot\vec{\iota}=k_1-k_2+k_3-k_4,\\
f_j&=\mathds{1}_{\{|\xi_j|+|k_j|\lesssim N\}}f(\xi_j,k_j),\,\, j\in\{1,3\},\qquad f_j=\mathds{1}_{\{|\xi_j|+|k_j|\lesssim N\}}\overline{f}(\xi_j,k_j),\,\, j\in\{2,4\},\\
Q( \xi, k)&=|(\xi_1,k_1)|^2_-+|(\xi_3,k_3)|^2_--|(\xi_2,k_2)|^2_--|(\xi_4,k_4)|^2_-
\end{split}
\end{equation}
 where $|(a,b)|^2_-:=|a|^2-|b|^2$. Making the substitution $t\to t+\gamma$ in \eqref{DefJGamma} and using the notations in \eqref{notationfQ} give that 
\begin{equation*}
\begin{split}
J_\gamma^4&=\int_{x,y,t}\left[\sum_{k_1\dots k_4} \int_{\xi_1\dots\xi_4}\Big(\Pi_{j=1}^4 f_j\Big) e^{- t^2}e^{-i(t+\gamma)Q(\xi, k)} e^{i x\vec{\xi}\cdot\vec{\iota} }e^{iy\vec{ k}\cdot\vec{\iota}} d{\vec\xi}\right] dxdydt\\
&=4\pi^\frac{5}{2}\sum_{k_1\dots k_4} \int_{\xi_1\dots\xi_4}\Big(\Pi_{j=1}^4f_j\Big) e^{-\frac{1}{4} (Q(\xi, k))^2}e^{-i\gamma Q(\xi,k)} \delta(\vec{\xi}\cdot\vec{\iota})\delta(\vec{k}\cdot\vec{\iota}) d{\vec \xi}.
\end{split}
\end{equation*}
Next, writing $f^\prime_l$ analogously as in \eqref{notationfQ} and $Q = Q(\xi, k)$, $Q' = Q(\xi', k')$, we obtain
\begin{equation}\label{DefN}
\begin{split}
\mathcal{J}&=16\pi^5\sum_{\substack{k_1\dots k_4\\ k^\prime_1\dots k^\prime_4}} \int_{\substack{\xi_1\dots\xi_4\\\xi_1^\prime\dots\xi_4^\prime}}\Pi_{j=1}^4 f_j\overline{\Pi_{l=1}^4f_l^\prime}\,  e^{-\frac{1}{4} \left[ Q^2+(Q^\prime)^2\right]} \Big(\sum_\gamma e^{-i\gamma \left[ Q-Q^\prime\right]}\Big)\\
&\qquad\qquad\quad\times \delta(\vec{\xi}\cdot\vec{\iota}) \delta(\vec{\xi'}\cdot\vec{\iota})\delta(\vec{k}\cdot\vec{\iota})\delta(\vec{k'}\cdot\vec{\iota}) d{\vec \xi}d{\vec \xi^\prime}\\&=16\sqrt{2}\pi^{\frac{11}{2}}\sum_{\substack{k_1\dots k_4\\ k^\prime_1\dots k^\prime_4}} \int_{\substack{\xi_1\dots\xi_4\\\xi_1^\prime\dots\xi_4^\prime}}\Pi_{j=1}^4 f_j\overline{\Pi_{l=1}^4f_l^\prime}\,e^{-\frac{1}{4} \left[  Q^2+(Q^\prime)^2\right]}\\
&\qquad\quad\quad\times  \Big(\sum_{\mu\in 2\pi \mathbb{Z}} \delta(\mu-Q+Q^\prime)\Big)\delta(\vec{\xi}\cdot\vec{\iota}) \delta(\vec{\xi'}\cdot\vec{\iota})\delta(\vec{k}\cdot\vec{\iota})\delta(\vec{k'}\cdot\vec{\iota}) d{\vec \xi}d{\vec \xi^\prime}
\end{split}
 \end{equation}
where we have exploited Poisson summation formula in the second equality in \eqref{DefN}. In what follows, we will write $\mathcal{J}$ in the form of an operator given by a suitable kernel so that we can apply Schur test. In this connection, we rely on the following notations:
\begin{equation*}
\begin{split}
\Xi&:=(\xi_1,\xi_3,\xi_2^\prime,\xi_4^\prime),\qquad\Xi^\prime:=(\xi_2,\xi_4,\xi_1^\prime,\xi_3^\prime),\\
K&:=(k_1,k_3,k_2^\prime,k_4^\prime),\qquad K^\prime:=(k_2,k_4,k_1^\prime,k_3^\prime),\\
F(\Xi,K)&:=f_1f_3f^\prime_2f^\prime_4,\qquad F(\Xi^\prime,K^\prime):=f_2f_4f^\prime_1f^\prime_3,\\
\phi_\mu&:=\mu-Q+Q^\prime=\mu-\vert\Xi\vert^2+\vert K\vert^2+\vert\Xi^\prime\vert^2-\vert K^\prime\vert^2.
\end{split}
\end{equation*}
We rewrite \eqref{DefN} via this reformulation as follows:
\begin{equation*}
\begin{split}
\mathcal{J}
&=16\sqrt{2}\pi^{\frac{11}{2}} \sum_{K,K^\prime\in\mathbb{Z}^4} \int_{\Xi,\Xi^\prime}F(\Xi,K)\overline{F(\Xi^\prime,K^\prime)} \, \mathcal{K}(\Xi,K;\Xi^\prime,K^\prime)\, d\Xi d\Xi^\prime\end{split}
\end{equation*}
where
\begin{equation*}
\begin{split}
\mathcal{K}(\Xi,K;\Xi^\prime,K^\prime)&:=  \mathds{1}_{\{|\Xi^\prime|+|K^\prime|\lesssim N\}} e^{-\frac{1}{4} \left[ Q^2+(Q^\prime)^2\right]} \sum_{\mu\in2\pi \mathbb{Z}}\delta(\phi_\mu)\delta(\vec{\xi}\cdot\vec{\iota}) \delta(\vec{\xi'}\cdot\vec{\iota})\delta(\vec{k}\cdot\vec{\iota})\delta(\vec{k'}\cdot\vec{\iota}).\\
\end{split}
\end{equation*}
 Applying Cauchy-Schwarz in $K$ and $\Xi$ first, and then Schur test, the estimate \eqref{gammasumNeps} boils down to demonstrating:
\begin{equation*}
\begin{split}
\sup_{(\Xi,K)\in\mathbb{R}^4\times\mathbb{Z}^4}\mathcal{I}(\Xi,K) \lesssim (\log N)^2,
\end{split}
\end{equation*}
where
\begin{align*}
\mathcal{I}(\Xi,K):=\sum_{K^\prime\in\mathbb{Z}^4}\int_{\Xi^\prime\in\mathbb{R}^4}\mathcal{K}(\Xi,K;\Xi^\prime, K^\prime)\,d\Xi^\prime.	
\end{align*}
In other words, we want to show that $\mathcal{I}(\Xi,K)\lesssim (\log N)^2$ uniformly in $(\Xi,K)\in\mathbb{R}^4\times\mathbb{Z}^4$. To make some simplifications in the expression of $\mathcal{I}(\Xi,K)$, we proceed as follows. Note that for the sum in $\mu$, we make use of the exponential decay coming from the support of $\delta(\phi_{\mu})$, more precisely, we have 
\begin{equation*}
\begin{split}
e^{-\frac{1}{4} \left[ Q^2+(Q^\prime)^2\right]}\le e^{-\frac{1}{16}\mu^2}e^{-\frac{1}{8} \left[ Q^2+(Q^\prime)^2\right]}.
\end{split}
\end{equation*}
Note also that on the supports of $\delta(\vec{\xi}\cdot\vec{\iota})\delta(\vec{k}\cdot\vec{\iota})$ and $\delta(\vec{\xi'}\cdot\vec{\iota})\delta(\vec{k'}\cdot\vec{\iota})$ we can insert \begin{align*}(\xi_4,k_4) &=( \xi_1 - \xi_2 + \xi_3,\, k_1 - k_2 + k_3)\\ (\xi_{3}',k_{3}' )&= (-\xi_{1}' + \xi_{2}' + \xi_{4}', -k_{1}' + k_{2}' + k_{4}') \end{align*} into the expressions for $Q$ and $Q^\prime$ to get
\begin{equation*}
\begin{split}
Q&=-2\left[\vert (\xi_2-c_x,k_2-c_y)\vert_-^2-|(R_1,R_2)|_-^2\right],\\Q^\prime&=2\left[\vert (\xi_{1}' -c'_x,k_{1}' -c'_y)\vert^2_--|(R'_1,R'_2)|_-^2\right],\end{split}
\end{equation*}
 and
\begin{equation*}
\begin{split}
(c_x,c_y)&=(\frac{\xi_1+\xi_3}{2},\frac{k_1+k_3}{2}),\qquad R=(R_1,R_2)=(\frac{\xi_1-\xi_3}{2},\frac{k_1-k_3}{2}),\\
(c_{x}',c'_y)&=(\frac{\xi'_2+\xi'_4}{2},\frac{k'_2 + k'_4}{2}),\qquad R'=(R'_1,R'_2)=(\frac{\xi'_2-\xi'_4}{2},\frac{k'_2-k'_4}{2}).
\end{split}
\end{equation*}
Substituting the above expressions for $Q$ and $Q^\prime$ into $\delta(\phi_\mu)$ gives that
\begin{equation*}
\begin{split}
\delta(\phi_{\mu}) = \frac{1}{2}\delta( |(\xi_2 - c_x, k_2 - c_y)|^2_- +  |(\xi'_1 - c'_x, k'_1 - c'_y )|^2_- - A_{\mu}  ), \qquad A_{\mu} = |R|_-^2+|R'|_-^2-\frac{\mu}{2}.
\end{split}
\end{equation*}
The variables $R$ and $R^\prime$ depend only on $(\Xi,K)$, so we may fix them. Therefore with the above notation, we estimate
\begin{equation}\label{DefI}
\begin{aligned}
&\mathcal{I}(\Xi,K)\\&\leq \frac{1}{2}\sup_{\mu\in2\pi\mathbb{Z}}\Big[\sum_{k_2, k'_1} \int_{\mathbb{R}^2}   \mathds{1}_{\{|\xi_2|+|k_2|\lesssim N\}} \mathds{1}_{\{|\xi_1^\prime|+|k_1^\prime|\lesssim N \}} e^{-\frac{1}{2} \left[\vert (\xi_2-c_x,k_2-c_y)\vert_-^2-|R|_-^2\right]^2 }\\ & \qquad \times e^{-\frac{1}{2}\left[\vert (\xi_{1}' -c'_x,k_{1}' -c'_y)\vert^2_--|R'|_-^2\right]^2}\delta( |(\xi_2 - c_x, k_2 - c_y)|^2_- +  |(\xi'_1 - c'_x, k'_1 - c'_y )|^2_- - A_{\mu})  d\xi_2d\xi'_1\Big]\\&\qquad\times\sum_{\mu \in 2\pi \mathbb{Z} } e^{-\frac{1}{16} \mu^2}.
\end{aligned}
\end{equation}
Therefore, we now want to show
\begin{equation}\label{blackI}
\begin{split}
\sum_{|k|,|k^\prime|\lesssim N}\int_{|\xi|,|\xi^\prime|\lesssim N}&e^{-\frac{1}{2}\left[ \vert\vert(\xi,k)-{\vec C}\vert^2_--|R|_-^2\vert^2+\vert \vert(\xi^\prime,k^\prime)-{\vec C}^\prime\vert^2_--|R^\prime|_-^2\vert^2\right]}\\&\times\delta(\vert (\xi,k)-{\vec C}\vert^2_-+\vert (\xi^\prime,k^\prime)-{\vec C^\prime}\vert^2_--A)d\xi d\xi^\prime\lesssim (\log N)^2
\end{split}
\end{equation}
uniformly in $A,R,R^\prime\in\mathbb{R}$, $\vec C, \vec C^\prime\in\mathbb{R}^2$. Notice that, in view of \eqref{DefI}, the second components of $\vec{C}, \vec{C'}$ are in $\frac{1}{2}\mathbb{Z}$. By translation invariance, we may suppose that ${\vec C}=(0,c)$, ${\vec C}^\prime=(0,c^\prime)$ for $c,c^\prime\in\{0, \frac{1}{2} \}$. To proceed further, we define the sets 
\begin{equation}\label{sets}
    \begin{aligned}
        A_j:=\{(\xi,k)\in\mathbb{R}\times\mathbb{Z}: |\Phi(\xi,k)|\in [j,j+1),\,|\xi|+|k|\lesssim N\},\quad j\geq 0,
    \end{aligned}
\end{equation}
where $\Phi(\xi,k):=|(\xi,k)- \vec C|^2_--|R|^2_-$. Analogously, we define the sets $A_j'$ with $\Phi'(\xi',k')$. Therefore, by symmetry, \eqref{blackI} is equivalent to demanding that
\begin{equation}\label{sumI_jp}
    \begin{aligned}
   \sum_{j,p}{\bf I}_{jp} \lesssim (\log N)^2   
    \end{aligned}
\end{equation}
where
\begin{equation*}
    \begin{aligned}
   {\bf I}_{jp}:=\sum_{k,k^\prime}\int_{\xi,\xi^\prime}&\mathds{1}_{A_j}(\xi,k)\mathds{1}_{A_p'}(\xi^\prime,k^\prime)\mathds{1}_{\{|\xi^\prime|\leq |\xi|\}}e^{-\frac{1}{2}\left[ |\Phi(\xi,k)|^2+|\Phi'(\xi',k')|^2\right]}\\&\times\delta(\vert (\xi,k)-{\vec C}\vert^2_-+\vert (\xi^\prime,k^\prime)-{\vec C^\prime}\vert^2_--A)d\xi d\xi^\prime . 
    \end{aligned}
\end{equation*}
Fix $(j,p)$. Note that there are at most finitely many $k$ and $k'$ in the sums corresponding to the integration region $|\xi|, |\xi'|\leq 2$, which is due to the definition of the sets $A_j$ and $A_p^\prime$ and the fact that $R, R'$ have been fixed. Therefore, we may write 
\begin{equation}\label{I_jp}
\begin{aligned}
   &{\bf I}_{jp}\lesssim \\&e^{-\frac{1}{2}(j^2+p^2)}\Big[\sum_{k,k^\prime=O(1)}\int_{|\xi|, |\xi'|\leq 2}\mathds{1}_{A_j}(\xi,k)\mathds{1}_{A_p'}(\xi^\prime,k^\prime)\delta(|\xi|^2+|\xi^\prime|^2-|k-c|^2-|k'-c'|^2-A)d\xi d\xi^\prime \\&+\sum_{k'}\int_{\mathbb{R}}\mathds{1}_{A_p'}(\xi^\prime,k^\prime)\Big(\sum_{|k|\lesssim N}\int_{|\xi|\geq 2}\mathds{1}_{A_j}(\xi,k)\delta(|\xi|^2+|\xi^\prime|^2-|k-c|^2-|k'-c'|^2-A)d\xi \Big)d\xi'\Big]\\&=:e^{-\frac{1}{2}(j^2+p^2)}[ {\bf I}_{jp}^1+ {\bf I}_{jp}^2].
    \end{aligned}
\end{equation}
 To control the above terms, we need the following lemma : 
 \begin{lemma}\label{simpleLemma}
    Let $C_0\in \mathbb{R}$, $N,K\geq 1$ be constants. Then,
    \begin{equation}\label{measure}
    \begin{aligned}
        \sup_{\substack{|a|, |b|\lesssim N}}{\rm mes}(\{(\xi,k)\in \mathbb{R}\times \mathbb{Z}: C_0\leq (\xi-a)^2- (k-b)^2\leq C_0+K,\, |\xi|+|k|\lesssim N\})\lesssim K\log N
    \end{aligned}
    \end{equation}
    where ${\rm mes}$ denotes the product measure of Lebesgue and the counting measures. The implicit constant is independent of $C_0$. Therefore, the sets $A_j$ in \eqref{sets} satisfy ${\rm mes}(A_j)\lesssim \log N$. Moreover, we have
    \begin{equation}\label{SimpleBd1}
\begin{split}
\sup_{A\in\mathbb{R}}\iint_{\mathbb{R}^2} \delta(\zeta^2+\eta^2-A)d\zeta d\eta& =\pi.
\end{split}
\end{equation}
\end{lemma}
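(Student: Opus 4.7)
My plan attacks the two claims of Lemma \ref{simpleLemma} independently. For the delta-function identity \eqref{SimpleBd1}, the approach is direct: I would pass to polar coordinates $(\zeta,\eta) = (r\cos\theta, r\sin\theta)$, exploiting that the integrand is radial. The angular integration contributes $2\pi$, while the radial integral $\int_0^\infty \delta(r^2-A)\,r\,dr$ simplifies via the substitution $s = r^2$ to $\tfrac{1}{2}\int_0^\infty \delta(s-A)\,ds = \tfrac{1}{2}$ when $A > 0$ (and vanishes otherwise). This gives the uniform value $\pi$ whenever $A > 0$, so the supremum is $\pi$.

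The substantive content is the measure bound \eqref{measure}. After the translation $u := \xi - a$, $v := k - b$, the task reduces to bounding $\mathrm{mes}\{(u,v) \in \mathbb{R}\times\mathbb{Z} : u^2 - v^2 \in [C_0, C_0+K],\ |u|+|v|\lesssim N\}$ uniformly in $C_0 \in \mathbb{R}$. I may assume $K \le N^2$, since the whole container has measure $O(N^2)$. For each fixed $v \in \mathbb{Z}$, setting $D := C_0 + v^2$, the admissible $u$ satisfy $u^2 \in [D, D+K]$, and the Lebesgue measure of this set is $0$ when $D+K<0$, at most $2\sqrt{D+K} \le 2\sqrt{K}$ when $D \in [-K,0]$, and $\tfrac{2K}{\sqrt{D+K}+\sqrt{D}} \le K/\sqrt{D}$ when $D>0$.

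Summing these one-dimensional bounds over integer $v$ with $|v| \lesssim N$ is the heart of the argument, and I expect the main obstacle near the critical threshold $v^2 \approx -C_0$ (when $C_0 < 0$), where $1/\sqrt{D}$ blows up. When $C_0 \ge 0$, all $v$ give $D > 0$, and a split of $K\sum_{|v|\lesssim N}(v^2+C_0)^{-1/2}$ at $|v|\sim\sqrt{C_0}$, followed by integral comparison, yields $\lesssim K\log N$. When $C_0 < 0$, set $\mu := \sqrt{-C_0}$ and split the sum by the size of $|v|$: for $|v| \ge 2\mu$, use $v^2 - \mu^2 \gtrsim v^2$ to get $K\log N$; for $|v| \in [\mu,2\mu]$ at integer distance at least one from $\mu$, use $v^2-\mu^2 \ge (|v|-\mu)\mu$ together with $\sum_{1 \le n \le \mu} n^{-1/2} \lesssim \sqrt{\mu}$ to obtain $O(K)$; for the thin shell $v^2 \in [\mu^2 - K, \mu^2]$, the crude bound $2\sqrt{K}$ per integer combined with a count $O(1 + \sqrt{K})$ of integers in this shell again yields $O(K)$; finally the exceptional integer $\lceil \mu \rceil$ (where $D > 0$ may be smaller than $K$) is absorbed into the $2\sqrt{K}$ bound. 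Summing all contributions gives the claimed $K\log N$. The bound $\mathrm{mes}(A_j) \lesssim \log N$ then follows immediately, because $\Phi(\xi,k) + |R|_-^2 = (\xi-c_x)^2 - (k-c_y)^2$ means $A_j$ is contained in the union of two sets of the form in \eqref{measure} with $K = 1$ (corresponding to the decomposition $|\Phi|\in[j,j+1)$ into $\Phi\in(-j-1,-j]\cup[j,j+1)$).
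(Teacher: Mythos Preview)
Your proposal is correct and follows essentially the same strategy as the paper. For \eqref{SimpleBd1} both you and the paper use polar coordinates (the paper phrases it via the co-area formula), and for \eqref{measure} both arguments fix the integer variable, bound the Lebesgue measure in the continuous variable by $O(K/\sqrt{D})$, and then sum. The only difference is in how that sum is organized: the paper (referring back to the proof of Theorem~\ref{mainthmL4est}) uses a dyadic decomposition in the level $D$, observing that $\#\{v : D \in [L,2L]\} \lesssim \sqrt{L}$ so each dyadic scale contributes $O(K)$ and there are $O(\log N)$ scales, whereas you split directly according to the position of $|v|$ relative to $\mu = \sqrt{-C_0}$ and sum by hand. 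Your case analysis is more detailed but equally elementary; the paper's dyadic version is slightly cleaner and avoids the separate treatment of the thin shell and the exceptional integer near $\mu$.
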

\begin{proof}
Note that \eqref{measure} is nothing but restating the estimate for \eqref{3} (by translation the statements are equivalent), see the proof of Theorem \ref{mainthmL4est}. Here we only prove \eqref{SimpleBd1}, which is a specific consequence of the co-area formula : for any Schwartz function $F\in \mathcal{S}(\mathbb{R}^2)$,
$$ \int_{\mathbb{R}^2}F(\zeta^2+\eta^2) d\zeta d\eta =\int_0^{\infty} F(r)dr \int_0^{2\pi}\frac{r d\theta}{2r}=\pi\int_0^{\infty}F(r)dr.
$$
Hence \eqref{SimpleBd1} follows by taking $F$ as a sequence of approximate identity.
\end{proof}
 The identity \eqref{SimpleBd1} implies that 
\begin{align}\label{I_jp^1}
 {\bf I}_{jp}^1\lesssim \sup_{B\in\mathbb{R}}\iint_{\mathbb{R}^2} \delta(\xi^2+(\xi^\prime)^2-B)d\xi d\xi^\prime \sum_{k,k^\prime=O(1)}1\lesssim 1. 
\end{align}
For the term ${\bf I}_{jp}^2$, we proceed as follows
\begin{equation}\label{I_jp^2}
    \begin{aligned}
        {\bf I}_{jp}^2&\lesssim \Big(\sup_{\substack{B\in\mathbb{R}\\
        C\in\mathbb{R}, c\in\frac{1}{2}\mathbb{Z} }}\sum_{|k|\lesssim N}\int_{\mathbb{R}}\mathds{1}_{\big[\sqrt{|k-c|^2+C},\sqrt{|k-c|^2+C+1}\big)}(\xi)\delta(\xi^2-B) d\xi\Big)\sum_{k'}\int_{\mathbb{R}}\mathds{1}_{A_p'}(\xi^\prime,k^\prime)d\xi'\\&={\rm mes}(A_p')\Big(\sup_{\substack{B\in\mathbb{R}\\
C\in\mathbb{R},c\in\frac{1}{2}\mathbb{Z}    
}}\sum_{|k|\lesssim N}\mathds{1}_{\big[\sqrt{|k-c|^2+C},\sqrt{|k-c|^2+C+1}\big)}(\sqrt{B})\frac{1}{2\sqrt{B}}\Big)\\&\lesssim \log N\sup_{C\in\mathbb{R},c\in\frac{1}{2}\mathbb{Z}}\sum_{|k|\lesssim N}\frac{
\mathbf{1}_{ \sqrt{|k-c|^2+C }>1  }
 }{\sqrt{|k-c|^2+C}}\lesssim  (\log N)^2.
    \end{aligned}
\end{equation}
Note that in the estimate above we have used \eqref{measure} and the following identity
\begin{equation*}
    \begin{aligned}
        \int_{-\infty}^{\infty}f(x)\delta(g(x))dx=\sum_i\frac{f(x_i)}{|g'(x_i)|}
    \end{aligned}
\end{equation*}
where $g(x_i)=0$ with $g'(x_i)\neq 0$. Note also that $\sqrt{|k-c|^2+C}>1$, since in the support of $\xi$-integration region of ${\bf I}_{jp}^2$, we have the relation that $|k-c|^2+C+1\geq |\xi|^2$ and $|\xi|\geq 2$. Consequently, combining \eqref{I_jp}, \eqref{I_jp^1}, and \eqref{I_jp^2}, the estimate \eqref{sumI_jp} follows. 
\section{Proof of Theorem \ref{maintm}}\label{SectNonlinear}
In this section, based on the theory of critical function spaces, we establish the well-posedness theory for \eqref{eq:HNLS}, and for this purpose, we start by proving an associated nonlinear estimate. The following lemma will be of fundamental use in such an estimate, where, using the atomic structure of $U^p$, we transfer the Strichartz estimate \eqref{Stricremove} in Proposition \ref{removeglobalstric}  to an estimate with respect to the function spaces introduced in Section \ref{preliminaries}.  
\begin{lemma}\label{Strichartzkilvis} 
Let $I\subset \mathbb{R}$ be a time interval. For all $C\in \mathscr{C}_N$ and admissible pair $(p,q)$, we have
\begin{equation}\label{StricartzwithCcubes}
\begin{aligned}
  \Vert \mathds{1}_{I}\cdot P_C u\Vert_{\ell_{\gamma}^qL^p([\gamma,\gamma+1]\times \mathbb{R}\times \mathbb{T})} \lesssim N^{1-\frac{4}{p}}\Vert P_C u\Vert_{U^{\min\{p,q\}}_{\Box}(I;L^2)}\lesssim N^{1-\frac{4}{p}}\Vert P_C u\Vert_{Y^0(I)},
\end{aligned}
\end{equation}
where the implicit constants are independent of the time interval $I$.
\end{lemma}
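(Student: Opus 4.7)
The plan is to transfer the scale-invariant global Strichartz estimate \eqref{Stricremove} from the centered projection $P_{\leq N}$ to a general cube projection $P_C$ via a hyperbolic Galilean change of variables, then to lift the resulting linear bound to $U^r_\Box$ via atomic decomposition with $r := \min(p, q) \geq 4$, and finally to pass from $U^r_\Box$ to $Y^0$ through standard function-space embeddings.

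First I would observe that a cube $C \in \mathscr{C}_N$ centered at $(\xi_0, \eta_0) \in \mathbb{R} \times \mathbb{Z}$ may be reduced to a centered cube of the same side length by the hyperbolic Galilean boost as in \cite{YW13_2}; the corresponding space-time shift $(x, y) \mapsto (x - 2t\xi_0, y + 2t\eta_0)$ is an isometry of $L^p_{t, x, y}$ on $\mathbb{R}\times\mathbb{R}\times\mathbb{T}$ and sends each unit time slab to another unit time slab, so Proposition \ref{removeglobalstric} immediately yields
\begin{equation*}
\|e^{it\Box} P_C \phi\|_{\ell^q_\gamma L^p([\gamma, \gamma + 1] \times \mathbb{R} \times \mathbb{T})} \lesssim N^{1 - \frac{4}{p}} \|P_C \phi\|_{L^2(\mathcal{M})}
\end{equation*}
for every admissible pair $(p, q)$. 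By the atomic decomposition of $U^r_\Box$ and the triangle inequality in $\ell^q_\gamma L^p$, proving the first inequality in \eqref{StricartzwithCcubes} reduces to the bound $\|\mathds{1}_I e^{it\Box} P_C a\|_{\ell^q_\gamma L^p} \lesssim N^{1 - 4/p}$ for an arbitrary $U^r$-atom $a(t) = \sum_{k=1}^K \mathds{1}_{[t_{k-1}, t_k)}(t) \psi_{k-1}$ with $\sum_k \|\psi_{k-1}\|_{L^2}^r = 1$ and $P_C \psi_{k-1} = \psi_{k-1}$.

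For such an atom, the intervals $[t_{k-1}, t_k)$ are pairwise disjoint, so $\|\mathds{1}_I \mathds{1}_{[\gamma, \gamma + 1]} e^{it\Box} P_C a\|_{L^p_{t, x, y}}^p = \sum_k G_k(\gamma)^p$ with $G_k(\gamma) := \|\mathds{1}_{I \cap [t_{k-1}, t_k) \cap [\gamma, \gamma + 1]} e^{it\Box} \psi_{k-1}\|_{L^p}$. The task is therefore to bound $\|G_k\|_{\ell^q_\gamma \ell^p_k}$: when $p \leq q$ I would swap norms by Minkowski to $\ell^p_k \ell^q_\gamma$, and when $p > q$ I would use the embedding $\ell^q_k \hookrightarrow \ell^p_k$ to reduce to $\ell^q_\gamma \ell^q_k$. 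Either case leaves an $\ell^r_k$-sum of $\|G_k\|_{\ell^q_\gamma} \lesssim N^{1 - 4/p} \|\psi_{k-1}\|_{L^2}$, which by the atom normalization produces the required bound $N^{1 - 4/p}$.

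The second inequality then follows from the standard Hadac-Herr-Koch embedding $V^2_\Box \hookrightarrow U^r_\Box$ (valid since $r > 2$), combined with a square-function bound $\|P_C u\|_{V^2_\Box L^2}^2 \leq \|P_C u\|_{Y^0}^2$: for any partition $\{t_k\}$, Plancherel and the orthogonality of the Fourier supports of $\{P_{C_z} P_C u\}_{z \in \mathbb{Z}^2}$ give
\begin{equation*}
\sum_k \bigl\|e^{-it_k\Box} P_C u(t_k) - e^{-it_{k-1}\Box} P_C u(t_{k-1})\bigr\|_{L^2}^2 = \sum_{z \in \mathbb{Z}^2} \sum_k \bigl\|e^{-it_k\Box} P_{C_z} P_C u(t_k) - e^{-it_{k-1}\Box} P_{C_z} P_C u(t_{k-1})\bigr\|_{L^2}^2,
\end{equation*}
and, since the same partition works for every $z$, the right-hand side is bounded by $\sum_z \|P_{C_z} P_C u\|_{V^2_\Box L^2}^2 = \|P_C u\|_{Y^0}^2$. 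The main obstacle is the bookkeeping in the $\ell^q_\gamma \ell^p_k$ summation, arranging in both cases $p \leq q$ and $p > q$ that the exponent produced by the Minkowski swap or the sequence embedding exactly matches the atom normalization exponent $r = \min(p, q)$; once this case distinction is handled, the remaining ingredients are routine consequences of the $U^p$-$V^p$ theory.
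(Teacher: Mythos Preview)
Your proof is correct and follows essentially the same approach as the paper: reduce to a single $U^r$-atom with $r=\min\{p,q\}$, apply the global Strichartz estimate \eqref{Stricremove} (transferred to $P_C$ by the hyperbolic Galilean boost, which the paper invokes implicitly), and deduce the second inequality from the embedding $Y^0\hookrightarrow V^2_\Box L^2\hookrightarrow U^r_\Box L^2$ of Remark~\ref{remark2.6}. The only organizational difference is that the paper handles the $\ell^q_\gamma\ell^p_k$ bookkeeping in one stroke---first replacing $\ell^p_k$ by $\ell^r_k$ (since $r\leq p$) and then swapping $\ell^q_\gamma\ell^r_k\to\ell^r_k\ell^q_\gamma$ by Minkowski (since $r\leq q$)---whereas you split into the cases $p\leq q$ and $p>q$; both routes are equivalent.
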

\begin{proof}
The second inequality follows from the property of $X^0,Y^0$ (see Remark \ref{remark2.6}), so we only prove the first one. Denote $r=\min\{p,q\}$. By definition, it suffices to prove the estimate when $e^{-it\Box}P_Cu$ is an atom on $I$ with a partition $\bigcup_{k=1}^K[t_{k-1},t_k)$, i.e. 
$$ P_Cu=\sum_{k=1}^K \mathds{1}_{[t_{k-1},t_k)}e^{it\Box}P_C\phi_k,\quad \sum_{k=1}^K\|P_C\phi_k\|_{L_{\mathbf{x}}^2}^r=1.
$$
In the following, we use the notation $\ell_k^p$ for $1\leq k\leq K$. Then, it follows that
\begin{align*}
\|\mathds{1}_I(t)P_Cu\|_{L^p_{t,\mathbf{x}}([\gamma,\gamma+1]\times\mathcal{M})}\sim&\|\mathds{1}_{[t_{k-1},t_k)}e^{it\Box}P_C\phi_k \|_{\ell_{k}^pL^p_{t,\mathbf{x}}([\gamma,\gamma+1]\times\mathcal{M})}\\ \leq
&\|\mathds{1}_{[t_{k-1},t_k)}e^{it\Box}P_C\phi_k \|_{\ell_{k}^rL^p_{t,\mathbf{x}}([\gamma,\gamma+1]\times\mathcal{M})}. 
\end{align*}
Taking the $\ell^q$-norm for $\gamma\in\mathbb{Z}$
on both sides, we obtain by Minkowski and Proposition 
\ref{removeglobalstric} that 
\begin{align*} \|\mathbf{1}_I(t)P_Cu\|_{\ell_{\gamma}^qL^p_{t,\mathbf{x}}([\gamma,\gamma+1]\times\mathcal{M})}\leq &\|\mathds{1}_{[t_{k-1},t_k)}e^{it\Box}P_C\phi_k \|_{\ell_k^r\ell_{\gamma}^qL^p_{t,\mathbf{x}}([\gamma,\gamma+1]\times\mathcal{M})}\\
\lesssim & N^{1-\frac{4}{p}}
\|P_C\phi_k\|_{\ell_k^rL_{\mathbf{x}}^2}=N^{1-\frac{4}{p}}.
\end{align*}
This completes the proof of Lemma \ref{StricartzwithCcubes}.
\end{proof}
In the following lemma, we take into account $k\geq2$, as the global Strichartz estimates \eqref{Stricremove} are in full effect for such powers; for the cubic case, we can exploit Theorem \ref{mainthmL4est} to achieve the result of Theorem \ref{maintm}, but we skip the details.
\begin{lemma}\label{lemma1}
Let $I\subset \mathbb{R}$. Assume that $k\geq 2$ is fixed and $s\geq s_{2,k}=1-\frac{1}{k}$. Then we have
\begin{align}\label{mullinest}
\Big|\int_I\int_{\mathcal{M}}\prod_{l=1}^{2k+1}\widetilde{u}_l\overline{v}\,dxdt\Big|\lesssim \Vert v\Vert_{Y^{-s}(I)}\Vert u_1\Vert_{Y^{s}(I)}\prod_{l=2}^{2k+1}\Vert u_l\Vert_{Y^{s_{2,k}}(I)}
\end{align}
where $\widetilde{u}_l\in \{u_l, \overline{u}_l\}$, and the implicit constants are independent of the time interval $I$.
\end{lemma}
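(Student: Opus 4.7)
The plan follows the standard paraproduct-type analysis via a Littlewood--Paley (LP) decomposition combined with H\"older's inequality, with the scale-invariant Strichartz estimate of Lemma~\ref{Strichartzkilvis} as the central input. First, I would dyadically decompose $v=\sum_{N_0}P_{N_0}v$ and $u_l=\sum_{N_l}P_{N_l}u_l$. Since the integrand is symmetric in the $u_l$'s (as we are integrating a product) and $\|u\|_{Y^{s_{2,k}}}\leq \|u\|_{Y^s}$ whenever $s\geq s_{2,k}$, I may assume without loss of generality that $N_1=\max_{1\leq l\leq 2k+1}N_l$; if some $u_j$ for $j\geq 2$ carried the largest frequency, swapping $u_1$ and $u_j$ would only strengthen the estimate. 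Fourier support considerations then force $N_0\lesssim N_1$.

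Second, I would apply H\"older's inequality and invoke Lemma~\ref{Strichartzkilvis} on each LP piece. The natural scale-invariant Strichartz pair, at which $\|P_Nu\|_{\ell^qL^p}\lesssim \|P_Nu\|_{Y^{s_{2,k}}}$ holds with no loss in $N$, is the admissible $(p,q)=(4k,\,8k/(2k-1))$, which satisfies $p>4$ and $q\in[4,8)$ for $k\geq 2$. A direct computation using admissibility $2/q_i+1/p_i=1/2$ shows that H\"older on $2k+2$ purely admissible factors is compatible with the constraints $\sum 1/p_i=\sum 1/q_i=1$ only when $k=2$ (in which case $(p,q)=(6,6)$ works uniformly). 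For $k\geq 3$ the high-frequency pair $(v_{N_0},u_{1,N_1})$ must therefore be handled separately, for instance via the endpoint Strichartz $\|P_Nu\|_{\ell^4L^\infty}\lesssim N\|P_Nu\|_{Y^0}$ of Proposition~\ref{Stricinfity} combined with Bernstein's inequality on the low-frequency factors, while the remaining $2k$ factors $u_l$ ($l\geq 2$) are placed into $\ell^{4k}L^{4k}_{t,\mathbf{x}}$ via the embedding $\ell^{8k/(2k-1)}\hookrightarrow \ell^{4k}$ (valid since $8k/(2k-1)\leq 4k$ for $k\geq 2$).

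Third, the summation in the dyadic parameters is carried out by Cauchy--Schwarz. Each $N_l$-sum for $l\geq 2$ converges against the $\ell^2_{N_l}$-square-summability built into the $Y^{s_{2,k}}$-norm, thanks to a strictly negative power of $N_l$ produced in the H\"older step. For the joint $(N_0,N_1)$-sum, the constraint $N_0\lesssim N_1$ allows one to sum in $N_0$ by Cauchy--Schwarz against $\|v\|_{Y^{-s}}$, converting any positive power of $N_0$ into a power of $N_1$; the residual $N_1$-sum is then closed against $\|u_1\|_{Y^s}$, using $\|P_{N_1}u_1\|_{Y^{s_{2,k}}}=N_1^{s_{2,k}-s}\|P_{N_1}u_1\|_{Y^s}$ together with $s\geq s_{2,k}$ to absorb any residual positive power of $N_1$.

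\emph{Main obstacle.} The principal difficulty is to treat the high-high interaction between $v_{N_0}$ and $u_{1,N_1}$ (in the regime $N_0\sim N_1$) without incurring any $N^\epsilon$ loss, since any such loss would propagate through the subsequent dyadic summations and prevent the critical endpoint $s=s_{2,k}$ from being reached. Consequently the $\epsilon$-lossy local $L^4$ estimate of Theorem~\ref{mainthmL4est} cannot be used at this step; one must rely on the sharp scale-invariant global Strichartz of Proposition~\ref{removeglobalstric}. Balancing the Bernstein upgrades, endpoint Strichartz estimates, and the scale-invariant admissible pair $(4k,8k/(2k-1))$ so that every summation closes at the critical regularity is the core technical content of the argument.
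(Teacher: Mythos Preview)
Your outline has the right shape (Littlewood--Paley decomposition, H\"older with scale-invariant Strichartz, dyadic summation), but there is a genuine gap: you are missing the cube/almost-orthogonality decomposition that transfers the Strichartz loss on the top-frequency factors $P_{N_0}v$ and $P_{N_1}u_1$ down to the second and third highest scales $N_2,N_3$. To see why this is essential, run your $(p,q)=(6,6)$ scheme for $k=2$ in the regime $N_0\sim N_1\geq N_2\geq\cdots\geq N_5$: each factor in $\ell^6L^6$ costs $N_i^{1/3}$, and after summing the four low-frequency factors (each carrying $N_l^{1/3-s_{2,2}}=N_l^{-1/6}$) you are left with
\[
\sum_{N_0\sim N_1}N_0^{1/3+s}N_1^{1/3-s}\|P_{N_0}v\|_{Y^{-s}}\|P_{N_1}u_1\|_{Y^s}\ \sim\ \sum_{N_1}N_1^{2/3}\|P_{N_1}v\|_{Y^{-s}}\|P_{N_1}u_1\|_{Y^s},
\]
which diverges. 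Your suggestion to ``absorb any residual positive power of $N_1$'' via $s>s_{2,k}$ is unavailable at the critical endpoint $s=s_{2,k}$, which the lemma must cover. The paper fixes this by first splitting the $(2k+2)$-fold product into two $L^2_{t,\mathbf{x}}$ groups of $k+1$ functions via Cauchy--Schwarz, then partitioning the Fourier support of $P_{N_0}v$ (resp.\ $P_{N_1}u_1$) into cubes $C_\alpha$ of side $N_2$ (resp.\ $C_\beta$ of side $N_3$) and using almost orthogonality; on each cube, Lemma~\ref{Strichartzkilvis} yields $\|P_{C_\alpha}P_{N_0}v\|_{\ell^qL^p}\lesssim N_2^{1-4/p}\|P_{C_\alpha}P_{N_0}v\|_{Y^0}$, so the loss lands on $N_2$ rather than $N_0$ and produces the summable factors $(N_{2k}/N_2)^\delta(N_{2k+1}/N_3)^\delta$.

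Your specific $k\geq 3$ scheme also fails already at the level of H\"older exponents: placing $v,u_1$ in $\ell^4L^\infty$ and the remaining $2k$ factors in $\ell^{4k}L^{4k}$ gives inner $L^p_{t,\mathbf{x}}$ exponents summing to $2/\infty+2k/(4k)=1/2\neq 1$ on each unit time interval, so the product is not controlled in $L^1_{t,\mathbf{x}}$. Moreover, the scale-invariant pair $(4k,q_{2,k})$ (or its embedding into $\ell^{4k}L^{4k}$) applied to $u_l$ for $l\geq 2$ yields exactly $N_l^{1-4/(4k)-s_{2,k}}=N_l^0$, not a strictly negative power, so your claimed summability over $N_l$ is unjustified. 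The paper's remedy for $k\geq 3$ is a three-exponent H\"older $(p,\,p_{2,k}=4k,\,\tilde p)$ with $4<p<8k/(k+1)$, balanced on the $\ell$-side by embedding into $(\ell^8,\ell^{8(k-2)},\ell^8)$; this creates a genuine gap $\delta=s_{2,k}-(2-8/p)=1-4/\tilde p-s_{2,k}>0$ on both the near-$L^4$ and the large-$\tilde p$ factors, which---in combination with the cube decomposition---drives all the dyadic sums.
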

\begin{proof}
 When $k=2$ the proof follows as that in \cite[Lemma 5.2]{B21}, so we consider $k\geq3$. In what follows, we frequently ignore the time domain (as well as the spatial domain) in notations, but it must be understood that all the norms we consider here are taken over the time domain $I$ (where $I=\mathbb{R}$ is also allowed). We shall next introduce the mixed-norm H\"older's inequality exponents to be used in the proof of \eqref{mullinest}, which are given by some restrictions. To this end, let $p$ satisfy the following  
\begin{equation}\label{psatisfy}
\begin{aligned}
4<p<\frac{8k}{k+1}\quad \text{for}\,\,k\geq 3,
\end{aligned}
\end{equation}
where the upper bound we choose ensures that
\begin{align*}\label{s_nm<0}
  2-\frac{8}{p}-s_{2,k}<0.  
\end{align*}
Denote $\delta:=-2+\frac{8}{p}+s_{2,k}>0$ and the exponent associated with $s_{2,k}$ by $p_{2,k}:=4k$. Also let $q_{2,k}:=\frac{4p_{2,k}}{p_{2,k}-2}$ so that the pair $(p_{2,k},q_{2,k})$ is admissible. Then, by Lemma \ref{Strichartzkilvis}, we have
\begin{equation}\label{pnmsnm}
\begin{aligned}
  \Vert \mathds{1}_I\cdot P_Nu\Vert_{\ell^{q_{2,k}}L^{p_{2,k}}([\gamma,\gamma+1]\times \mathcal{M})} \lesssim N^{s_{2,k}}\Vert P_Nu\Vert_{Y^0(I)}. 
\end{aligned}
\end{equation}
Hence, to obtain \eqref{mullinest}, we apply H\"older's inequality with the following $L^p$ exponents
\begin{align}\label{pp_nmq2}
    \frac{2}{p}+\frac{k-2}{p_{2,k}}+\frac{1}{\widetilde{p}}=\frac{1}{2},
\end{align}
where $p$ is as in \eqref{psatisfy} and $p_{2,k}$ is as introduced above. Note that $\widetilde{p}>p_{2,k}\geq4$ (for $k\geq 1$) and
\begin{align}\label{necessarydelta}
  1-\frac{4}{\widetilde{p}}-s_{2,k}=-2+\frac{8}{p}+s_{2,k}=\delta>0.
\end{align}
Next, we assume that $(p,q)$ and $(\widetilde{p},\widetilde{q})$ are admissible pairs as in Definition \ref{admissibledefn}. Since we have that
\begin{align*}
    \frac{2}{q}+\frac{k-2}{q_{2,k}}+\frac{1}{\widetilde{q}}\neq\frac{1}{2},
\end{align*}
we instead take $\ell^q$ H\"older exponents as $(8, 8(k-2),8)$ to reach the desired equality in \eqref{pp_nmq2} when substituting these exponents for $(p,p_{2,k},\widetilde{p})$ in \eqref{pp_nmq2}. Then, exploiting the embeddings $\ell^q, \ell^{\widetilde{q}}\hookrightarrow \ell^8$ $(\text{since}\,\,p,\widetilde{p}>4)$, and $\ell^{q_{2,k}}\hookrightarrow \ell^{8(k-2)}$ (for $k\geq3$), we still benefit from the global Strichartz estimates \eqref{StricartzwithCcubes} with the aforementioned admissible pairs. By Littlewood-Paley decomposition, we write 
$$ u_l=\sum_{N_l}P_{N_l}u_l,\quad l\geq 1,\quad  v=\sum_{N_0}P_{N_0}v,
$$
where the summation is taken over dyadic numbers $N_0,N_l\geq 1$. Expanding the left hand side of  \eqref{mullinest}, by symmetry, it suffices to consider the following cases.
\vspace{0.12cm}
\noindent  
\\{\bf Case A.} $N_0\sim N_1\geq N_2\geq \cdots\geq N_{2k+1}$.\vspace{0.12cm} \\
Let $\{C_{\alpha}\}$ be a cube partition of $\mathbb{R}\times \mathbb{Z}$ into cubes of size $N_2$ and $\{C_{\beta}\}$ be a cube partition of size $N_3$. Since there are finitely many cubes of side length $\sim N_2$ that cover the spatial Fourier support of $P_{N_2}u_2\prod_{l=2}^{k}P_{N_{2l}}u_{2l}$, we see that the terms 
$P_{C_{\alpha}}P_{N_0}v\prod_{l=1}^{k}P_{N_{2l}}u_{2l}$ are almost orthogonal in $L^2_{x,y}$. The same applies to $P_{C_{\beta}}P_{N_1}v\prod_{l=1}^{k}P_{N_{2l+1}}u_{2l+1}$. So we have:
\begin{equation}\label{beforedecomp}
\begin{aligned}
&\Big|\int_{I}\int_{\mathcal{M}}\prod_{l=1}^{2k+1}\widetilde{u}_l\overline{v}\,dxdydt\Big|\lesssim \sum_{N_0\sim N_1\geq \cdots \geq N_{2k+1}} \Big\Vert P_{N_0}v\prod_{l=1}^{2k+1}P_{N_l}u_{l}\Big\Vert_{L^1_{t,x,y}}\\&\lesssim  \sum_{N_0\sim N_1\geq \cdots \geq N_{2k+1}} \Bigg(\sum_{\alpha\in\mathbb{Z}}\Big\Vert P_{C_{\alpha}}P_{N_0}v\prod_{l=1}^{k}P_{N_{2l}}u_{2l} \Big\Vert_{L^2_{t,x,y}}^2\Bigg)^{\frac{1}{2}}\\&\quad\quad\times\Bigg(\sum_{\beta\in\mathbb{Z}}\Big\Vert P_{C_{\beta}}P_{N_{1}}u_{1}\prod_{l=1}^{k}P_{N_{2l+1}}u_{2l+1}  \Big\Vert_{L^2_{t,x,y}}^2\Bigg)^{\frac{1}{2}}.
\end{aligned}
\end{equation}
For $\gamma\in\mathbb{Z}$, set $I_{\gamma}:=I\cap{[\gamma,\gamma+1)}$ and decompose 
\begin{align*}
    I=\bigcup_{\gamma\in\mathbb{Z}}I_{\gamma}
\end{align*}
 to estimate $L^2$ norm in \eqref{beforedecomp} (by using the fact that the intervals $I_{\gamma}$ are disjoint):
 \begin{equation}
\begin{aligned}\label{L^2first}
& \Big\Vert P_{C_{\alpha}}P_{N_0}v\prod_{l=1}^{k}P_{N_{2l}}u_{2l} \Big\Vert_{L^2_{t,x,y}}^2=\sum_{\gamma\in \mathbb{Z}}  \Big\Vert P_{C_{\alpha}}P_{N_0}v\prod_{l=1}^{k}P_{N_{2l}}u_{2l} \Big\Vert_{L^2_{t,\mathbf{x}}(I_{\gamma}\times \mathcal{M})}^2\\&\lesssim \Big[\Vert P_{C_{\alpha}}P_{N_0}v\Vert_{\ell^8_{\gamma}L^p_{t,\mathbf{x}}(I_{\gamma}\times \mathcal{M})}\Vert P_{N_2}u_2\Vert_{\ell^8_{\gamma}L^p_{t,\mathbf{x}}(I_{\gamma}\times \mathcal{M})}\\&\qquad\qquad\times\prod_{l=2}^{k-1}\Vert P_{N_{2l}}u_{2l}\Vert_{\ell^{8(k-2)}_{\gamma}L^{p_{2,k}}_{t,\mathbf{x}}(I_{\gamma}\times \mathcal{M})}\Vert P_{N_{2k}}u_{2k}\Vert_{\ell^8_{\gamma}L^{\widetilde{p}}_{t,\mathbf{x}}(I_{\gamma}\times \mathcal{M})}\Big]^2\\&\lesssim \Big[\Vert P_{C_{\alpha}}P_{N_0}v\Vert_{\ell^q_{\gamma}L^p_{t,\mathbf{x}}(I_{\gamma}\times \mathcal{M})}\Vert P_{N_2}u_2\Vert_{\ell^q_{\gamma}L^p_{t,\mathbf{x}}(I_{\gamma}\times \mathcal{M})}\\&\qquad\qquad\times\prod_{l=2}^{k-1}\Vert P_{N_{2l}}u_{2l}\Vert_{\ell^{q_{2,k}}_{\gamma}L^{p_{2,k}}_{t,\mathbf{x}}(I_{\gamma}\times \mathcal{M})}\Vert P_{N_{2k}}u_{2k}\Vert_{\ell^{\widetilde{q}}_{\gamma}L^{\widetilde{p}}_{t,\mathbf{x}}(I_{\gamma}\times \mathcal{M})}\Big]^2\\&\lesssim \Big[ N_2^{2-\frac{8}{p}-s_{2,k}}N_{2k}^{1-\frac{4}{\widetilde{p}}-s_{2,k}}\Vert P_{C_{\alpha}}P_{N_0}v\Vert_{Y^0}\prod_{l=1}^{k}\Vert P_{N_{2l}}u_{2l}\Vert_{Y^{s_{2,k}}}\Big]^2
\end{aligned}
\end{equation}
where we have used H\"older's inequality with exponents as in \eqref{pp_nmq2}, the embeddings $\ell^q, \ell^{\widetilde{q}}\hookrightarrow \ell^8$, $\ell^{q_{2,k}}\hookrightarrow \ell^{8(k-2)}$, and the estimates \eqref{StricartzwithCcubes}, \eqref{pnmsnm}. Similarly, we have
\begin{equation}
\begin{aligned}\label{L^2second}
& \Big\Vert P_{C_{\beta}}P_{N_1}u_1\prod_{l=1}^{k}P_{N_{2l+1}}u_{2l+1} \Big\Vert_{L^2_{t,x,y}}\lesssim N_3^{2-\frac{8}{p}-s_{2,k}}N_{2k+1}^{1-\frac{4}{\widetilde{p}}-s_{2,k}}\Vert P_{C_{\beta}}P_{N_1}u_1\Vert_{Y^0}\prod_{l=1}^{k}\Vert P_{N_{2l+1}}u_{2l+1}\Vert_{Y^{s_{2,k}}}.
\end{aligned}
\end{equation}
Therefore, inserting \eqref{L^2first} and \eqref{L^2second} into \eqref{beforedecomp} and using the expression \eqref{necessarydelta}, we get
\begin{align*}
&\text{RHS}\,\eqref{beforedecomp}\lesssim\\&\sum_{N_0\sim N_1\geq \cdots \geq N_{2k+1}}  \Big(\frac{N_{2k}}{N_2}\Big)^{\delta}\Big(\frac{N_{2k+1}}{N_3}\Big)^{\delta}\Big(\sum_{\alpha\in\mathbb{Z}}\Vert P_{C_{\alpha}}P_{N_0}v\Vert_{Y^0}^2\Big)^{\frac{1}{2}} \Big(\sum_{\beta\in\mathbb{Z}}\Vert P_{C_{\beta}}P_{N_1}u_1\Vert_{Y^0}^2\Big)^{\frac{1}{2}}\prod_{l=2}^{2k+1}\Vert P_{N_{l}}u_{l}\Vert_{Y^{s_{2,k}}} \\&\sim \sum_{N_0\sim N_1\geq \cdots \geq N_{2k+1}} \Big(\frac{N_{2k}}{N_2}\Big)^{\delta}\Big(\frac{N_{2k+1}}{N_3}\Big)^{\delta}\Vert P_{N_0}v\Vert_{Y^0}\Vert P_{N_1}u_1\Vert_{Y^0}\prod_{l=2}^{2k+1}\Vert P_{N_{l}}u_{l}\Vert_{Y^{s_{2,k}}}\\&\lesssim \sum_{\substack{N_0, N_1\\ N_0\sim N_1}}\Big(\frac{N_{0}}{N_1}\Big)^{s}\Vert P_{N_0}v\Vert_{Y^{-s}}\Vert P_{N_1}u_1\Vert_{Y^s}\Bigg(\sum_{\substack{N_2, N_3\\ N_2\geq N_3}}\Big(\frac{N_{3}} {N_2}\Big)^{\frac{\delta}{2k}}\Vert P_{N_2}u_2\Vert_{Y^{s_{2,k}}}\Vert P_{N_3}u_3\Vert_{Y^{s_{2,k}}}\Bigg)\cdots \\&\quad \quad\times\Bigg(\sum_{\substack{N_{2k}, N_{2k+1}\\ N_{2k}\geq N_{2k+1}}}\Big(\frac{N_{2k+1}}{N_{2k}}\Big)^{\frac{\delta}{2k}}\Vert P_{N_{2k}}u_{2k}\Vert_{Y^{s_{2,k}}}\Vert P_{N_{2k+1}}u_{2k+1}\Vert_{Y^{s_{2,k}}}\Bigg) \\&\lesssim \Big(\sum_{N_0}\Vert P_{N_0}v\Vert_{Y^{-s}}^2\Big)^{\frac{1}{2}}\Big(\sum_{N_1}\Vert P_{N_1}u_1\Vert_{Y^{s}}^2\Big)^{\frac{1}{2}}\Big(\sum_{\substack{N_2, N_3\\ N_2\geq N_3}}\Big(\frac{N_{3}} {N_2}\Big)^{\frac{\delta}{2k}}\Vert P_{N_2}u_2\Vert_{Y^{s_{2,k}}}^2\Big)^{\frac{1}{2}}\\&\quad\quad\times\Big(\sum_{\substack{N_2, N_3\\ N_2\geq N_3}}\Big(\frac{N_{3}} {N_2}\Big)^{\frac{\delta}{2k}}\Vert P_{N_3}u_3\Vert_{Y^{s_{2,k}}}^2\Big)^{\frac{1}{2}} \cdots  \Big(\sum_{\substack{N_{2k}, N_{2k+1}\\ N_{2k}\geq N_{2k+1}}}\Big(\frac{N_{2k+1}} {N_{2k}}\Big)^{\frac{\delta}{2k}}\Vert P_{N_{2k}}u_{2k}\Vert_{Y^{s_{2,k}}}^2\Big)^{\frac{1}{2}}\\&\quad\quad\times\Big(\sum_{\substack{N_{2k}, N_{2k+1}\\ N_{2k}\geq N_{2k+1}}}\Big(\frac{N_{2k+1}} {N_{2k}}\Big)^{\frac{\delta}{2k}}\Vert P_{N_{2k+1}}u_{2k+1}\Vert_{Y^{s_{2,k}}}^2\Big)^{\frac{1}{2}}\\&\lesssim \Vert v\Vert_{Y^{-s}}\Vert u_1\Vert_{Y^{s}}\prod_{l=2}^{2k+1}\Vert u_l\Vert_{Y^{s_{2,k}}}.
\end{align*}
\noindent 
\\{\bf Case B.} $N_1\sim N_2\geq N_0\geq N_3\geq  \cdots\geq N_{2k+1}$. \vspace{0.12cm}\\
We use the cube decomposition with respect to $N_0$ and $N_3$ size cubes in this case, and then proceed by applying mixed-norm H\"older's inequality with the same exponents as in the previous case, and Strichartz inequalities \eqref{StricartzwithCcubes}, \eqref{pnmsnm}, together with the identity \eqref{necessarydelta} to arrive at
\begin{align*}
&\Big|\int_{I}\int_{\mathcal{M}}\prod_{l=1}^{2k+1}\widetilde{u}_l\overline{v}\,dxdydt\Big|\\&\lesssim \sum_{N_1\sim N_2\geq N_0\geq \cdots \geq N_{2k+1}} \Big\Vert \prod_{l=0}^{k}P_{N_{2l+1}}u_{2l+1} \Big\Vert_{L^2_{t,x,y}}\Big\Vert P_{N_0}v\prod_{l=1}^{k}P_{N_{2l}}u_{2l} \Big\Vert_{L^2_{t,x,y}}\\&\lesssim \sum_{N_1\sim N_2\geq N_0\geq \cdots \geq N_{2k+1}} \Big(\frac{N_{2k}}{N_0}\Big)^{\delta}\Big(\frac{N_{2k+1}}{N_3}\Big)^{\delta}N_0^{s+s_{2,k}}N_1^{-s-s_{2,k}}\Vert P_{N_{0}}v \Vert_{Y^{-s}}\Vert P_{N_1}u_1\Vert_{Y^{s}}\\&\quad\quad\times\Vert P_{N_2}u_2\Vert_{Y^{s_{2,k}}}\prod_{l=3}^{2k+1}\Vert P_{N_{l}}u_{l} \Vert_{Y^{s_{2,k}}}\\&\lesssim \sum_{\substack{N_1,N_2\\N_1\sim N_2}}\Vert P_{N_1}u_1\Vert_{Y^s}\Vert P_{N_2}u_2\Vert_{Y^{s_{2,k}}}\sum_{N_0\geq N_3\geq  \cdots \geq N_{2k+1}}\Big(\frac{N_{2k}}{N_0}\Big)^{\delta}\Big(\frac{N_{2k+1}}{N_3}\Big)^{\delta}\\&\quad\quad\times\Vert P_{N_{0}}v \Vert_{Y^{-s}}\prod_{l=3}^{2k+1}\Vert P_{N_{l}}u_{l} \Vert_{Y^{s_{2,k}}}\\&\lesssim \Vert v\Vert_{Y^{-s}}\Vert u_1\Vert_{Y^{s}}\prod_{l=2}^{2k+1}\Vert u_l\Vert_{Y^{s_{2,k}}},
\end{align*}
where to the last step, the analysis is similar to the previous case. This completes the proof of Lemma \ref{lemma1}
\end{proof}
\begin{proposition}\label{keyproposition}
 Let $s\geq s_{2,k}$ be fixed. Then, for any time interval $I\subset \mathbb{R}$ and $u_j\in X^s(I)$, $j=1,...,2k+1$, we have
 \begin{align}\label{mainest}
 \Big\Vert \int_0^te^{i(t-\tau)\Box} \prod_{j=1}^{2k+1}\widetilde{u}_j\,d\tau\Big\Vert_{X^s(I)}\lesssim \sum_{l=1}^{2k+1}\Vert u_l\Vert_{X^s(
 I)}\prod_{\substack{j=1\\j\neq l}}^{2k+1}\Vert u_j\Vert_{X^{s_{2,k}}(I)}
 \end{align}
 where $\widetilde{u}_j\in \{u_j, \overline{u}_j\}$, and the implicit constants are independent of the time interval $I$.
\end{proposition}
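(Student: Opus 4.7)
The plan is to combine the multilinear estimate of Lemma~\ref{lemma1} with the standard duality characterization of the $X^s$-norm applied to Duhamel integrals. The key observation is that the hard analytic work has already been done inside Lemma~\ref{lemma1}, so the proof of Proposition~\ref{keyproposition} is essentially a clean packaging argument built on three ingredients: duality of $U^2_\Box$ and $V^2_\Box$, the symmetry of the nonlinearity, and the embedding $X^s \hookrightarrow Y^s$ recorded in Remark~\ref{remark2.6}.

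First, invoking the standard $U^p$--$V^p$ duality framework of Hadac--Herr--Koch (see \cite{HHK09, HTT11}), one has, for any interval $I$ and any $F \in L^1_t H^s$,
\begin{equation*}
\Big\Vert \int_0^t e^{i(t-\tau)\Box} F(\tau)\, d\tau \Big\Vert_{X^s(I)} \;\lesssim\; \sup_{\substack{v \in Y^{-s}(I)\\ \Vert v \Vert_{Y^{-s}(I)} \leq 1}} \Big| \int_I \int_{\mathcal{M}} F(t,\mathbf{x})\, \overline{v(t,\mathbf{x})}\, d\mathbf{x}\, dt \Big|.
\end{equation*}
I would take $F = \prod_{j=1}^{2k+1}\widetilde{u}_j$ and reduce matters to bounding this dual pairing.

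Next, I would apply Lemma~\ref{lemma1}. The statement there singles out $u_1$ as the distinguished factor in $Y^s(I)$, but since the product $\prod_{j}\widetilde{u}_j$ is symmetric in the factors, the estimate in fact holds for any choice of index $l \in \{1,\ldots,2k+1\}$ playing the role of the distinguished factor; that is,
\begin{equation*}
\Big| \int_I \int_{\mathcal{M}} \prod_{j=1}^{2k+1}\widetilde{u}_j\, \overline{v}\, d\mathbf{x}\, dt \Big| \;\lesssim\; \Vert v \Vert_{Y^{-s}(I)} \Vert u_l \Vert_{Y^s(I)} \prod_{\substack{j=1\\ j \neq l}}^{2k+1} \Vert u_j \Vert_{Y^{s_{2,k}}(I)}
\end{equation*}
for every such $l$. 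Majorizing by the sum over $l$, taking the supremum over $v$ with $\Vert v\Vert_{Y^{-s}(I)}\le 1$, and then using the embedding $X^\sigma \hookrightarrow Y^\sigma$ (for $\sigma = s$ and $\sigma = s_{2,k}$) from Remark~\ref{remark2.6} to replace $Y$-norms by $X$-norms on the right-hand side yields exactly \eqref{mainest}.

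The main obstacle in this argument is not in the present proposition but rather has already been overcome in Lemma~\ref{lemma1}, where the Littlewood--Paley decomposition, cube-orthogonality in spatial frequency, the delicate choice of mixed-norm H\"older exponents satisfying \eqref{pp_nmq2}--\eqref{necessarydelta}, and the global Strichartz estimates \eqref{StricartzwithCcubes}--\eqref{pnmsnm} are all assembled. The only point that genuinely requires care in the wrap-up is ensuring that the duality bound in the first step is stated in the correct form with $Y^{-s}$ test functions (as opposed to $V^2_\Box$), which is a routine consequence of the atomic/variational structure of $X^s$ and $Y^s$ and the disjointness of the cube partition $\{C_z\}$; I do not anticipate a substantive difficulty here.
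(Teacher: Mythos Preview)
Your proposal is correct and follows the same approach as the paper: the paper's proof simply cites the duality inequality
\[
\Big\Vert \int_0^te^{i(t-\tau)\Box} f(\tau)d\tau\Big\Vert_{X^s(I)} \lesssim \sup_{\Vert v\Vert_{Y^{-s}(I)}=1}\Big|\int_{I}\int_{\mathcal{M}}f\,\overline{v}\,d\mathbf{x}dt\Big|
\]
(referring to \cite{HTT11}) together with Lemma~\ref{lemma1}, which is exactly the combination you describe. Your additional remarks on symmetry in the index $l$ and the embedding $X^\sigma\hookrightarrow Y^\sigma$ are the routine details implicit in the paper's one-line argument.
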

\begin{proof}
    The Estimate \eqref{mainest} follows directly from 
    \begin{equation*} 
 \begin{aligned}
  \Big\Vert \int_0^te^{i(t-\tau)\Box} f(\tau)d\tau\Big\Vert_{X^s(I)} \lesssim \sup_{\Vert v\Vert_{Y^{-s}(I)}=1}\Big|\int_{I}\int_{\mathcal{M}}f(t,\mathbf{x})\overline{v(t,\mathbf{x})}d\mathbf{x}dt\Big|
 \end{aligned}
 \end{equation*}
    (for a proof we refer to \cite{HTT11}) and Lemma \ref{lemma1}.
\end{proof}
\begin{proof}[Proof of Theorem \ref{maintm}]
We follow the argument in \cite{HHK09, HTT11,KV14}. Assume that $k\geq 2$ is fixed and $I\subset \mathbb{R}_{\geq 0}$ is a fixed time interval containing $0$. Let $\delta>0$ and $\epsilon>0$ to be determined later. For convenience let us write $F(u)=|u|^{2k}u$ and denote
\begin{align*}
\mathcal{I}[F(u)](t):=\int_0^te^{i(t-\tau)\Box}F(u)(\tau)\,d\tau.
\end{align*}
Then, given $u_0\in H^{s_{2,k}}(\mathbb{R}\times \mathbb{T})$ with $\Vert u_0\Vert_{H^{s_{2,k}}(\mathbb{R}\times \mathbb{T})}\leq \epsilon$, the contraction argument is applied to the operator
\begin{align}\label{Gammaoperator}
\Gamma(u)(t):=e^{it\Box}u_0\pm i\mathcal{I}[F(u)](t),\quad t\in I\setminus\{0\},
\end{align}
on the ball
\begin{align*}
    B_{\delta}:=\{u\in X^{s_{2,k}}(I)\cap C_t(I;H^{s_{2,k}}_{x,y}(\mathbb{R}\times \mathbb{T})): \Vert u\Vert_{X^{s_{2,k}}(I)}\leq \delta\}.
\end{align*}
 Thus, for $u\in B_{\delta}$, by Lemma \ref{linearestlemma} and Proposition \ref{keyproposition}, we obtain
\begin{align*}
    \Vert \Gamma(u)\Vert_{X^{s_{2,k}}(I)}&\leq \Vert e^{it\Box}u_0\Vert_{X^{s_{2,k}}(I)}+ \Big\Vert \mathcal{I}[F(u)]\Big\Vert_{X^{s_{2,k}}(I)}\\&\leq \Vert u_0\Vert_{H^{s_{2,k}}(\mathbb{R}\times \mathbb{T})}+C\Vert u\Vert_{X^{s_{2,k}}(I)}^{2k+1}\\&\leq \epsilon+C\delta^{2k+1}\leq \delta
\end{align*}
provided that we pick $\epsilon=\frac{\delta}{2}$ and $\delta=(2C)^{-\frac{1}{2k}}$. This shows that $\Gamma$ maps $B_{\delta}$ into itself. Moreover, we execute the same argument as above for $u, v\in B_{\delta}$ by implementing the estimate \eqref{mainest} (with $\delta$ as above) to obtain 
\begin{align*}
    \Vert \Gamma(u)-\Gamma(v)\Vert_{X^{s_{2,k}}(I)}&\leq C(\Vert u\Vert_{X^{s_{2,k}}(I)}^{2k}+\Vert v\Vert_{X^{s_{2,k}}(I)}^{2k})\Vert u-v\Vert_{X^{s_{2,k}}(I)}\\&\leq \frac{1}{2}\Vert u-v\Vert_{X^{s_{2,k}}(I)}
\end{align*}
which proves that $\Gamma$ is a contraction, in particular setting $I=\mathbb{R}_{\geq 0}$, implies the existence of global-in-time solutions to \eqref{eq:HNLS} for small data $u_0\in H^{s_{2,k}}(\mathbb{R}\times \mathbb{T})$. Furthermore, due to the time reversibility of the HNLS equation, the argument carried out above can be extended to the negative axis $I=\mathbb{R}_{\leq 0}$. 

As a consequence of global existence, next, we discuss the scattering for global solutions to \eqref{eq:HNLS} with small initial data. We address such dynamics as $t\rightarrow\infty$ only. By time reversibility, the same argument also holds when $t\rightarrow-\infty$. To this end, it suffices to show that for $u\in X^{s_{2,k}}([0,\infty))$ the following limit exists in $H^{s_{2,k}}(\mathbb{R}\times \mathbb{T})$:
\begin{align}\label{limitincriticalspace}
    \lim_{t\rightarrow \infty}e^{-it\Box}u(t)=u_+\in H^{s_{2,k}}(\mathbb{R}\times \mathbb{T}).
\end{align}
 The argument is fairly straightforward. Indeed, for $u\in X^{s_{2,k}}([0,\infty))$, and any $t_1, t_2>0$ with  $t_1>t_2$, thanks to Proposition \ref{keyproposition}, there exists some $C>0$ such that \begin{align*}
     \Vert u(t_1)-e^{i(t_1-t_2)\Box}u(t_2)\Vert_{H^{s_{2,k}}}=\Vert \mathcal{I}[F(u)](t_1)-\mathcal{I}[F(u)](t_2)\Vert_{H^{s_{2,k}}}\lesssim \Vert u\Vert_{X^{s_{2,k}}((t_1,t_2))}^{2k+1}\leq C
 \end{align*}
 as $k$ is fixed. By this means, as $t_1, t_2\rightarrow\infty$, we conclude that
 \begin{align*}
     \Vert e^{-it_1\Box}u(t_1)-e^{-it_2\Box}u(t_2)\Vert_{H^{s_{2,k}}}=\Vert \mathcal{I}[F(u)](t_1)-\mathcal{I}[F(u)](t_2)\Vert_{H^{s_{2,k}}}\rightarrow 0.
 \end{align*}
Hence, by completeness of $H^{s_{2,k}}$, this implies \eqref{limitincriticalspace}.

Concerning the large initial data, we only consider the case $k\geq 2$, the cubic nonlinearity case $k=1$ follows in a similar way. Thus, in this case, assume that $k$ is fixed. We shall set a time interval $I_T=[0,T]$, $T\leq 1$ (to be determined later). Below we take advantage of the following estimate:
\begin{align}\label{differenceest}
 \Big\Vert \mathcal{I}[F(u+v)-F(u)]\Big\Vert_{X^{s_{2,k}}(I_T)} \lesssim \Vert v\Vert_{X^{s_{2,k}}(I_T)}(\Vert u\Vert_{X^{s_{2,k}}(I_T)}+\Vert v\Vert_{X^{s_{2,k}}(I_T)})^{2k}
\end{align}
which follows from Proposition \ref{keyproposition} after expanding the expression inside the integral. Next, we assume that $u_0\in H^{s_{2,k}}(\mathbb{R}\times\mathbb{Z})$ with $\Vert u_0\Vert_{H^{s_{2,k}}(\mathbb{R}\times\mathbb{Z})}\leq C_0$ for some $0<C_0<\infty$. For a given $N\geq 1$, assume further that $\Vert P_{>N}u_0\Vert_{H^{s_{2,k}}(\mathbb{R}\times\mathbb{Z})}\leq \varepsilon$,
where $P_{>N}f=(Id-P_{\leq N})f$ and $\varepsilon>0$ is a small number to be specified later. In the light of these assumptions, let us define the ball $$B_{\varepsilon,C_0}:=\{u\in X^{s_{2,k}}(I_T)\cap C_t(I_T;H^{s_{2,k}}_{x,y}(\mathbb{R}\times \mathbb{T})): \Vert P_{>N}u\Vert_{X^{s_{2,k}}(I_T)}\leq 2\varepsilon,\,\Vert u\Vert_{X^{s_{2,k}}(I_T)}\leq 2C_0\},$$
on which the operator $\Gamma$ in \eqref{Gammaoperator} will be shown to be a contraction. To this end, let $u\in B_{\varepsilon,C_0}$. By Lemma \ref{linearestlemma}, the estimate \eqref{differenceest}, and Bernstein inequality we obtain \begin{align*}
    \Vert \Gamma(u)\Vert_{X^{s_{2,k}}}&\leq  \Vert e^{it\Box}u_0\Vert_{X^{s_{2,k}}}+\Big\Vert \mathcal{I}[F(P_{\leq N}u)]\Big\Vert_{X^{s_{2,k}}}+\Big\Vert \mathcal{I}[F(u)-F(P_{\leq N}u)]\Big\Vert_{X^{s_{2,k}}}\\&\leq \Vert u_0\Vert_{H^{s_{2,k}}}+C\Vert F(P_{\leq N}u)\Vert_{L^1_tH^{s_{2,k}}_{\mathbf{x}}}+C\Vert P_{>N}u\Vert_{X^{s_{2,k}}}\Vert u\Vert_{X^{s_{2,k}}}^{2k}\\&\leq C_0+CT\Vert P_{\leq N}u\Vert_{L^{\infty}_{t,\mathbf{x}}}^{2k}\Vert P_{\leq N}u\Vert_{L^{\infty}_tH^{s_{2,k}}_\mathbf{x}}+C(2\varepsilon)(2C_0)^{2k}\\&\leq C_0+CTN^2(2C_0)^{2k+1}+C(2\varepsilon)(2C_0)^{2k}\\&\leq 2C_0
\end{align*}
that holds provided that we choose $\varepsilon=\varepsilon(C_0)$ and $T=T(N,C_0)$ sufficiently small. To show the boundedness of $\Gamma$ with respect to the first condition in the definition of the ball $B_{\varepsilon, C_0}$, we split
\begin{align*}
    F(u)=F_1(u)+F_2(u),\quad  F_1(u):=O([P_{>N}u]^2u^{2k-1})\,\,\text{and}\,\,F_2(u):=O([P_{\leq N}u]^{2k}u)
\end{align*}
where the notation $O$ is used to indicate all the terms sharing common factors as stated above, indeed, by writing $u=P_{\leq N}u+P_{> N}u$, we see that $F_1$ and $F_2$ actually consist of combination of terms having factors with complex conjugates and projections $P_{> N}$ or $P_{\leq N}$. Using Lemma \ref{linearestlemma}, Proposition \ref{keyproposition}, Bernstein inequality and the embedding $H^{s_{2,k}}\hookrightarrow L^{2k}$, we obtain
\begin{equation*}
\begin{aligned}
\Vert P_{>N}\Gamma(u)\Vert_{X^{s_{2,k}}}&\leq  \Vert e^{it\Box}P_{>N}u_0\Vert_{X^{s_{2,k}}}+\Big\Vert \mathcal{I}[F_1(u)]\Big\Vert_{X^{s_{2,k}}}+\Big\Vert \mathcal{I}[F_2(u)]\Big\Vert_{X^{s_{2,k}}}\\&\leq \Vert P_{>N}u_0\Vert_{H^{s_{2,k}}}+C\Vert P_{>N}u\Vert_{X^{s_{2,k}}}^2\Vert u\Vert_{X^{s_{2,k}}}^{2k-1}+C\Vert F_2(u)\Vert_{L^1_tH^{s_{2,k}}_{\mathbf{x}}}\\&\leq \varepsilon+C(2\varepsilon)^2(2C_0)^{2k-1}+CT\Vert F_2(u)\Vert_{L^{\infty}_tH^{s_{2,k}}_{\mathbf{x}}}\\&\leq \varepsilon+C(2\varepsilon)^2(2C_0)^{2k-1}\\&\quad+CT\Big[\Vert \langle \nabla \rangle^{s_{2,k}}u\Vert_{L^{\infty}_{t}L^2_{\mathbf{x}}}\Vert P_{\leq N}u\Vert_{L^{\infty}_{t,\mathbf{x}}}^{2k}+N^{s_{2,k}}\Vert u\Vert_{L^{\infty}_tL^{2k}_{\mathbf{x}}}\Vert P_{\leq N}u\Vert_{L^{\infty}_tL^{\frac{4k^2}{k-1}}_{\mathbf{x}}}^{2k}\Big]\\&\leq  \varepsilon+C(2\varepsilon)^2(2C_0)^{2k-1}+CTN^2(2C_0)^{2k+1}\leq 2\varepsilon
\end{aligned}
\end{equation*}
as long as $\varepsilon=\varepsilon(C_0)$ and $T=T(\varepsilon,N,C_0)$ are chosen sufficiently small depending on the indicated parameters. To show that $\Gamma$ is a contraction on $B_{\varepsilon, C_0}$, one needs to deal with the difference $F(u)-F(v)$, which can be estimated by splitting $F$ analogously, so we skip it; concerning the estimate for the difference $F(u)-F(v)$ and uniqueness, see \cite{HHK09, HTT11, KV14}.
\end{proof}

\begin{ackno}\rm
The authors gratefully acknowledge Prof. Yoshio Tsutsumi for his outstanding contributions to the field of dispersive equations, which continue to serve as a profound source of inspiration. Also, we thank the anonymous referees for their detailed suggestions, in particular, for kindly pointing out an error in Theorem 1.8 of the first preprint version of this paper. E.B. and Y.W.~were supported by the EPSRC New Investigator Award 
 (grant no.~EP/V003178/1).
C.S and N.T. were partially supported by the ANR project Smooth ANR-22-CE40-0017.
\end{ackno}
\noindent
\textbf{Declarations}

\noindent
\textbf{Data Availability.} No datasets were generated or analyzed during the current study.

\noindent
\textbf{Competing Interests:} The authors declare that they have no competing interests.

\end{document}